\NeedsTeXFormat{LaTeX2e} 
 
\documentclass[envcountsame,10pt,a4paper]{article}
\usepackage{latexsym,amsmath} 
\usepackage{times}
\usepackage{psfrag}

\def\vec#1{\mathbf{#1}}

\usepackage[dvips]{epsfig} 
\graphicspath{{./Fig_eps/}{./Eps/}} 
\DeclareGraphicsExtensions{.ps,.eps} 
\usepackage{color} 
\usepackage{a4wide}


\newcommand\Walksat{{\tt Walksat}}

\newcommand\sign{\mathrm{sign}}

\newcommand\PHI{\vec\Phi}

\newcommand\cA{\mathcal{A}}

\newcommand\cD{\mathcal{D}} 
\newcommand\cF{\mathcal{F}} 
 
\newcommand\cE{\mathcal{E}} 
 
\newcommand\cN{\mathcal{N}} 
\newcommand\cQ{\mathcal{Q}}

\newcommand\cI{\mathcal{I}}

\newcommand\cY{\mathcal{Y}}

\newcommand\cZ{\mathcal{Z}}

\def\cE{{\cal E}}

\newcommand\eul{\mathrm{e}} 
\newcommand\eps{\varepsilon}

\newcommand\Erw{\mathrm{E}} 
\newcommand\pr{\mathrm{P}}

\newcommand{\vecone}{\vec{1}}

\newcommand{\Bin}{{\rm Bin}}

\newcommand{\bink}[2] {{{#1}\choose {#2}}}

\newcommand\ra{\rightarrow} 

\newcommand\bc[1]{\left({#1}\right)} 
\newcommand\cbc[1]{\left\{{#1}\right\}} 
\newcommand\bcfr[2]{\bc{\frac{#1}{#2}}} 
 
\newcommand\brk[1]{\left\lbrack{#1}\right\rbrack}

\newcommand\abs[1]{\left|{#1}\right|}

\newcommand\RR{\mathbf{R}} 
 
\newcommand{\Whp}{W.h.p.} 
\newcommand{\whp}{w.h.p.} 
\newcommand{\stacksign}[2]{{\stackrel{\mbox{\scriptsize #1}}{#2}}}

\newcommand\Lem{Lemma}
\newcommand\Prop{Proposition}
\newcommand\Thm{Theorem}
\newcommand\Cor{Corollary}
\newcommand\Sec{Section}

\newcommand\algstyle{}

\newtheorem{definition}{Definition}[section]
\newtheorem{example}[definition]{Example}

\newtheorem{theorem}[definition]{Theorem}
\newtheorem{lemma}[definition]{Lemma}
\newtheorem{proposition}[definition]{Proposition}
\newtheorem{corollary}[definition]{Corollary}

\newtheorem{algorithm}[definition]{Algorithm}
\newtheorem{fact}[definition]{Fact}

\newcommand{\qed}{\hfill$\Box$\smallskip}
\newenvironment{proof}{\emph{Proof.}}{}

\begin{document} 

\title{\bf Analyzing Walksat on Random Formulas\thanks{An extended abstract version of this work appeared in the proceedings of ANALCO 2012.}}
 
\author{
Amin Coja-Oghlan\thanks{Supported by EPSRC grant EP/G039070/1.
		Goethe University, Mathematics Institute, 10 Robert Mayer St, Frankfurt 60325, Germany, {\tt acoghlan@math.uni-frankfurt.de}}
\and
Alan Frieze\thanks{
Carnegie Mellon University, Department of Mathematical Sciences, Pittsburgh, PA~15213, USA, {\tt alan@random.math.cmu.edu}}
} 
\date{\today}


\maketitle 

\begin{abstract}
Let $\PHI$ be a uniformly distributed random $k$-SAT formula with $n$ variables and $m$ clauses.
We prove that the \Walksat\ algorithm from
Papadimitriou (FOCS~1991)/Sch\"oning (FOCS 1999)
finds a satisfying assignment of $\PHI$ in polynomial time \whp\
if $m/n\leq \rho\cdot 2^k/k$ for a certain constant $\rho>0$.
This is an improvement by a factor of $\Theta(k)$ over the best previous analysis of \Walksat\ from
	Coja-Oghlan, Feige, Frieze, Krivelevich, Vilenchik (SODA~2009).

\emph{Key words:}	random structures, phase transitions, $k$-SAT, local search algorithms.
\end{abstract}

\section{Introduction}

Let $\PHI=\PHI_k(n,m)$ be a $k$-CNF on $n$ Boolean variables $x_1,\ldots,x_n$ with $m$ clauses
chosen uniformly at random ($k\geq3$).
The interest in random $k$-SAT stems largely from the \emph{experimental} observation
that for certain densities $r$ the random formula $\PHI$ is a challenging algorithmic benchmark~\cite{Cheeseman,MitchellSelmanLevesque}.
However, \emph{analyzing} algorithms on random formulas is notoriously difficult.
Indeed, the current rigorous results for random $k$-SAT mostly deal with algorithms that are extremely
simple both to state and to analyze, or with algorithms that were specifically designed so as to allow for a rigorous analysis.
More precisely, the present analysis techniques are essentially confined to simple  algorithms that 
aim to construct a satisfying assignment by determining the value of one variable at a time \emph{for good},
without any backtracking or reassigning variables at a later time.
By contrast, most `real-life' satisfiability algorithms actually
rely substantially on reassigning variables. 

Maybe the simplest example of a natural 
algorithm that eludes the standard analysis techniques is 
{\tt Walksat}~\cite{Papadimitriou,Schoning}.
Similar local search algorithms are quite successful in practical SAT-solving~\cite{SelmanKautzCohen}.
Starting from the all-true assignment, \Walksat\ tries to find
a satisfying assignment of its input $k$-CNF formula $\Phi=\Phi_1\wedge\cdots\wedge\Phi_m$ as follows.
If the current assignment $\sigma$ is satisfying, then clearly there is nothing to do and the algorithm terminates.
Otherwise, the algorithm picks an index $i$ such that clause $\Phi_i$ is unsatisfied uniformly at random among all such indices.
Clause $\Phi_i$ is a disjunction of $k$ literals $\Phi_{i1}\vee\cdots\vee\Phi_{ik}$.
\Walksat\ picks an index $j\in\cbc{1,\ldots,k}$ uniformly at random and flips the value assigned to the variable 
 underlying the literal $\Phi_{ij}$.
Of course, this ensures that under the new assignment clause $\Phi_i$ is satisfied, but flipping $\Phi_{ij}$
may create new unsatisfied clauses.
If after a certain number $T_{\max}$ of iterations  no satisfying assignment is found, \Walksat\ gives
up and concedes failure.
The pseudocode is shown in Figure~\ref{Fig_Walksat}.
In the worst case, it can be shown that $(2-2/k)^{(1+o(1))n}$ executions of \Walksat\ with independent coins tosses will find a satisfying assignment of a satisfiable
input formula $\Phi$ on
$n$ variables with probability $1-o(1)$,
	for a suitable $T_{\max}=T_{\max}(k)=O(n)$~\cite{Schoning}.

\begin{figure}
\noindent{
\begin{algorithm}\label{Alg_Walksat}\upshape\texttt{Walksat$(\Phi,T_{\max})$}\\\sloppy
\emph{Input:} A $k$-CNF $\Phi=\Phi_1\wedge\cdots\wedge\Phi_m$ over the variables $x_1,\ldots,x_n$
		and a number $T_{\max}\geq0$.\\
\emph{Output:} An assignment $\sigma:V\rightarrow\cbc{0,1}$.
\vspace{-2mm}
\begin{tabbing}
mmm\=mm\=mm\=mm\=mm\=mm\=mm\=mm\=mm\=\kill
{\algstyle 0.}	\> \parbox[t]{30em}{\algstyle
	Initially, let $\sigma(x_i)=1$ for $i=1,\ldots,n$.}\\
{\algstyle 1.}	\> \parbox[t]{30em}{\algstyle
	Repeat the following $T_{\max}$ times (with independent random choices)}\\
{\algstyle 2.}	\> \> \parbox[t]{40em}{\algstyle
		If $\sigma$ is a satisfying assignment, then halt and output $\sigma$.}\\
{\algstyle 3.}	\> \> \parbox[t]{40em}{\algstyle		Otherwise, choose an index $i$ such that clause $\Phi_i$ is unsatisfied under $\sigma$ uniformly at random.}\\
{\algstyle 4.}	\> \> \parbox[t]{38em}{\algstyle
		Suppose that $\Phi_i=\Phi_{i1}\vee\cdots\vee\Phi_{ik}$.\\
		Choose an index $j\in\cbc{1,\ldots,k}$ uniformly at random.\\
			Flip the value of the variable underlying the literal $\Phi_{ij}$ in the assignment $\sigma$.}\\
{\algstyle 5.}	\> \parbox[t]{30em}{\algstyle
	Return `failure'.}
\end{tabbing}
\end{algorithm}}
\caption{The \Walksat\ algorithm.}\label{Fig_Walksat}
\end{figure}

Although \Walksat\ is conceptually very simple, analyzing this algorithm on random formulas is a challenge.
Indeed, \Walksat\ does not follow the naive template of the previously analysed algorithms that assign one variable
at a time for good, because its random choices may (and will) lead \Walksat\ to
flipping quite a few variables several times over.
This causes stochastic dependencies that seem to render the differential equation method, the mainstay of the previous analyses
of random $k$-SAT algorithms, useless.
The goal of the present paper is to present an analysis of \Walksat\ via a different approach
that allows us to deal with the stochastic dependencies.
Our main result is as follows.

\begin{theorem}\label{Thm_pos}
There is a constant $k_0>3$ such that for any $k\geq k_0$
and $$0<m/n\leq\frac1{25}\cdot 2^k/k,$$
\Walksat$(\PHI,\lceil n/k\rceil )$ outputs a satisfying assignment \whp
\end{theorem}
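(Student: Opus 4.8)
The plan is to track the number $U_t$ of clauses unsatisfied after $t$ steps of \Walksat, and to show that, started from the all-true assignment, this quantity performs a biased random walk that hits $0$ within $n$ steps \whp\ By the union bound over the $\leq 2^k\binom nk$ possible clauses, \whp\ every variable occurs in $O(2^k/k\cdot k)=O(2^k)$ clauses, and more importantly the formula is locally sparse, so that the clauses containing any fixed variable are almost all on otherwise disjoint variable sets. First I would set up the right potential: rather than $U_t$ itself, work with a weighted count in which a clause unsatisfied under $\sigma_t$ contributes roughly $1$, while a clause that is ``dangerously close'' to becoming unsatisfied (satisfied by exactly one literal whose variable was recently flipped) contributes a small fraction. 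The point of the weighting is to absorb the bursts of newly created unsatisfied clauses caused by a single flip.

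The core estimate is the drift of this potential in one step. When \Walksat\ flips variable $|\Phi_{ij}|$ for a uniformly random unsatisfied clause $\Phi_i$ and a uniformly random position $j$, it destroys one unsatisfied clause for sure; the damage is the number of previously satisfied clauses that now become unsatisfied, i.e. clauses in which $|\Phi_{ij}|$ was the unique satisfying literal and appeared with the old sign. The key probabilistic input is that, conditioning on the history, the literal $\Phi_{ij}$ that gets flipped is, up to lower-order corrections, a \emph{uniformly random literal} — this is where one exploits that the clause $\Phi_i$ was a uniform random clause and that, in the sparse regime $r\le \frac1{25}2^k/k$, the unsatisfied clauses at any time do not cluster. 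Under a uniform random literal, the expected number of clauses for which it is the unique support is about $r k\cdot 2^{-(k-1)}\cdot(\text{fraction of clauses with exactly one true literal})$; since $rk\le \frac1{25}2^k$, this expectation is a small constant $<1$, so the potential has negative drift bounded away from $0$. One must also control the conditional variance so as to apply an optional-stopping / Azuma-type argument to the supermartingale $U_t + ct$, concluding that the walk of length $n$ reaches $0$ \whp

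The main obstacle is justifying the ``uniform random literal'' heuristic rigorously in the face of the stochastic dependencies the paper emphasizes: \Walksat\ revisits variables, so the set of currently-true literals, and hence the conditional distribution of which clause is unsatisfied and which literal is flipped, is correlated with the entire trajectory. I expect the real work to be a coupling or a ``deferred decisions'' exposure of the formula's randomness, showing that at each step the relevant conditional distribution is within $o(1)$ total variation of the idealized one, uniformly over the $\le n$ steps and over the \whp-typical formulas. A secondary technical point is handling the start: the all-true assignment leaves about $2^{-k}m$ clauses unsatisfied, and one must check the potential starts at $O(2^{-k}m)=O(n/(25k))$ so that $n$ steps of constant negative drift suffice with room to spare. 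Once the drift and concentration are in hand, the conclusion that \Walksat$(\PHI,n)$ succeeds \whp\ is immediate.
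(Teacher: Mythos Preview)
Your sketch identifies the right first-step intuition (the expected number of newly violated clauses per flip is $\sim km/(n2^k)\sim\rho<1$), and you correctly flag that the obstacle is justifying the ``uniform random literal'' heuristic in the presence of re-flips. But that is precisely where the proposal has a genuine gap, and the gap is not a technicality: the total-variation closeness you hope for does \emph{not} hold. Once a variable $x$ has been flipped, all clauses containing $x$ are exposed, and the event that $x$ is selected again is correlated with that exposure (indeed, $x$ gets re-flipped exactly because it was the unique support of some now-unsatisfied clause, which biases its neighborhood). A constant fraction of the $\le n$ flips can be re-flips, so the accumulated conditioning is not $o(1)$; a per-step damage bound of the type you sketch cannot be rescued by a naive coupling or deferred-decisions argument, and your weighted potential does nothing to address this because the weights still require controlling the conditional law of the flipped literal.

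The paper's proof takes a substantially different route that avoids per-step damage bounds altogether. Rather than $U_t$, it tracks $S_t+H_t$, where $S_t=|\cD_t|-|\cA_t|$ with $\cD_t$ the (monotone increasing) set of clauses all of whose positive literals lie in the set $\cA_t\cup\cN_t$ of ``touched'' variables, and $\cA_t$ the set of variables flipped at least once (minus a small exceptional set). The key point is that $|\cD_t|$ is bounded \emph{globally}: one shows $|\cD_t|\le 2^{2-k}m$ for all $t\le T^*$ \whp\ (\Prop~\ref{Prop_D}) by a first-moment argument over the entire trajectory, not step by step. Meanwhile $|\cA_t|$ grows by one whenever a \emph{fresh} variable is flipped, which happens with probability $>0.51$ per step because any unsatisfied clause outside a small exceptional set $\cZ_t$ has fewer than $0.49k$ already-touched positions. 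The set $\cZ_t$ (clauses with many touched variables, plus a recursive closure) is shown to have size $\le\eps n$ (\Prop~\ref{Prop_Z}) and is handled separately via a ``rich assignment'' $\tau_t$, giving the $H_t$ term. In short, the paper decouples progress (growth of $|\cA_t|$, a clean supermartingale) from damage (the global bound on $|\cD_t|$), whereas your plan requires bounding the net damage at each step, which is exactly what the dependencies obstruct.
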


\subsubsection{Related work.}
To put \Thm~\ref{Thm_pos} in perspective, let us compare it with other results on random $k$-SAT algorithms.
The simplest conceivable one is presumably {\tt UnitClause}.
Considering all variables unassigned initially, {\tt UnitClause} sets one variable at a time as follows.
If there is a clause in which $k-1$ variables have been assigned already without satisfying that clause (a `unit clause'),
the algorithm has to assign the $k$th variable so as to satisfy the unit clause.
If there is no unit clause, a currently unassigned variable is chosen randomly and is assigned a random truth value.
As {\tt UnitClause} is extremely simple and does not backtrack, it can be analyzed via the method of
differential equations~\cite{AchDiffEq}.
The result is that {\tt UnitClause} finds a satisfying assignment with a non-vanishing probability
so long as $m/n<(1-o_k(1))\frac{\eul}2\cdot 2^k/k$, where $o_k(1)$ hides a term that tends to $0$ as $k$ gets large~\cite{ChaoFranco2}.
Furthermore, {\tt ShortestClause}, a natural generalization of {\tt UnitClause}, succeeds for
	$m/n<(1-o_k(1))\eul^2/8\cdot 2^k/k$
with \emph{high} probability~\cite{mick}.
Indeed, the algorithm can be modified so as to succeed with high probability even
for $m/n<(1.817-o_k(1))\cdot 2^k/k$ by allowing a \emph{very} limited amount of backtracking~\cite{FrSu}.
Finally, the algorithm {\tt Fix} from~\cite{BetterAlg}, which was specifically designed
for solving random $k$-SAT instances, succeeds up to $m/n<(1-o_k(1))2^k\ln(k)/k$.
By comparison, non-constructive arguments show that 
the threshold for the \emph{existence} of a satisfying assignment
is $(1+o_k(1))\cdot2^k\ln2$~\cite{nae,yuval}.

In summary, \Thm~\ref{Thm_pos} shows that \Walksat\ is broadly competitive with the other known algorithms for random $k$-SAT.
That said, the main point of this paper is not to produce a better algorithmic
bound for random $k$-SAT, but to address the methodological challenge of
analyzing algorithms such
as \Walksat\ that may reassign variables.
This difficult aspect did not occur or was sidestepped in the aforementioned previous analyses~\cite{AchDiffEq,mick,BetterAlg,FrSu}.
Indeed, the lack of techniques for such analyses is arguably one of the most important shortcomings
of the current theory of random discrete structures.

\Thm~\ref{Thm_pos} improves substantially on the previous analyses of \Walksat, at least for general $k$.
The best previous result for this case showed that \whp\ \Walksat\ will find
a satisfying assignment with $T_{\max}=n$ if $m/n<\rho'\cdot2^k/k^2$,
for a certain constant $\rho'>0$~\cite{CFFKV}.
The proof of this result is based on a rather simple observation that allows to sidestep
the analysis of the stochastic dependencies that arise in the execution of \Walksat.
However, it is not difficult to see that this argument is confined to
clause/variable densities $m/n<2^k/k^2$.
\Thm~\ref{Thm_pos} improves this result by a factor of $\Theta(k)$.

Furthermore, the techniques of Alekhnovich and Ben-Sasson~\cite{BenSassonAlke} show that
for any $k$ \Walksat\ will \whp\ find a satisfying assignment within $O(n)$ iterations
if $m/n<r_{k-pure}$, where $r_{k-pure}$ is the `pure literal threshold'.
The analysis in~\cite{BenSassonAlke} depends heavily on the fact that the combinatorial
structure of the hypergraph underlying the random $k$-CNF $\PHI$ is extremely simple
for $m/n<r_{k-pure}$.
Furthermore, because $r_{k-pure}\ra0$ in the limit of large $k$~\cite{Molloy}, this result is quite weak for general $k$.
Yet \cite{BenSassonAlke} remains the best known result for `small' $k$.
For instance, in the case $k=3$ the pure literal bound is $r_{3-pure}\approx1.63$~\cite{Broder}.

Monasson and Semerjian~\cite{Monasson} applied non-rigorous techniques from statistical mechanics
to study the \Walksat\ algorithm on random formulas.
Their work suggests that \Walksat$(\PHI,O(n))$ will find a satisfying assignment \whp\ if
	$m/n<(1-o_k(1))2^k/k$.
\Thm~\ref{Thm_pos} confirms this claim, up to the constant factor $1/25$.

In contrast to the previous `indirect' attempts at analyzing \Walksat\ on random formulas~\cite{BenSassonAlke,CFFKV},
in the present paper we develop a technique for tracing the execution of the algorithm directly.
This allows us to keep track of the arising stochastic dependencies explicitly.
Before we outline our analysis, we need some notation and preliminaries.

\section{Preliminaries}\label{Sec_pre}

We let $\Omega_k(n,m)$ be the set of all $k$-SAT formulas
with variables from $V=\{x_1,\ldots,x_n\}$ that contain exactly $m$ clauses.
To be precise, we consider each formula an ordered $m$-tuple of clauses
and each clause  an ordered $k$-tuple of literals, allowing both
literals to occur repeatedly in one clause and clauses to occur repeatedly in the formula.
Thus, $\abs{\Omega_k(n,m)}=(2n)^{km}$.
Let $\Sigma_k(n,m)$ be the power set of $\Omega_k(n,m)$,
and let $\pr=\pr_k(n,m)$ be the uniform probability measure.
Throughout, we assume that $m=\lceil rn\rceil$
for a fixed number $r>0$, the {\em density}.

As indicated above, we denote a uniformly random element of $\Omega_k(n,m)$ by $\PHI$.
In addition, we use the symbol $\Phi$ to denote specific (i.e., non-random) elements of $\Omega_k(n,m)$.
If $\Phi\in\Omega_k(n,m)$, then $\Phi_i$ denotes the $i$th clause of $\Phi$, and
$\Phi_{ij}$ denotes the $j$th literal of $\Phi_i$.
If $Z\subset\brk m$ is a set of indices, then we let $\Phi_Z=\bigwedge_{i\in Z}\Phi_i$.
If $l\in\cbc{x_1,\bar x_1,\ldots,x_n,\bar x_n}$ is a literal, then we denote its underlying variable by $|l|$.
Furthermore, we define $\sign(l)=-1$ if $l$ is a negative literal, and $\sign(l)=1$ if $l$ is positive.

Recall that a \emph{filtration} is a sequence $(\cF_t)_{0\leq t\leq \tau}$ of $\sigma$-algebras
$\cF_t\subset\Sigma_k(n,m)$ such that $\cF_t\subset\cF_{t+1}$ for all $0\leq t<\tau$.
For a random variable $X:\Omega_k(n,m)\rightarrow\RR$ we let $\Erw\brk{X|\cF_t}$ denote
the \emph{conditional expectation}.
Thus,
	$\Erw\brk{X|\cF_t}:\Omega_k(n,m)\rightarrow\RR$ is a $\cF_t$-measurable
	random variable such that for any $A\in\cF_t$ we have
	$$\sum_{\Phi\in A}\Erw\brk{X|\cF_t}(\Phi)=\sum_{\Phi\in A}X(\Phi).$$
Also remember that $\pr\brk{\cdot|\cF_t}$ assigns a probability measure
	$\pr\brk{\cdot|\cF_t}(\Phi)$ to any $\Phi\in\Omega_k(n,m)$,
namely
	$$\pr\brk{\cdot|\cF_t}(\Phi):A\in\Sigma_k(n,m)\mapsto\Erw\brk{\vecone_A|\cF_t}(\Phi),$$
where $\vecone_A$
 is the indicator of the event $A$.
We need the following well-known bound.

\begin{lemma}\label{Lemma_filt}
Let $(\cF_t)_{0\leq t\leq \tau}$ be a filtration 
and let $(X_t)_{1\leq t\leq \tau}$ be a sequence of non-negative random variables such that each $X_t$ is $\cF_t$-measurable.
Assume that there are numbers $\xi_t\geq0$ such that
	$\Erw\brk{X_t|\cF_{t-1}}\leq\xi_t$ for all $1\leq t\leq\tau$.
Then $\Erw[\prod_{1\leq t\leq \tau}X_t|\cF_0]\leq\prod_{1\leq t\leq \tau}\xi_t$.
\end{lemma}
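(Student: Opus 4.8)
The plan is to prove this by induction on $\tau$, peeling off the last factor $X_\tau$ via the tower property of conditional expectation. Concretely, I would argue as follows. Fix the filtration and the sequence $(X_t)$. The base case $\tau=0$ is the empty product, which equals $1\leq 1$ trivially (or one can take $\tau=1$ as the base case, where the claim is exactly the hypothesis $\Erw[X_1\mid\cF_0]\leq\xi_1$). For the inductive step, suppose the statement holds for sequences of length $\tau-1$. Write
\begin{equation*}
\prod_{1\leq t\leq\tau}X_t=\left(\prod_{1\leq t\leq\tau-1}X_t\right)\cdot X_\tau.
\end{equation*}
Since $\prod_{1\leq t\leq\tau-1}X_t$ is $\cF_{\tau-1}$-measurable (each $X_t$ for $t\leq\tau-1$ is $\cF_t$-measurable, hence $\cF_{\tau-1}$-measurable, and products of measurable functions are measurable), and since all the $X_t$ are non-negative, I can pull it out of the conditional expectation given $\cF_{\tau-1}$:
\begin{equation*}
\Erw\!\left[\prod_{1\leq t\leq\tau}X_t\ \Big|\ \cF_{\tau-1}\right]
=\left(\prod_{1\leq t\leq\tau-1}X_t\right)\cdot\Erw\!\left[X_\tau\mid\cF_{\tau-1}\right]
\leq\xi_\tau\cdot\prod_{1\leq t\leq\tau-1}X_t,
\end{equation*}
using the hypothesis $\Erw[X_\tau\mid\cF_{\tau-1}]\leq\xi_\tau$ together with non-negativity of $\prod_{1\leq t\leq\tau-1}X_t$ to preserve the inequality.

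Now I apply the tower property, $\Erw[\,\cdot\mid\cF_0]=\Erw\big[\Erw[\,\cdot\mid\cF_{\tau-1}]\mid\cF_0\big]$ (valid since $\cF_0\subseteq\cF_{\tau-1}$), monotonicity of conditional expectation, and finally the induction hypothesis applied to the truncated sequence $(X_t)_{1\leq t\leq\tau-1}$ with bounds $(\xi_t)_{1\leq t\leq\tau-1}$:
\begin{align*}
\Erw\!\left[\prod_{1\leq t\leq\tau}X_t\ \Big|\ \cF_0\right]
&=\Erw\!\left[\Erw\!\left[\prod_{1\leq t\leq\tau}X_t\ \Big|\ \cF_{\tau-1}\right]\ \Big|\ \cF_0\right]\\
&\leq\Erw\!\left[\xi_\tau\cdot\prod_{1\leq t\leq\tau-1}X_t\ \Big|\ \cF_0\right]
=\xi_\tau\cdot\Erw\!\left[\prod_{1\leq t\leq\tau-1}X_t\ \Big|\ \cF_0\right]
\leq\xi_\tau\cdot\prod_{1\leq t\leq\tau-1}\xi_t=\prod_{1\leq t\leq\tau}\xi_t.
\end{align*}
This closes the induction and proves the lemma.

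The argument is essentially routine measure theory, so there is no deep obstacle; the only points requiring a little care are the two places where non-negativity of the $X_t$ is used — once to factor the $\cF_{\tau-1}$-measurable product out of the conditional expectation while keeping the inequality in the right direction, and once to preserve the inequality after multiplying by $\xi_\tau\geq0$ — and the verification that the relevant products are measurable with respect to the appropriate $\sigma$-algebras so that "pulling out known factors" is legitimate. Since the underlying probability space $\Omega_k(n,m)$ is finite, all integrability concerns are vacuous and the conditional expectations are the elementary finite-space objects defined in the preliminaries, so the manipulations above are unambiguous. One could alternatively phrase the whole proof non-inductively by writing $\Erw[\prod_{t}X_t\mid\cF_0]$ as a nested sequence of conditional expectations $\Erw[\,\cdots\Erw[\Erw[X_\tau\mid\cF_{\tau-1}]X_{\tau-1}\mid\cF_{\tau-2}]\cdots\mid\cF_0]$ and bounding from the inside out, but the induction is cleaner to write down.
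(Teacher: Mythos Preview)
Your proof is correct and is essentially the same argument as the paper's: both proceed by induction, pulling the $\cF_{\tau-1}$-measurable partial product $\prod_{t<\tau}X_t$ out of $\Erw[\,\cdot\mid\cF_{\tau-1}]$, bounding $\Erw[X_\tau\mid\cF_{\tau-1}]\leq\xi_\tau$, and then applying the tower property with $\cF_0$. The paper's write-up is terser (it sets $Y_s=\prod_{t\leq s}X_t$ and shows $\Erw[Y_s\mid\cF_0]\leq\xi_s\Erw[Y_{s-1}\mid\cF_0]$), but the content is identical.
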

\begin{proof}
For $1\leq s\leq\tau$ we let $Y_s=\prod_{t=1}^sX_t$.
Let $s>1$.
Since $Y_{s-1}$ is $\cF_{s-1}$-measurable, we obtain
	\begin{eqnarray*}
	\Erw\brk{Y_s|\cF_0}&=&\Erw\brk{Y_{s-1}X_s|\cF_0}
		=\Erw\brk{\Erw\brk{Y_{s-1}X_s|\cF_{s-1}}|\cF_0}=
			\Erw\brk{Y_{s-1}\Erw\brk{X_s|\cF_{s-1}}|\cF_0}
			\leq\xi_s\Erw\brk{Y_{s-1}|\cF_0},
	\end{eqnarray*}
whence the assertion follows by induction.
\qed\end{proof}

\noindent
We also need the following tail bound (``Azuma-Hoeffding'', e.g.~\cite[p.~37]{JLR}).

\begin{lemma}\label{Azuma}
Let $(M_t)_{0\leq t\leq\tau}$ be a super-martingale with respect to a 
filtration $(\cF_t)_{0\leq t\leq \tau}$ such that $M_0=0$.
Suppose that there exist numbers $c_t$ such that $|M_t-M_{t-1}|\leq c_t$ for all $1\leq t\leq\tau$.
Then  for any $\lambda>0$ we have
	$\pr\brk{M_\tau>\lambda}\leq\exp\brk{-\lambda^2/(2\sum_{t=1}^\tau c_t^2)}.$
\end{lemma}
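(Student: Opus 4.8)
The plan is to establish Lemma~\ref{Azuma} by the standard exponential-moment (Chernoff-type) argument. Write $M_\tau=\sum_{t=1}^\tau D_t$ with increments $D_t=M_t-M_{t-1}$; the martingale property gives $\Erw\brk{D_t|\cF_{t-1}}=0$, and by hypothesis $\abs{D_t}\leq c_t$. For a parameter $h>0$ to be fixed later, Markov's inequality applied to the non-negative random variable $\eul^{hM_\tau}$ yields $\pr\brk{M_\tau>\lambda}\leq\eul^{-h\lambda}\Erw\brk{\eul^{hM_\tau}}$, so it suffices to bound the moment generating function of $M_\tau$.

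The key step is to control $\Erw\brk{\eul^{hM_\tau}}$ by peeling off one increment at a time along the filtration. Since $M_{\tau-1}$ is $\cF_{\tau-1}$-measurable, the tower property gives $\Erw\brk{\eul^{hM_\tau}}=\Erw\brk{\eul^{hM_{\tau-1}}\Erw\brk{\eul^{hD_\tau}|\cF_{\tau-1}}}$. The crux is the pointwise bound $\Erw\brk{\eul^{hD_t}|\cF_{t-1}}\leq\eul^{h^2c_t^2/2}$, i.e.\ Hoeffding's lemma: if a conditionally mean-zero random variable $X$ satisfies $\abs X\leq c$, then convexity of $x\mapsto\eul^{hx}$ on $[-c,c]$ gives $\eul^{hX}\leq\frac{c-X}{2c}\eul^{-hc}+\frac{c+X}{2c}\eul^{hc}$, and taking conditional expectations annihilates the linear term, leaving $\cosh(hc)$, which is at most $\eul^{h^2c^2/2}$ by a termwise comparison of Taylor series. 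Iterating this peeling $\tau$ times yields $\Erw\brk{\eul^{hM_\tau}}\leq\exp\bc{\tfrac{h^2}{2}\sum_{t=1}^\tau c_t^2}$.

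Combining the two displayed bounds gives $\pr\brk{M_\tau>\lambda}\leq\exp\bc{-h\lambda+\tfrac{h^2}{2}\sum_{t=1}^\tau c_t^2}$, and the choice $h=\lambda/\sum_{t=1}^\tau c_t^2$ produces the one-sided estimate $\pr\brk{M_\tau>\lambda}\leq\exp\bc{-\lambda^2/\bc{2\sum_{t=1}^\tau c_t^2}}$. Applying the same argument to the martingale $(-M_t)_{0\leq t\leq\tau}$, which has the same increment bounds, controls $\pr\brk{M_\tau<-\lambda}$ identically, and a union bound over the two tail events gives the two-sided statement (the resulting constant factor $2$ is harmless here and is routinely absorbed into the bound). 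I expect the only genuine subtlety to be Hoeffding's lemma itself — the convexity inequality together with the elementary estimate $\cosh(x)\leq\eul^{x^2/2}$ — and the measure-theoretic bookkeeping needed to justify pulling $\eul^{hM_{t-1}}$ out of $\Erw\brk{\cdot|\cF_{t-1}}$ when peeling; everything else is a routine optimization. Since this inequality is classical, an equally acceptable alternative is simply to invoke~\cite[p.~37]{JLR}.
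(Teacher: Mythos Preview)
Your argument is the standard Chernoff--Hoeffding proof of Azuma's inequality and is correct. The paper itself gives no proof of this lemma at all: it simply states the bound and refers the reader to~\cite[p.~37]{JLR}, exactly the alternative you mention in your last sentence. So there is nothing to compare against; you have supplied strictly more than the paper does.

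One small remark: your union bound over the two tails produces an extra factor of~$2$ in front of the exponential, and the lemma as stated in the paper omits it. You flag this yourself, but ``absorbing'' the $2$ is not literally justified without weakening the constant in the exponent. For the purposes of this paper the discrepancy is irrelevant (the bound is only ever used to show a probability is $o(1)$), and indeed many references state Azuma with the factor of~$2$; the version quoted here is slightly loose in this respect.
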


A $k$-CNF $\Phi=\Phi_1\wedge\cdots\wedge\Phi_m$ gives rise to a bipartite graph
whose vertices are the variables $V$ and the clauses $\cbc{\Phi_i:i\in\brk m}$, and in which each clause is adjacent to all the variables that occur in it.
This 
is the \emph{factor graph} of $\Phi$.
For a vertex $v$ of the factor graph we denote by $N(v)=N_\Phi(v)$ the neighborhood of $v$ in the factor graph.
For a set $Z\subset\brk m$ we let $N(\Phi_Z)=\bigcup_{i\in Z}N(\Phi_i)$ be the set of all variables
that occur in the sub-formula $\Phi_Z$.

Let $A,B$ be two disjoint sets of vertices of the factor graph.
Recall that a \emph{$l$-fold matching from $A$ to $B$} is a set $M$ of $A$-$B$-edges 
such that each $a\in A$ is incident with precisely $l$ edges from $M$, while
each $b\in B$ is incident with at most one edge from $M$.
We will make use of the following simple expansion property of the factor graph of random formulas.

\begin{lemma}\label{Lemma_Hall}
There is a constant $k_0>0$ such that for all
$k\geq k_0$ and for $m/n\leq2^k\ln 2$ 
the random formula $\PHI$ has the following property \whp\ 
	\begin{equation}\label{eqHall}
	\parbox[t]{13cm}{For any set $Z\subset\brk m$ of size $\abs Z\leq n/k^2$
	there is a $0.9k$-fold matching from $\PHI_Z$ to $N(\PHI_Z)$.}
	\end{equation}
\end{lemma}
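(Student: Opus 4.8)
The plan is to deduce~\eqref{eqHall} from a first-moment estimate, using a generalization of Hall's theorem to trade the matching requirement for an expansion condition on the factor graph. Recall that in a bipartite graph with sides $A,B$ and an integer $l\geq1$, an $l$-fold matching from $A$ to $B$ exists once $\abs{N(W)}\geq l\abs W$ for every $W\subseteq A$: blowing up each $a\in A$ into $l$ twins joined to $N(a)$ turns this into the ordinary marriage condition (for a set $S$ of twins one has $\abs{N(S)}=\abs{N(W)}$ and $\abs S\leq l\abs W$, where $W$ is the set of originals represented in $S$), and a matching of the blown-up graph that saturates all twins projects onto an $l$-fold matching of the original graph. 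Fix $k$ and set $l=\lceil0.9k\rceil$, so $0.9k\leq l\leq k$. Applying this with $A=\cbc{\PHI_i:i\in Z}$ and $B=N(\PHI_Z)$, and using that any $W\subseteq Z$ has $\abs W\leq\abs Z\leq n/k^2$, we see that a formula has property~\eqref{eqHall} provided
$$\abs{N(\PHI_W)}\geq l\abs W\qquad\text{for all }W\subseteq\brk m\text{ with }0<\abs W\leq n/k^2.$$
Thus it suffices to establish the displayed statement \whp; the $\abs W=1$ case merely asserts that every clause of $\PHI$ has at least $0.9k$ distinct variables.

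For the union bound, fix $1\leq w\leq n/k^2$. If some $W$ with $\abs W=w$ violates the bound, then the $kw$ literal slots of the clauses $\PHI_i$, $i\in W$, all have their underlying variables in some set $U\subseteq\brk n$ with $\abs U=lw$ (take an arbitrary $lw$-element superset of $N(\PHI_W)$; note $lw\leq n$). Since these underlying variables are mutually independent and uniform on $\brk n$, the probability that they all fall into a fixed such $U$ is $(lw/n)^{kw}$, so a union bound over the $\binom mw$ sets $W$ and the $\binom n{lw}$ sets $U$ yields
$$\pr\brk{\exists W:\ \abs W=w,\ \abs{N(\PHI_W)}<lw}\leq\binom mw\binom n{lw}\bc{\frac{lw}{n}}^{kw}.$$

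It remains to bound this and sum over $w$. With $x=w/n\in(0,1/k^2]$ and $\binom ab\leq(\eul a/b)^b$, the right-hand side is at most $B_k(x)^w$, where $B_k(x)=\eul\cdot(m/n)\cdot\eul^{l}\cdot l^{k-l}\cdot x^{k-l-1}$. For $k$ large, $k-l-1\geq0.1k-2>0$, so $B_k$ is increasing on $(0,1/k^2]$, and using $m/n\leq2^k\ln2$, $\eul^{l}\leq\eul^{k}$, $l^{k-l}\leq k^{0.1k}$ and $x\leq k^{-2}$ we get
$$B_k(x)\leq B_k(k^{-2})\leq\eul\cdot2^k\cdot\eul^{k}\cdot k^{0.1k}\cdot k^{-0.2k+4}=\eul\,k^{4}\,(2\eul)^{k}\,k^{-0.1k},$$
which tends to $0$ as $k\to\infty$. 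Hence there is a constant $k_0$ with $B_k(k^{-2})\leq1/2$ for all $k\geq k_0$; summing $B_k(w/n)^w\leq B_k(k^{-2})^w$ over $w\geq1$ then gives $\pr\brk{\exists W:\,0<\abs W\leq n/k^2,\ \abs{N(\PHI_W)}<l\abs W}\leq2B_k(k^{-2})=o(1)$, which proves the lemma.

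The computation is routine; the one point that needs care — and the source of the bound $k_0$ — is checking that the super-exponentially small factor $k^{-\Omega(k)}$ coming from the sparsity hypothesis $\abs Z\leq n/k^2$ (via $x^{k-l-1}$) dominates the $2^k$ from the clause density together with the entropy terms $\eul^{l}l^{k-l}=\eul^{O(k)}k^{0.1k}$. This balance is exactly what the choice of the constants $0.9$ (in the matching) and $1/k^2$ (in the size restriction on $Z$) is calibrated for, leaving room all the way up to density $2^k\ln2$.
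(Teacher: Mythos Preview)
Your approach is essentially the paper's: verify the defect form of Hall's condition by a first-moment bound, then invoke the marriage theorem. (The paper phrases the expansion dually---few clauses fit entirely into a small variable set $U$---but this is the same computation up to relabeling.)

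There is, however, a genuine gap in your last step. You bound
\[
\sum_{w\geq1} B_k(w/n)^w \;\leq\; \sum_{w\geq1} B_k(k^{-2})^w \;\leq\; 2B_k(k^{-2})
\]
and then declare this to be $o(1)$. But $B_k(k^{-2})$ depends only on $k$, not on $n$; for each fixed $k\geq k_0$ it is a positive constant, so your bound on the failure probability does not tend to $0$ as $n\to\infty$, which is what ``\whp'' requires. The uniform replacement $B_k(w/n)\leq B_k(k^{-2})$ discards precisely the $n$-dependence you need.

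The repair is routine and is exactly what the paper does: split the sum at a threshold such as $w_0=\ln^2 n$. For $1\leq w\leq w_0$ use $B_k(w/n)\leq C_k\,(w_0/n)^{k-l-1}=o(1)$, so these $\leq\ln^2 n$ terms contribute $o(1)$ in total. For $w>w_0$ use your bound $B_k(w/n)\leq B_k(k^{-2})\leq\tfrac12$ to get $\sum_{w>w_0}(1/2)^w=o(1)$. With this split the argument goes through.
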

\begin{proof}
We start by proving that \whp\ the random formula $\PHI$ has the following property.
	\begin{equation}\label{eqLemmaHall}
	\parbox[t]{13cm}{For any set $U$ of $\leq n/k$ variables we have
	$\abs{\cbc{i\in\brk m:N(\PHI_i)\subset U}}\leq1.1|U|/k.$}
	\end{equation}
To prove~(\ref{eqLemmaHall}) we use a `first moment' argument.
For set $U\subset V$ we let $X_U=1$ if $\abs{\cbc{i\in\brk m:N(\PHI_i)\subset U}}>1.1|U|/k$, and we set $X_U=0$ otherwise.
Then
	$$\Erw\brk{X_U}=\pr\brk{X_U=1}\leq\bink{m}{1.1|U|/k}(|U|/n)^{1.1|U|}.$$
Furthermore, for any $1\leq u\leq n/k$ we let $X_u=\sum_{U\subset V:|U|=u}X_U$.
Assuming that $k\geq k_0$ is sufficiently large, we obtain
	\begin{eqnarray*}
	\Erw\brk{X_u}&\leq&\sum_{U\subset V:|U|=u}\Erw\brk{X_U}
			\leq\bink{n}{u}\bink{m}{1.1u/k}\bcfr un^{1.1 u}\\
		&\leq&
		\brk{\frac{\eul n}u\cdot\brk{\bcfr{\eul m}{1.1u/k}^{1/k}\cdot\frac un }^{1.1}}^{u}
			\leq\brk{\frac{\eul n}u\brk{\bc{\frac{\eul2^kk\ln2}{1.1}\cdot\frac nu}^{1/k}\cdot\frac un}^{1.1}}^{u}\\
			&\leq&\brk{\eul\bcfr{u}n^{0.1-1/k}\bcfr{\eul2^kk\ln2}{1.1}^{1.1/k}}^{u}
				\leq\brk{\eul^2\bcfr{u}n^{0.09}}^u.
	\end{eqnarray*}
Summing the last expression over $1\leq u\leq n/k$ and assuming that $k\geq k_0$ is large enough,
we see that
	\begin{eqnarray*}
	\Erw\sum_{1\leq u\leq n/k}X_u&\leq&\sum_{1\leq u\leq\ln^2 n}\brk{\eul^2\bcfr{u}n^{0.09}}^u+\sum_{\ln^2n<u\leq n/k}\brk{\eul^2k^{-0.09}}^u\\
		&\leq&\ln^2n\cdot\eul^2(\ln^2n/n)^{0.09}+\frac nk\cdot\brk{\eul^2k^{-0.09}}^{\ln^2n}=o(1).
	\end{eqnarray*}
Thus, $\sum_{1\leq u\leq n/k}X_u=0$ \whp\ by Markov's inequality.
Hence, (\ref{eqLemmaHall}) holds true \whp

Now, assume that
$\PHI$ satisfies~(\ref{eqLemmaHall}).
Let $Z\subset\brk m$ be a set of size $|Z|\leq n/k^2$.
Let $Y\subset Z$ and let $U=N(\PHI_Y)$.
Then $|U|\leq n/k$, and $N(\PHI_i)\subset U$ for any $i\in Y$.
Therefore, (\ref{eqLemmaHall}) implies that $|Y|\leq1.1|U|/k$,
i.e., $|U|\geq\frac{k}{1.1}|Y|\geq0.9k|Y|$.
Hence, the assertion follows from the marriage theorem.
\qed\end{proof}

\noindent
The following lemma states a second expansion-type property.

\begin{lemma}\label{Lemma_core}
There exists a constant $k_0>0$ 
such that for all $k\geq k_0$ and for any $\eps>0,\lambda>4$ satisfying
$\eps\leq k^{-3}$ and
$\eps^\lambda\leq\frac1\eul(2\eul)^{-4k}$
the random formula $\PHI$ with $m/n\leq2^k\ln2$ has the following property \whp\ 
\begin{equation}\label{eqcore}
\parbox[c]{12cm}{
Let $Z\subset\brk m$ be any set of size $\abs Z\leq\eps n$.
If  
$i_1,\ldots,i_l\in\brk m\setminus Z$ is a sequence of pairwise distinct indices such that
	$$|N(\PHI_{i_s})\cap N(\PHI_{Z\cup\cbc{i_j:1\leq j<s}})|\geq\lambda\quad\mbox{ for all }1\leq s\leq l,$$
then $l\leq\eps n$.
}
\end{equation}
\end{lemma}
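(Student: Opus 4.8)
The plan is to prove the contrapositive by a first-moment argument on ``bad clusters''. Suppose~\eqref{eqcore} fails for an outcome $\Phi$. Since the overlap condition at step $s$ refers only to $Z$ and to $i_1,\dots,i_s$, we may truncate the offending sequence and assume that there is a set $Z$ with $z:=|Z|\le\eps n$ together with pairwise distinct indices $i_1,\dots,i_{l_0}\in[m]\setminus Z$, where $l_0:=\lfloor\eps n\rfloor+1$, such that $|N(\Phi_{i_s})\cap N(\Phi_{Z\cup\{i_j\,:\,j<s\}})|\ge\lambda$ for all $1\le s\le l_0$.

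The crucial structural observation is that the spanned variable set grows slowly. Put $U_s=N(\Phi_{Z\cup\{i_j\,:\,j\le s\}})$. Since $\Phi_{i_s}$ has at most $k$ distinct underlying variables, of which at least $\lambda$ already lie in $U_{s-1}$, we have $|U_s\setminus U_{s-1}|\le k-\lambda$. Consequently the cluster $T:=Z\cup\{i_1,\dots,i_{l_0}\}$ consists of $t:=z+l_0$ clauses and spans at most $v:=kz+l_0(k-\lambda)$ variables, and $kt-v=\lambda l_0$. It therefore suffices to prove that \whp\ there is \emph{no} integer $z\in\{0,\dots,l_0\}$ for which some set of $z+l_0$ clauses of $\Phi$ spans at most $kz+l_0(k-\lambda)$ variables.

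For a fixed $z$ there are $\binom{m}{t}$ candidate clause-sets, and for any one of them the $kt$ literal slots are i.i.d.\ uniform over $V$, so the probability that they realise at most $v$ distinct variables is at most $\binom{n}{v}(v/n)^{kt}\le\eul^{v}(v/n)^{kt-v}=\eul^{v}(v/n)^{\lambda l_0}$. Summing over $z$ costs only a factor $n$, so the probability that~\eqref{eqcore} fails is at most
$$n\cdot\max_{0\le z\le l_0}\ \binom{m}{z+l_0}\,\eul^{\,kz+l_0(k-\lambda)}\!\left(\frac{kz+l_0(k-\lambda)}{n}\right)^{\!\lambda l_0}.$$
Because $t=\Theta(\eps n)$ is \emph{linear} in $n$, the binomial $\binom{m}{t}$ has size only $\eul^{O(n)}$ (with $m/n=r$ fixed), rather than $n^{\Theta(n)}$; this is exactly why one unions over \emph{sets} of clauses rather than over ordered sequences, the cost of discarding the ordering being the replacement of the overlap condition by the weaker ``small span'' event. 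Each of the three factors above is a constant raised to a power $\Theta(\eps n)$, so the bound is $o(1)$ as soon as the corresponding base is $<1$. The worst $z$ is $z\approx l_0$; using $l_0\approx\eps n$, $v\le(2k-\lambda)l_0$ and $\binom{m}{2l_0}\le(\eul r/(2\eps))^{2l_0}$, the base there equals, up to a factor $\eul^{o(n)}$,
$$\frac{\eul^{\,2k-\lambda+2}\,r^{2}\,(2k-\lambda)^{\lambda}\,\eps^{\lambda-2}}{4}.$$
Now one inserts $r\le 2^k\ln2$ and the hypothesis $\eps^{\lambda}\le\frac1\eul(2\eul)^{-4k}$. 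The exponential-in-$k$ parts collapse: $(4\eul^{2})^{k}=(2\eul)^{2k}$ and $\eps^{\lambda-2}=\eps^{\lambda}\eps^{-2}$, and — since the bound is worst when $\eps$ is as large as the hypothesis permits — one is left with a quantity of order $(2k-\lambda)^{\lambda}(2\eul)^{-2k(\lambda-4)/\lambda}$, which is $<1$ for $k\ge k_0$. When $\lambda$ is a constant fraction of $k$ the hypothesis on $\eps^{\lambda}$ is not even needed: a cluster of $z+l_0$ clauses spanning $v\le n/k$ variables contradicts the expansion property~\eqref{eqLemmaHall} of Lemma~\ref{Lemma_Hall}, which would force $z+l_0\le1.1v/k=1.1z+1.1l_0(1-\lambda/k)$ and hence $l_0\le z\le\eps n<l_0$.

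The step I expect to be most delicate is this last piece of parameter bookkeeping: one must check that the exponentially-small-in-$k$ choice of $\eps$ really beats the counting factors $\binom{m}{t}$ and $\eul^{v}$ — equivalently, that the $O(1)$ per-clause counting cost is dominated by the per-clause span probability, which is of order $(\eps k)^{\lambda}$ — and that the estimate is uniform over the admissible pairs $(\eps,\lambda)$, which is why one wants both the first-moment bound (for moderate $\lambda$) and the expansion property~\eqref{eqLemmaHall} (for $\lambda$ comparable to $k$) at one's disposal.
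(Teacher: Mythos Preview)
Your overall strategy---a first-moment argument over ``dense'' clause clusters whose variable span is too small---is exactly the paper's strategy. The difference is in \emph{how} the union bound is organised, and that difference is where your argument breaks.

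You union over the combined set $T=Z\cup\{i_1,\dots,i_{l_0}\}$ of size $t=z+l_0$ together with its full span $V'$ of size $v=kz+l_0(k-\lambda)$, paying $\binom{m}{t}\binom{n}{v}$ and gaining $(v/n)^{kt}$. The paper instead keeps $Z$ and $I=\{i_1,\dots,i_{l_0}\}$ separate and unions only over the set $Y$ of \emph{new} variables contributed by $I$ beyond $N(\PHI_Z)$, of size $|Y|\le(k-\lambda)\eps n$; the probability that all $k|I|$ slots of $I$ land in $N(\PHI_Z)\cup Y$ is still at most $((k|Z|+|Y|)/n)^{k|I|}$. The point is that $N(\PHI_Z)$ is already determined once $Z$ is chosen, so paying entropy for it twice (once via $Z$, once via $V'$) is wasteful. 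A short calculation shows that the paper's base, per factor of $\eps n$, is smaller than yours by about $(2/\eul)^{k}$.

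This factor is exactly what kills your bound for $\lambda$ close to~$4$. With $\eps$ at the boundary $\eps^\lambda=\eul^{-1}(2\eul)^{-4k}$, your base becomes (up to bounded constants) $(2k-\lambda)^\lambda(2\eul)^{-2k(1-4/\lambda)}$. As $\lambda\downarrow4$ the second factor tends to~$1$ while the first is of order $(2k)^4$, so the base exceeds~$1$; concretely, for $\lambda=4+1/k$ your base is $\Theta((2k)^4)$. Hence there is \emph{no} $k_0$ that works uniformly for all $\lambda>4$. Your fallback to the expansion property~(\ref{eqLemmaHall}) only handles $\lambda>2k/11$, leaving the regime $4<\lambda\le2k/11$---in particular every constant $\lambda$ just above~$4$---uncovered.

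By contrast, the paper's decomposition yields the base $(2\eul)^{2k}\eps^{\lambda/2}$, and the hypothesis $\eps^\lambda\le\eul^{-1}(2\eul)^{-4k}$ gives $(2\eul)^{2k}\eps^{\lambda/2}\le\eul^{-1/2}<1$ \emph{uniformly} in $\lambda$. The fix for your write-up is precisely this: do not union over the full span of $T$, but over the triple $(Z,Y,I)$ with $|Y|\le(k-\lambda)\eps n$, and bound only the probability that the slots of $I$ land in $N(\PHI_Z)\cup Y$.
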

\begin{proof}
It is clearly sufficient to prove that the desired property holds \whp\ for all sets $Z$ of size \emph{precisely} $|Z|=\eps n$.
Assume that there is a set $Z$ and a sequence $\vec i=(i_1,\ldots,i_l)$ of pairwise distinct indices in $\brk m\setminus Z$ of length $l=\eps n$ such that
$|N(\PHI_{i_s})\cap N(\PHI_{\cI\cup\cbc{i_j:1\leq j<s}})|\geq\lambda\mbox{ for all }1\leq s\leq l$.
Then the sets $Y=\bigcup_{j=1}^l N(\PHI_{i_j})\setminus N(\PHI_Z)\subset V$ and $Z$ have the following properties.
\begin{enumerate}
\item[a.] $|Y|\leq\eps(k-\lambda)n$.
\item[b.] There is a set $I\subset\brk m\setminus Z$ of size $|I|=\eps n$ such that
			$N(\PHI_i)\subset N(\PHI_Z)\cup Y$ for all $i\in I$.
\end{enumerate}
Property a.\ holds because each clause $\PHI_{i_j}$ adds no more than $k-\lambda$ `new' variables to $Y$, and b.\
is true for the set $I=\cbc{i_j:1\leq j\leq l}$.

To prove that \whp\ there do not exist $Z$ and $\vec i$ of length $l=\eps n$ as above, we are going to show by a first moment
argument that \whp\ the random formula $\PHI$ does not feature sets $Y,Z$ that satisfy a.\ and b.
More precisely, for sets $Z\subset\brk m$ of size $|Z|=\eps n$, $Y\subset V$ of size $|Y|=\eps(k-\lambda)n$,
and $I\subset\brk m\setminus Z$ of size $|I|=\eps n$ we let $\cE(Z,Y,I)$ be the event that $N(\PHI_i)\subset N(\PHI_Z)\cup Y$ for all $i\in I$.
Then for any fixed $Z,Y,I$ we have
	\begin{eqnarray*}
	\pr\brk{\cE(Z,Y,I)}&\leq&\bcfr{k|Z|+|Y|}{n}^{k|I|}\leq(\eps(2k-\lambda))^{k\eps n},
	\end{eqnarray*}
because each of the $k|I|$ variable occurrences in the clauses $\PHI_I$ is uniformly distributed over $V$.
Hence, by the union bound, for large enough $k$
	\begin{eqnarray}
	\pr\brk{\exists Z,Y,I:\cE(Z,Y,I)}&\leq&\sum_{Z,Y,I}\pr\brk{\cE(Z,Y,I)}
			\leq\bink{m}{\eps n}^2\bink{n}{\eps n(k-\lambda)}(\eps(2k-\lambda))^{k\eps n}\nonumber\\
		&\leq&\brk{\bcfr{\eul m}{\eps n}^2\bcfr{\eul}{\eps(k-\lambda)}^{k-\lambda}(\eps(2k-\lambda))^k}^{\eps n}\nonumber\\
		&\leq&\brk{\bcfr{\eul2^k}{\eps}^2\bcfr{\eul(2k-\lambda)}{k-\lambda}^{k-\lambda}(2k\eps)^\lambda}^{\eps n}\nonumber\\
		&\leq&\brk{\bcfr{\eul2^k}{\eps}^2\exp\bc{2k}(2k\eps)^\lambda}^{\eps n}
				\leq\brk{\bc{2\eul}^{2k}\eps^{\lambda/2}}^{\eps n}, 
		\label{eqEZYI}
	\end{eqnarray}
where the last inequality follows from our assumption that $\eps\leq k^{-3}$ with $k\geq k_0$ sufficiently large.
Due to our assumption that $\eps^\lambda\leq\frac1\eul\eul(2\eul)^{-4k}$, (\ref{eqEZYI}) yields
	$\pr\brk{\exists Z,Y,I:\cE(Z,Y,I)}\leq\exp(-\eps n)=o(1)$,
whence the assertion follows.
\qed\end{proof}

Finally, it will be convenient to assume in our proof of \Thm~\ref{Thm_pos} that the formula density $r=m/n$ is `not too small'
and that the clause length $k$ is sufficiently large.
These assumptions are justified as the case of small $k$ or very small $r$ is already covered by~\cite{CFFKV}.

\begin{theorem}[\cite{CFFKV}]\label{Thm_CFFKV}
There is a constant $k_0>3$ such that for all $k\geq k_0$ and all $r\leq \frac16\cdot 2^k/k^2$ \whp\
\Walksat$(\PHI,n)$ will find a satisfying assignment.
\end{theorem}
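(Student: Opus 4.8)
The plan is to show that when $r\le\frac16\cdot 2^k/k^2$ the portion of $\PHI$ that \Walksat\ ever ``touches'' is a vanishingly small and highly expanding sub-formula, on which \Walksat\ behaves like a random walk with a strong drift towards a satisfying assignment. Call a clause \emph{active} if it is violated under the current assignment at some point during the run of \Walksat$(\PHI,n)$, and let $R\subseteq[m]$ be the (random) set of active clauses. The elementary observation that sidesteps the dynamics is this: \Walksat\ only ever inspects active clauses --- at every step it picks a \emph{currently} violated clause, which is active, and flips one of its variables --- so all flipped variables lie in $N(\PHI_R)$, clauses outside $R$ are never looked at, and the run of \Walksat\ on $\PHI$ is literally a run of \Walksat\ on the sub-formula $\PHI_R$ over the variable set $N(\PHI_R)$. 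Hence it suffices to (i) show that \whp\ $\abs R\le n/k^2$, and (ii) analyse \Walksat\ on a sub-formula that small.

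For (i): by a Chernoff bound, \whp\ the all-true assignment violates at most $1.1\cdot m2^{-k}\le n/(5k^2)$ clauses; call this set $R_0$, so $R_0\subseteq R$. A clause $C$ can only \emph{become} violated through a flip of one of its variables forced by another currently violated clause, and at the moment $C$ becomes violated all $k$ of its literals are false, so (reading from the all-true start) every positive-literal variable of $C$ has already been flipped and therefore lies in $N(\PHI_{R'})$ for the set $R'$ of clauses active so far. The expected number of clauses that a single flip can newly violate is about $(mk/n)\cdot 2^{-(k-1)}=O(1/k)$, i.e.\ strictly subcritical, and this is precisely where the hypothesis $r=O(2^k/k^2)$ enters. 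Turning this into a rigorous bound means showing, by a first-moment/union-bound argument in the spirit of \Lem~\ref{Lemma_core} (using \Lem~\ref{Lemma_filt} to bound products of the relevant conditional probabilities along a ``triggering chain'' $i_1,\ldots,i_l$), that \whp\ $\PHI$ contains no set $Z$ with $n/(5k^2)\le\abs Z\le n/k^2$ that is closed under this triggering relation; since $R$ is triggering-closed and contains $R_0$, it then cannot grow past $n/k^2$. I expect step (i) to be the main obstacle: it is exactly the point where the stochastic dependencies in the execution of \Walksat\ intrude, and the subcriticality computation makes it clear that the bound $r<2^k/k^2$ is intrinsic to this method --- above that density the active region is no longer $o(n)$ and the argument breaks.

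Given (i), step (ii) is comparatively routine. Apply \Lem~\ref{Lemma_Hall} to $R$ (legitimate since $\abs R\le n/k^2$) to obtain a $0.9k$-fold matching from $\PHI_R$ to $N(\PHI_R)$; in particular every active clause has at least $0.9k$ variables that occur in no other active clause, and, combined with the local-sparsity statement of \Lem~\ref{Lemma_core} (with a suitable $\lambda>4$), $\PHI_R$ is ``locally almost a matching''. Now track $U_t=$ number of violated clauses after $t$ steps, noting $U_0=\abs{R_0}\le n/(5k^2)$ and that every violated clause lies in $R$. At step $t$ \Walksat\ picks a violated clause $C$ and a uniform variable of $C$; with probability at least $0.9$ that variable is private to $C$ within $R$, so flipping it satisfies $C$ and violates no clause at all (a clause outside $R$ is never violated, by definition of $R$), giving $U_{t+1}=U_t-1$, while in the remaining cases $U_t$ grows by a controlled amount --- controlled because \Lem~\ref{Lemma_core} forbids the active clauses through any one variable from being numerous enough to cause a large cascade. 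Thus $U_t$ is stochastically dominated by a random walk with drift bounded away from $0$ in the negative direction, started from $n/(5k^2)$, so by \Lem~\ref{Azuma} it reaches $0$ within $(1+o(1))\,n/(5k^2)\le n$ steps \whp; and $U_t=0$ means the current assignment satisfies $\PHI$. Intersecting the \whp\ events (``$\abs{R_0}\le n/(5k^2)$'', ``no triggering-closed set of intermediate size'', the conclusions of \Lem~\ref{Lemma_Hall} and \Lem~\ref{Lemma_core}, and the final martingale estimate) yields \Thm~\ref{Thm_CFFKV} for all $k\ge k_0$ and $r\le\frac16\cdot 2^k/k^2$.
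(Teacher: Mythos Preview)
The paper does not prove \Thm~\ref{Thm_CFFKV}; it is quoted from~\cite{CFFKV} and invoked solely so that the main proof may assume $r\ge 2^k/k^3$. So there is no ``paper's own proof'' to compare against. That said, your sketch is in the spirit of the argument in~\cite{CFFKV} (bound a deterministic super-set of the clauses \Walksat\ can ever touch, then exploit expansion on that small sub-formula), and it correctly identifies why the barrier $2^k/k^2$ is intrinsic to this route. Two points deserve tightening, however.

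First, in step~(i) the object $R$ you define depends on \Walksat's coin flips, and it is \emph{not} triggering-closed in the sense you need: a clause may have all its positive variables in $N(\PHI_R)$ yet never actually become violated. What you want is the deterministic closure $\bar R_0\supseteq R$ of the all-negative clauses under ``all my positive literals lie in $N(\PHI_Z)$''. Moreover, ``no triggering-closed set of size in $[n/(5k^2),\,n/k^2]$'' does not bound $|\bar R_0|$, since $\bar R_0$ could simply be larger than $n/k^2$. The correct first-moment target is: no set $Z$ of size exactly $\lfloor n/k^2\rfloor$ with $R_0\subseteq Z$ such that every clause of $Z\setminus R_0$ is triggered by $Z$; if $|\bar R_0|>n/k^2$ then the first $\lfloor n/k^2\rfloor$ clauses added by the closure process furnish such a $Z$.

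Second, in step~(ii) a $0.9k$-fold matching from $\PHI_R$ to $N(\PHI_R)$ does \emph{not} give each clause $0.9k$ variables occurring in no other clause of $R$: a variable matched to clause $i$ may still occur (unmatched) in other clauses, so flipping it can create new violations inside $R$, and your drift computation for $U_t$ breaks. The clean fix is to use the matching to define a target assignment $\tau$ on $N(\PHI_R)$ (each matched variable set so as to satisfy its clause) and track $H_t=|\{x:\sigma_t(x)\neq\tau(x)\}|$ instead of $U_t$: any violated clause has all $\ge0.9k$ of its matched literals false, so with probability $\ge0.9$ the flip moves $\sigma_t$ one step closer to $\tau$, and \Lem~\ref{Azuma} finishes as you intended. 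This is exactly the mechanism the present paper reuses (in a more elaborate form) via the rich assignments $\tau_t$ of \Prop~\ref{Prop_tau}.
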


\section{Outline of the analysis}\label{Sec_Outline}

{\em Throughout this section we assume that $k\geq k_0$ for some large enough constant $k_0>0$,
	and that $r=m/n\sim\rho\cdot2^k/k$ with $k^{-2}\leq \rho<\rho_0=1/25$. 
	We can make these assumptions as otherwise the assertion of \Thm~\ref{Thm_pos} already follows from \Thm~\ref{Thm_CFFKV}.
	Furthermore, let
		\begin{equation}\label{eqlambdaeps}
		\lambda=\sqrt k\mbox{ and }\eps=\exp(-k^{2/3}).\end{equation}}%
The standard approach to analyzing an algorithm on random $k$-SAT formulas is the \emph{method of deferred decisions},
which often reduces the analysis to the study of a system of ordinary differential equations that capture the dynamics of the algorithm~\cite{AchDiffEq}.
Roughly speaking, the method of deferred decisions applies where the state of the algorithm after a given number of steps
can be described by a simple probability distribution, depending only on a very few parameters determined by the past decisions of the algorithm.
This is typically so in the case of simple backtrack-free algorithms such as {\tt UnitClause}.

However, in the case of \Walksat, this approach does not apply because
the algorithm is bound to flip many variables more than once.
This entails that the algorithms' future steps depend on past events in a more complicated way than the method
of deferred decisions can accommodate.
Hence, our approach will be to use the method of deferred decisions to trace the effect of flipping
a variable \emph{for the first time}.
But we will need additional arguments to deal with the dependencies that arise out of flipping
the same variable several times.

To get started, let us investigate the effect of the \emph{first} flip that \Walksat\ performs.
Let $\sigma=\vecone$ be the assignment that sets every variable to true.
Clearly, a clause $\PHI_i$ is unsatisfied under $\sigma$ iff it consists of negative literals only.
As $\PHI$ consists of $m$ uniformly random and independent clauses, the number of unsatisfied
clauses has a binomial distribution $\Bin(m,2^{-k})$, and thus there will be $(1+o(1))2^{-k}m\sim \rho n/k$ all-negative clauses \whp\
To perform its first flip, \Walksat\ chooses an index $i\in\brk m$ such that $\PHI_i$ is all-negative uniformly at random,
then chooses a literal index $j\in\brk k$ uniformly, and sets $\sigma(|\PHI_{ij}|)$ to false,
thereby satisfying clause $\PHI_i$.

But, of course, flipping $|\PHI_{ij}|$ may well generate new unsatisfied clauses.
We need to study their number.
As $\PHI_i$ is just a uniformly random all-negative clause, the random variable $|\PHI_{ij}|$ is uniformly
distributed over the set of all $n$ variables, and thus we may assume without loss that $|\PHI_{ij}|=x_1$.
Furthermore, if a clause $\PHI_l$ becomes unsatisfied because variable $x_1$ got flipped, then $x_1$ must
have been the only variable that appears positively in $\PHI_l$.
Now, the number of clauses whose only positive literal is $x_1$ has distribution $\Bin(m,k/(n2^k)+O(1/n^2))$.
Indeed, the probability that a random clause has precisely one positive literal is $k/2^k$,
	and the probability that this positive literal happens to be $x_1$ is $1/n$;
	the $O(1/n^2)$ accounts for the number of clauses in which variable $x_1$ occurs more than once.
Hence, the \emph{expected} number of newly created unsatisfied clauses equals
	$(1+o(1))\frac{km}{2^kn}\sim\rho.$

In summary, as we are assuming that $\rho\leq\rho_0=1/25<1$, the \emph{expected change} in the number of unsatisfied clauses as a result
of the first flip is bounded from above by
	$$\rho-1+o(1)<0.$$
(The precise value is even smaller because $x_1$ may occur
in further all-negative clauses.)
Thus, we expect that the first flip will indeed reduce the number of unsatisfied clauses.
Of course, this simple calculation does not extend to the further steps of \Walksat\
because knowing the outcome of the first flip renders the
various above statements about clauses/literals being uniformly distributed invalid.

To analyze the further flips, we will describe \Walksat\ as a stochastic process.
Our time parameter will be the number of iterations of the main loop
	(Steps~2--4 in Figure~\ref{Fig_Walksat}),
i.e., the number of flips performed.
To represent the conditioning of the random input formula imposed
up to time $t$, we will define a sequence of random maps $(\pi_t)_{t\geq0}$.
These maps reflect for each pair $(i,j)\in\brk m\times\brk k$ the conditional distribution of the literals $\PHI_{ij}$,
	given the information that \Walksat\ has revealed after performing the first $t$ flips.
More precisely, the value of $\pi_t(i,j)$ will either be just the \emph{sign} of the literal $\PHI_{ij}$, or the
actual literal $\PHI_{ij}$ itself.
In the initial map $\pi_0$, we have
$\pi_0(i,j)=\sign(\PHI_{ij})$ for all $(i,j)\in\brk m\times\brk k$.

At times $t\geq1$ the map $\pi_t$ will feature the occurrences of all variables that have been flipped thus far.
That is, for any pair $(i,j)$ such that \Walksat\ has flipped the variable $|\PHI_{ij}|$ at least
once by time $t$, we let $\pi_t(i,j)=\PHI_{ij}$. 
This information will be necessary for us to investigate the effect of flipping the same
variable more than once.

In addition, we need to pay particular attention to clauses that contain many variables that have
been flipped at least once.
The reason is that these clauses have `too little randomness' left for a direct analysis, and thus we will need to study them separately.
More precisely, in our map $\pi_t$ we will fully reveal all clauses $\PHI_i$  
in which at least 
	\begin{equation}\label{eqk1}
	k_1=0.57\,k
	\end{equation}
literals $\PHI_{ij}$ have been flipped at least once.
Furthermore, we will also recursively reveal all clauses that contain at least $\lambda$ variables
from clauses that were fully revealed before.
This recursive process ensures that we can separate the analysis of clauses that are `heavily conditioned' by
the past steps of \Walksat\ from the bulk of the formula.

Throughout this process that mirrors the execution of \Walksat, all variables whose occurrences have been revealed will be labeled
either with an asterisk or with a zero.
Those variables that got revealed because they occur either in a `heavily conditioned' clause or in another
clause that got revealed by the recursive process described in the previous paragraph will be labeled $0$.
All other variables 
that have been flipped by \Walksat\ at least once are labeled $*$.
We will let $\cA_t$ denote the set of all variables labeled $*$, and $\cN_t$ the set of all variables labeled $0$.

Let us now define the maps $\pi_t$ and the sets $\cA_t,\cN_t$ formally.
Each $\pi_t$ is a map $\brk m\times\brk k\ra\cbc{-1,1}\cup L$,
with $L=\cbc{x_1,\bar x_1,\ldots,x_n,\bar x_n}$ the set of literals.
As mentioned above, we let $\pi_0(i,j)=\sign(\PHI_{ij})$ for all $(i,j)\in\brk m\times\brk k$.
Additionally, let $\cA_0=\cN_0=\cZ_0=\emptyset$, and let $\sigma_0:V\ra\cbc{0,1},\, x\mapsto1$ be the all-true assignment.
For a set $S\subset V$ we call a clause $\PHI_i$ \emph{$S$-negative} if for all $j\in\brk k$ with $\sign(\PHI_{ij})=1$
we have $\PHI_{ij}\in S$.
(In other words, $\PHI_i$ is $S$-negative if all of its positive literals lie in $S$.)
For $t\geq1$, we define the maps $\pi_t$ along with the sets $\cA_t,\cN_t,\cZ_t$ inductively via the process shown in Figure~\ref{Fig_PI}.
Intuitively, the set $\cZ_t$ contains the clauses that are `heavily conditioned' at time $t$, and $\cN_t$ is
the set of variables that occur in such clauses.
Moreover, $\cA_t$ is the set of all variables that have been flipped at least once by time $t$ except
the ones that belong to $\cN_t$.

\begin{figure}\normalsize
\begin{tabular}{ll}
{\bf PI0.}&\parbox[t]{14cm}{If the assignment $\sigma_{t-1}$ satisfies $\PHI$, then the process terminates.}\\
{\bf PI1.}&\parbox[t]{14cm}{Otherwise, choose an index $i_t$ such that $\PHI_{i_t}$ is unsatisfied under $\sigma_{t-1}$
		uniformly at random from the set of all such indices.
		In addition, choose $j_t\in\brk k$ uniformly at random.
		Define $\sigma_t:V\ra\cbc{0,1}$
		by letting $\sigma_t(|\PHI_{i_tj_t}|)=1-\sigma_{t-1}(|\PHI_{i_tj_t}|)$ and $\sigma_t(x)=\sigma_{t-1}(x)$ for all $x\neq|\PHI_{i_tj_t}|$.}\\
{\bf PI2.}&\parbox[t]{14cm}{
		Initially, let $\cZ_t=\cZ_{t-1}$ and $\cN_t=\cN_{t-1}$.\\
		While there is an index $i\in\brk m\setminus\cZ_t$ such that
				$\PHI_i$ is $(\cA_{t-1}\cup\cN_t\cup\cbc{|\PHI_{i_tj_t}|})$-negative and either
				\begin{itemize}
				\item there are at least $k_1$ indices $j\in\brk k$ with $|\PHI_{ij}|\in\cA_{t-1}\cup\cbc{|\PHI_{i_tj_t}|}$, or
				\item there are more than $\lambda$ indices $j\in\brk k$	with $|\PHI_{ij}|\in\cN_t$,
				\end{itemize}
			add the least such index $i_{\min}$ to $\cZ_t$ and add the variables
				$\cbc{|\PHI_{i_{\min} j}|:j\in\brk k}$ to $\cN_t$.
	}\\
{\bf PI3.}&\parbox[t]{14cm}{Let $\cA_t=(\cA_{t-1}\cup\cbc{|\PHI_{i_tj_t}|})\setminus\cN_t$.\\
		Define the map $\pi_t:\brk m\times\brk k\ra\cbc{-1,1}\cup L$ by letting
			$$\pi_t(i,j)=\left\{
					\begin{array}{cl}
					\PHI_{ij}&\mbox{ if }|\PHI_{ij}|\in\cA_t\cup\cN_t,\\
					\sign(\PHI_{ij})&\mbox{ otherwise}.
					\end{array}
					\right.$$}
\end{tabular}
\caption{the construction of the maps $\pi_t$}\label{Fig_PI}
\end{figure}

Let $T$ be the stopping time of this process, i.e., the minimum $t$ such that $\sigma_t$ satisfies $\PHI$ (or $\infty$ if there is no such $t$).
For $t>T$, we define $\pi_t=\pi_T$, $\sigma_t=\sigma_T$, $\cA_t=\cA_T$, $\cN_t=\cN_T$, and $\cZ_t=\cZ_T$.

Steps~{\bf PI0}--{\bf PI1} mirror the main loop of the \Walksat\ algorithm;
	in particular, the stopping time $T$ equals the total number of iterations of the main loop of \Walksat\ before
	a satisfying assignment is found.
The purpose of the remaining steps is to `update' the sets $\cA_t$ and $\cZ_t$ and the map $\pi_t$ as described above.
Before we continue, it may be useful to illustrate the construction of the maps $\pi_t$ with an example.

\begin{example}
Let us go through the example of a 5-SAT formula with $6$ clauses on $10$ variables.
For the sake of this example, we will work with $k_1=2$ and $\lambda=2$.
	(Recall that in our proof we actually assume that $k\geq k_0$ is large enough, $k_1$ is as in~(\ref{eqk1}) and
		$\lambda=\sqrt k$.)
We will represent the maps $\pi_t$ by tables whose columns correspond to the clauses $\PHI_i$.
Thus, the $j$th entry in column $i$ represents the value $\pi_t(i,j)$.
To improve readability, we just write $+$ and $-$ instead of $\pm1$.
Suppose that the initial map $\pi_0$, containing the signs of all literals, reads
	$$
	\textcolor{black}{\pi_0=}
	\begin{array}{cccccc}
	-&-&-&+&+&+\\
	-&+&-&+&-&+\\
	-&-&-&-&-&+\\
	-&-&-&-&+&+\\	
	-&-&-&-&+&+
	\end{array}
	$$
The initial assignment $\sigma_0$ is the all-true assignment, and $\cA_0=\cN_0=\cZ_0=\emptyset$.
Throughout, we will mark the variables in $\cA_t$ by an asterisk $*$ and the variables in $\cN_t$ by a $0$.

Being all-negative, clauses $\PHI_1$ and $\PHI_3$ are unsatisfied under $\sigma_0$.
Therefore, at time $t=1$ step {\bf PI1} chooses $i_1\in\cbc{1,3}$ randomly;
	say, the outcome is $i_1=1$.
In addition, {\bf PI1} chooses $j_1\in\brk k=\cbc{1,2,3,4,5}$ uniformly at random.
Suppose the result is $j_1=5$.
To carry on, we need to reveal the variable $|\PHI_{15}|$.
Thus far, the process has not imposed any conditioning on $|\PHI_{15}|$, and therefore
this variable is uniformly distributed over the set of all our $n=10$ variables.
Assume that indeed $|\PHI_{15}|=x_1$.
Then {\bf PI1} sets $\sigma_1(x_1)=0$ and $\sigma_1(x)=1$ for all $x\neq x_1$.

To implement {\bf PI2} we need to reveal all occurrences of $x_1$ in our random formula.
As there is no previous conditioning on any of variables $|\PHI_{ij}|$ with $(i,j)\neq(1,5)$,
these variables remain independently uniformly distributed over the set of all variables,
and thus the events $\cbc{|\PHI_{ij}|=x_1}$ occur independently with probability $1/n$.
Suppose that $x_1$ occurs at the following positions:
	$$
	\textcolor{white}{\pi_0=}
	\begin{array}{cccccc}
	-&-&-&{\bf x_1}&+&+\\
	-&{\bf x_1}&-&+&-&+\\
	-&-&-&-&{\bf \bar x_1}&+\\
	-&-&-&-&+&+\\	
	{\bf \bar x_1}&-&-&-&+&{\bf x_1}
	\end{array}
	$$
Then there is no clause with at least $k_1$ occurrences of a variable from $\cA_0\cup\cN_0\cup\cbc{x_1}=\cbc{x_1}$,
and thus step {\bf PI2} is void.
Hence, at the end of the first iteration we have $\cA_1=\cbc{x_1}$, $\cN_1=\cZ_1=\emptyset$, and
	$$
	\textcolor{black}{\pi_1=}
	\begin{array}{cccccc}
	-&-&-&x_1^*&+&+\\
	-&x_1^*&-&+&-&+\\
	-&-&-&-&\bar x_1^*&+\\
	-&-&-&-&+&+\\	
	\bar x_1^*&-&-&-&+&x_1^*
	\end{array}
	$$

At time $t=2$ there are two unsatisfied clauses: $\PHI_2$, whose only positive literal got flipped to false, and $\PHI_3$, which was unsatisfied initially.
Step {\bf PI1} chooses one of them randomly, say $i_2=2$, and also chooses a random position $j_2\in\brk k$, say $j_2=2$.
As we already know from the first step, the literal in this position is $\PHI_{22}=\pi_1(2,2)=x_1$.
In effect, the second iteration reverses the flip made in the first one and thus $\sigma_2$ is the all-true assignment.
Since we have revealed all the occurrences of $x_1$  already, step {\bf PI2} is void and $\pi_2=\pi_1$,
$\cA_2=\cbc{x_1}$, and $\cN_2=\cZ_2=\emptyset$.

At the start of the third iteration the unsatisfied clauses are $\PHI_1,\PHI_3$.
Suppose {\bf PI1} chooses $i_3=1$ and $j_3=1$.
Then we need to reveal the variable $|\PHI_{11}|$.
At this point, the only conditioning imposed on this variable is that it is different from $x_1$,
because all occurrences of $x_1$ have been revealed already.
Thus, $|\PHI_{11}|$ is uniformly distributed over $x_2,\ldots,x_{10}$.
Suppose that $|\PHI_{11}|=x_2$.
Then $\sigma_3(x_2)=0$ and $\sigma_3(x)=1$ for all $x\neq x_2$.
To reveal the occurrences of $x_2$ all over the formula, note that by the same argument
we applied to $|\PHI_{11}|$ all spots marked $\pm$ in
$\pi_2$ hide variables that are uniformly distributed over $x_2,\ldots,x_{10}$.
Let us assume that $x_2$ occurs in the following positions.
	$$
	\textcolor{white}{\pi_1=}
	\begin{array}{cccccc}
	{\bf \bar x_2}&-&-&x_1^*&+&+\\
	-&x_1^*&-&+&-&+\\
	-&-&{\bf \bar x_2}&-&\bar x_1^*&+\\
	-&-&-&-&+&{\bf x_2}\\	
	\bar x_1^*&-&-&-&+&x_1^*
	\end{array}
	$$

As clause $\PHI_1$ is $\cA_2\cup\cN_2\cup\cbc{x_2}=\cbc{x_1,x_2}$-negative and contains $k_1=2$ occurrences of variables from
	$\cA_2\cup\cbc{x_2}=\cbc{x_1,x_2}$, {\bf PI2} sets $\cZ_3=\cbc{1}$, reveals
the remaining three variables in $\PHI_1$, and adds all variables that occur in $\PHI_1$ to $\cN_3$.
Suppose that the remaining variables in $\PHI_1$ are $|\PHI_{12}|=x_3$, $|\PHI_{13}|=x_4$, $|\PHI_{13}|=x_5$.
Then $\cN_3=\cbc{x_1,x_2,x_3,x_4,x_5}$; in particular, $x_1,x_2$ are now labeled $0$.
The new $0$ label `overwrites' the $*$ because {\bf PI3} ensures that $\cA_3=(\cA_2\cup\cbc{x_2})\setminus\cN_3=\emptyset$.
In order to carry out {\bf PI2}, we need to reveal all occurrences of variables from $\cN_3$.
Suppose this yields
	$$
	\textcolor{white}{\pi_1=}
	\begin{array}{cccccc}
	\bar x_2^0&-&-&x_1^0&+&+\\
	{\bf \bar x_3^0}&x_1^0&-&{\bf x_5^0}&-&+\\
	{\bf \bar x_4^0}&-&-&-&\bar x_1^0&+\\
	{\bf \bar x_5^0}&-&-&{\bf \bar x_4^0}&{\bf x_3^0}&x_2^0\\	
	\bar x_1^0&-&{\bf \bar x_5^0}&{\bf \bar x_3^0}&{\bf x_4^0}&x_1^0
	\end{array}
	$$
Then clause $\PHI_4$ has become $\cA_2\cup\cN_3\cup\cbc{x_2}=\cbc{x_1,\ldots,x_5}$-negative (as there is no $+$-sign left in column four),
and thus {\bf PI2} sets $\cZ_3=\cbc{1,4}$.
To proceed, we need to reveal the remaining $-$-sign of $\PHI_4$, add the underlying variable
to $\cN_3$, and reveal all of its occurrences.
Suppose that this yields
	$$
	\textcolor{white}{\pi_1=}
	\begin{array}{cccccc}
	\bar x_2^0&-&-&x_1^0&+&+\\
	\bar x_3^0&x_1^0&-&x_5^0&-&+\\
	\bar x_4^0&-&-&{\bf \bar x_6^0}&\bar x_1^0&{\bf x_6^0}\\
	\bar x_5^0&-&-&\bar x_4^0&x_3^0&x_2^0\\	
	\bar x_1^0&-&\bar x_5^0&\bar x_3^0&x_4^0&x_1^0
	\end{array}
	$$
At this point {\bf PI2} stops, because clauses $\PHI_5,\PHI_6$ have $+$-signs left
and clauses $\PHI_2,\PHI_3$ contain only one variable labeled $0$.
Thus, at the end of the third iteration we have $\cA_3=\emptyset$, $\cN_3=\cbc{x_1,\ldots,x_6}$,$\cZ_3=\cbc{1,4}$, and
	$$
	\textcolor{black}{\pi_3=}
	\begin{array}{cccccc}
	\bar x_2^0&-&-&x_1^0&+&+\\
	\bar x_3^0&x_1^0&-&x_5^0&-&+\\
	\bar x_4^0&-&-&\bar x_6^0&\bar x_1^0&x_6^0\\
	\bar x_5^0&-&-&\bar x_4^0&x_3^0&x_2^0\\	
	\bar x_1^0&-&\bar x_5^0&\bar x_3^0&x_4^0&x_1^0
	\end{array}
	$$

As the fourth iteration commences, the only unsatisfied clause left is $\PHI_3$, whence $i_4=3$.
Moreover, assume that $j_4=1$.
As we have revealed all occurrences of $x_1,\ldots,x_6$, at this point we know that
$|\PHI_{31}|$ is uniformly distributed over $\cbc{x_7,x_8,x_9,x_{10}}$.
Suppose that indeed $|\PHI_{31}|=x_7$.
Thus, {\bf PI1} sets $\sigma_4(x_2)=\sigma_4(x_7)=0$ and $\sigma_4(x)=1$ for all $x\neq x_2,x_7$.
Suppose that revealing all occurrences of $x_7$ yields
	$$
	\textcolor{white}{\pi_3=}
	\begin{array}{cccccc}
	\bar x_2^0&-&{\bf \bar x_7}&x_1^0&+&{\bf x_7}\\
	\bar x_3^0&x_1^0&-&x_5^0&{\bf \bar x_7}&+\\
	\bar x_4^0&{\bf \bar x_7}&-&\bar x_6^0&\bar x_1^0&x_6^0\\
	\bar x_5^0&-&-&\bar x_4^0&x_3^0&x_2^0\\	
	\bar x_1^0&-&\bar x_5^0&\bar x_3^0&x_4^0&x_1^0
	\end{array}
	$$
Then there are no $\cA_3\cup\cN_3\cup\cbc{x_7}$-negative clauses $\PHI_i$ with $i\not\in\cZ_3$ that have at least two occurrences
of a variable from $\cA_3\cup\cbc{x_7}$.
Therefore, {\bf PI2} is void, and  at the end of the fourth iteration we have
	$$
	\textcolor{black}{\pi_4=}
	\begin{array}{cccccc}
	\bar x_2^0&-&\bar x_7^*&x_1^0&+&x_7^*\\
	\bar x_3^0&x_1^0&-&x_5^0&\bar x_7^*&+\\
	\bar x_4^0&\bar x_7^*&-&\bar x_6^0&\bar x_1^0&x_6^0\\
	\bar x_5^0&-&-&\bar x_4^0&x_3^0&x_2^0\\	
	\bar x_1^0&-&\bar x_5^0&\bar x_3^0&x_4^0&x_1^0
	\end{array},
	$$
$\cA_4=\cbc{x_7}$, $\cN_4=\cbc{x_1,\ldots,x_6}$, and $\cZ_4=\cbc{1,4}$.
As $\sigma_4$ is satisfying the process stops and $T=4$.
\qed
\end{example}

To 
trace the process {\bf PI0}--{\bf PI3} over time we define a filtration $(\cF_t)_{t\geq0}$ by letting $\cF_t$ be the $\sigma$-algebra
generated by the random variables $i_s,j_s$ and $\pi_s(i,j)$ with $s\leq t$ and $(i,j)\in\brk m\times\brk k$.
Then intuitively, a random variable $X$ is $\cF_t$-measurable if its value is determined by the first $t$ steps of the process {\bf PI0--PI3}.
In particular, we have the following.

\begin{fact}\label{Fact_measurable}
For any $t\geq1$, any $x\in V$, and any $i\in\brk m$ the events $\cbc{\sigma_t(x)=1}$,
	$\cbc{\mbox{$\PHI_i$ is satisfied under $\sigma_t$}}$,
		$\cbc{x\in\cA_t}$, $\cbc{i\in\cZ_t}$, $\cbc{x\in\cN_t}$, and $\cbc{T=t}$ are
$\cF_t$-measurable.
\end{fact}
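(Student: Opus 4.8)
The statement (Fact~\ref{Fact_measurable}) asserts that several natural events associated to the process {\bf PI0}--{\bf PI3} are $\cF_t$-measurable, where $\cF_t$ is generated by all the random variables $i_s,j_s,\pi_s(\cdot,\cdot)$ with $s\le t$. The guiding principle is that everything produced by the first $t$ rounds of the process is a deterministic function of the data that $\cF_t$ records, so the proof is essentially an unwinding of the definitions in Figure~\ref{Fig_PI} together with an induction on $t$. First I would make precise what `$\cF_t$ determines' means: a random variable $X$ is $\cF_t$-measurable iff it can be written as $g(i_1,\dots,i_t,j_1,\dots,j_t,(\pi_s(i,j))_{s\le t,(i,j)\in\brk m\times\brk k})$ for some fixed measurable $g$. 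So it suffices, for each listed event, to exhibit such a function (a rule for reading off the answer from the recorded data).

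\emph{The induction.} I would prove by induction on $t\ge 1$ the joint claim that $\sigma_t$, $\cA_t$, $\cN_t$, $\cZ_t$ are all $\cF_t$-measurable; the six listed events then follow immediately. For the base case $t=1$: Step~{\bf PI1} constructs $\sigma_1$ from $\sigma_0$ (a fixed assignment), $i_1$, $j_1$, and the literal $\PHI_{i_1 j_1}$; but by construction $|\PHI_{i_1 j_1}|\in\cA_1\cup\cN_1$, so $\pi_1(i_1,j_1)=\PHI_{i_1 j_1}$ is one of the generators of $\cF_1$, hence $\sigma_1$ is $\cF_1$-measurable. Step~{\bf PI2}'s while-loop tests, for each $i$, whether $\PHI_i$ is $(\cA_0\cup\cN_1\cup\{|\PHI_{i_1 j_1}|\})$-negative and whether it has $\ge k_1$ (resp.\ $>\lambda$) flipped (resp.\ $\cN_1$-) occurrences; each such test only inspects $\pi_1(i,j)$ for the relevant $(i,j)$ — note that whenever a variable is needed to evaluate the test, it has already been placed in $\cA$ or $\cN$ and so is fully revealed in $\pi_1$ — and the loop deterministically adds indices to $\cZ_1$ and variables to $\cN_1$. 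Hence $\cZ_1,\cN_1$, and then $\cA_1$ via {\bf PI3}, are $\cF_1$-measurable. For the inductive step $t\mapsto t+1$, the same reading-off argument applies verbatim, now using that $\cA_t,\cN_t,\cZ_t$ are $\cF_t$-measurable by hypothesis (hence $\cF_{t+1}$-measurable), that $\sigma_t$ is $\cF_t$-measurable so the set of clauses unsatisfied under $\sigma_t$ — the set {\bf PI1} samples $i_{t+1}$ from — is $\cF_t$-measurable, and that every literal inspected during round $t+1$ is by design revealed in $\pi_{t+1}$.

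\emph{The listed events.} Given the induction, $\{\sigma_t(x)=1\}=\{\Phi:\sigma_t(\Phi)(x)=1\}$ is $\cF_t$-measurable; $\{\PHI_i\text{ satisfied under }\sigma_t\}$ depends on $\sigma_t$ and on the literals $\PHI_{ij}$, but one checks that whether $\PHI_i$ is satisfied is in fact determined by the \emph{signs} $\pi_0(i,j)$ restricted to positions not yet flipped together with the revealed literals at flipped positions — or, more simply, by the following observation: the truth value of $\PHI_{ij}$ under $\sigma_t$ equals $\sign(\PHI_{ij})$ unless $|\PHI_{ij}|$ has been flipped an odd number of times, and in the latter case $|\PHI_{ij}|\in\cA_t\cup\cN_t$ so $\pi_t(i,j)=\PHI_{ij}$ is recorded. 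The events $\{x\in\cA_t\},\{i\in\cZ_t\},\{x\in\cN_t\}$ are immediate from measurability of $\cA_t,\cZ_t,\cN_t$. Finally $\{T=t\}=\{\sigma_t\text{ satisfies }\PHI\}\setminus\{\sigma_{t-1}\text{ satisfies }\PHI\}$ (with the convention that $\sigma_t=\sigma_{t-1}$ once $T<t$), a difference of $\cF_t$-measurable events.

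\emph{Main obstacle.} There is no deep difficulty here; the one point requiring genuine care — and the only place the argument could go wrong — is verifying the \emph{self-consistency} of the ``reading-off'' claim: namely that every literal that the process {\bf PI1}--{\bf PI3} must inspect during round $t$ has already been recorded in $\pi_t$ (equivalently, lies in $\cA_t\cup\cN_t$). For {\bf PI1} this is the clause $\PHI_{i_t}$ itself and the flipped variable $|\PHI_{i_t j_t}|$, which {\bf PI3} puts into $\cA_t\cup\cN_t$; for the while-loop in {\bf PI2} one must observe that the membership tests ``$|\PHI_{ij}|\in\cA_{t-1}\cup\cdots$'' and ``$\PHI_i$ is $S$-negative'' only ever need the identity of a variable when that variable is already in the relevant (already-revealed) set, so no un-revealed literal is consulted; and when a new clause is added to $\cZ_t$, all $k$ of its variables are revealed before the loop continues. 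Writing this out carefully — essentially a line-by-line audit of Figure~\ref{Fig_PI} — is the substance of the proof, and it is routine. \qed
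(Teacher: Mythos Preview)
Your proposal is correct; the inductive ``replay the process from the recorded data'' argument works, and you have correctly identified the one delicate point (that every literal the while-loop in {\bf PI2} consults is already revealed in $\pi_t$). However, your route differs from the paper's in a way worth noting.

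The paper does \emph{not} induct on $t$. Instead it gives, for each of the sets $\cZ_t,\cN_t,\cA_t$ and for the truth values $\sigma_t(x)$, an explicit closed-form description purely in terms of the generators of $\cF_t$. Concretely: $\sigma_t(x)=1$ iff $\abs{\{s\le t:|\pi_t(i_s,j_s)|=x\}}$ is even; $i\in\cZ_t$ iff $\pi_t(i,j)\notin\{-1,1\}$ for every $j\in\brk k$; $\cN_t=\{|\pi_t(i,j)|:i\in\cZ_t,\,j\in\brk k\}$; and $\cA_t=\{|\pi_t(i_s,j_s)|:1\le s\le t\}\setminus\cN_t$. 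Measurability of each event is then immediate, with no need to re-run the loop or appeal to an inductive hypothesis. Your approach is more conceptual (``the state machine is deterministic given its transcript''), and would generalize painlessly to variants of the process; the paper's approach is terser and, more importantly, yields concrete identities---in particular the characterization $i\in\cZ_t\Leftrightarrow\forall j:\pi_t(i,j)\neq\pm1$ and the description of $\cD_t$ as $\{i:\forall j\ \pi_t(i,j)\neq1\}$---that are reused verbatim later in the analysis. So while your argument is sound, it leaves you without these formulas, which you would then have to derive separately when they are needed downstream.
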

\begin{proof}
The construction in steps {\bf PI2} and {\bf PI3} ensures that for any $t\geq1$ we have $\PHI_{i_tj_t}\in\cA_t\cup\cN_t$
	and thus $\pi_t(i_t,j_t)=\PHI_{i_tj_t}$
This implies that for any variable $x\in V$ the event $\cbc{\sigma_t(x)=1}$ is $\cF_t$-measurable.
In fact, we have $\sigma_t(x)=1$ iff the number $\abs{\cbc{1\leq s\leq t:|\pi_t\bc{i_s,j_s}|=x}}$ of times
	$x$ has been flipped is even 	(because $\sigma_0$ is the all-true assignment).

This implies that for any $i\in\brk m$ the event $\cbc{\mbox{$\PHI_i$ is satisfied under $\sigma_t$}}$ is $\cF_t$-measurable.
In fact, if there is an index $j\in\brk k$ such that $\pi_t(i,j)=1$, then $\PHI_{ij}$ is a positive literal whose underlying
variable has not been flipped before, whence $\sigma_t$ satisfies $\PHI_i$.
Moreover, if there is an index $j\in\brk k$ such that $\PHI_{ij}\neq\pm1$, then by the previous paragraph
the event that the literal $\PHI_{ij}=\pi_t(i,j)$ is true under $\sigma_t$ is $\cF_t$-measurable.
If there is such a satisfied literal $\PHI_{ij}$, then $\PHI_i$ is satisfied.
Conversely, if there is no $j\in\brk k$ such that either $\pi_t(i,j)=1$ or $\pi_t(i,j)$ is a literal that is satisfied under $\sigma_t$, then
clause $\PHI_i$ is unsatisfied.
Hence, the event $\cbc{\sigma_t\mbox{ is satisfying}}$ is $\cF_t$-measurable as well, and therefore
so is the event $\cbc{T=t}$.

Furthermore, observe that $i\in\cZ_t$ iff for all $j\in\brk k$ we have $\pi_t(i,j)\not\in\cbc{-1,1}$.
For if $i\in\cZ_t$, then for all $j\in\brk k$ we have $|\PHI_{ij}|\in\cN_t$ and thus $\pi_t(i,j)=\PHI_{ij}\neq\pm1$ due to {\bf PI3}.
Conversely, if $k\geq k_0$ is large enough, any $i\in\brk k$ such that $\pi_t(i,j)\not\in\cbc{-1,1}$
for all $j\in\brk k$ must satisfy one of the two conditions that lead {\bf PI2} to add $i$ to $\cZ_t$.
Hence, for any $i\in\brk m$ the event $\cbc{i\in\cZ_t}$ is $\cF_t$-measurable.
As by construction $\cN_t=\cbc{\pi_t(i,j):i\in\cZ_t,j\in\brk k}$, we conclude that for any variable $x\in V$
the event $\cbc{x\in\cN_t}$ is $\cF_t$-measurable.

Finally, the construction in {\bf PI3} ensures that $\cA_t=\cbc{|\pi_t(i_s,j_s)|:1\leq s\leq t}\setminus\cN_t$.
As for any $x$ the events $\cbc{x\in\cbc{|\pi_t(i_s,j_s)|:1\leq s\leq t}}$ and $\cbc{x\in\cN_t}$ are $\cF_t$-measurable,
so is the event $\cbc{x\in\cA_t}$.
\qed\end{proof}

If $\pi_t(i,j)=\pm1$, then up to time $t$ the process {\bf PI0}--{\bf PI3} has only
taken the sign of the literal $\PHI_{ij}$ into account, but has been oblivious to the underlying variable.
The only conditioning is that $|\PHI_{ij}|\not\in \cA_t\cup\cN_t$ (because otherwise {\bf PI3} would have replaced the $\pm1$ by the actual literal).
Since the input formula $\PHI$ is random, this implies that $|\PHI_{ij}|$ is uniformly distributed over $V\setminus(\cA_t\cup\cN_t)$.
In fact, for all $(i,j)$ such that $\pi_t(i,j)=\pm1$ the underlying variables are independently uniformly distributed over $V\setminus(\cA_t\cup\cN_t)$.
Formally, we can state this key observation as follows.

\begin{fact}\label{Fact_iid}
Let $t\geq0$.
Let $\cE_t$ be the set of all pairs $(i,j)$ such that $\pi_t(i,j)\in\{-1,1\}$.
The conditional joint distribution of the variables $(|\PHI_{ij}|)_{(i,j)\in\cE_t}$ given $\cF_t$ is uniform over $(V\setminus(\cA_t\cup\cN_t))^{\cE_t}$.
That is, 
for any map $f:\cE_t\rightarrow V\setminus (\cA_t\cup\cN_t)$ we have
	$$\pr\brk{\forall (i,j)\in\cE_t:|\PHI_{ij}|=f(i,j)|\cF_t}=|V\setminus(\cA_t\cup\cN_t)|^{-|\cE_t|}.$$
\end{fact}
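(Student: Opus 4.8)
The plan is to prove Fact~\ref{Fact_iid} by induction on $t$, tracking exactly which bits of information about the variables $(|\PHI_{ij}|)$ the process {\bf PI0--PI3} has ``consumed'' by time $t$. The key structural observation is that the $\sigma$-algebra $\cF_t$ is generated by (a) the signs $\sign(\PHI_{ij})$ of \emph{all} literals, which were revealed at time $0$ and carry no information about the underlying \emph{variables}; (b) the identities of the variables $|\PHI_{ij}|$ for exactly those pairs $(i,j)$ with $|\PHI_{ij}|\in\cA_t\cup\cN_t$, i.e.\ the pairs \emph{not} in $\cE_t$; and (c) the random loop-choices $i_s,j_s$ for $s\le t$, which at each step are measurable functions of $\sigma_{s-1}$ and a fresh independent uniform pick and hence add nothing about the variables underlying $\cE_t$-pairs beyond what is already in (b). Since the input formula is uniform over $\Omega_k(n,m)=(2n)^{km}$, conditioning only on (a)+(b)+(c) leaves the remaining variables $(|\PHI_{ij}|)_{(i,j)\in\cE_t}$ exchangeable and uniform over the variables not yet ``used up'', namely $V\setminus(\cA_t\cup\cN_t)$.

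Concretely I would argue as follows. For $t=0$ we have $\cA_0=\cN_0=\emptyset$, $\cE_0=\brk m\times\brk k$, and $\cF_0$ is generated purely by the signs $\sign(\PHI_{ij})$; since under the uniform measure on $\Omega_k(n,m)$ the sign and the underlying variable of each literal are independent, and the underlying variables are i.i.d.\ uniform on $V$, the claim holds. For the inductive step, assume the statement at time $t-1$ and run through {\bf PI1--PI3}. In {\bf PI1} the indices $i_t,j_t$ are chosen as a function of $\sigma_{t-1}$ (which is $\cF_{t-1}$-measurable by Fact~\ref{Fact_measurable}) together with independent fresh randomness, so conditioning on them does not disturb the uniformity of the $\cE_{t-1}$-variables \emph{other than} $|\PHI_{i_tj_t}|$ itself; for that single pair, by the induction hypothesis $|\PHI_{i_tj_t}|$ is uniform over $V\setminus(\cA_{t-1}\cup\cN_{t-1})$, so revealing it is exactly ``drawing one uniform sample without replacement'', and the remaining $\cE_{t-1}$-variables stay uniform over $V\setminus(\cA_{t-1}\cup\cN_{t-1}\cup\{|\PHI_{i_tj_t}|\})$. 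Steps {\bf PI2--PI3} iterate the same move: each time an index $i_{\min}$ is added to $\cZ_t$, the not-yet-revealed variables of $\PHI_{i_{\min}}$ are read off one at a time, each uniform over the current complement of $\cA\cup\cN$, and added to $\cN_t$. Since the while-loop in {\bf PI2} only inspects signs (the ``$S$-negative'' condition) and membership of already-revealed variables in $\cA_{t-1}\cup\cN_t$ — all $\cF$-measurable data of type (a)/(b) — its halting and its choices of $i_{\min}$ reveal nothing further about the $\cE_t$-variables. Finally $\cE_t=\{(i,j):|\PHI_{ij}|\notin\cA_t\cup\cN_t\}$ by the definition of $\pi_t$ in {\bf PI3}, so after all these without-replacement draws the conditional law of $(|\PHI_{ij}|)_{(i,j)\in\cE_t}$ is uniform over $(V\setminus(\cA_t\cup\cN_t))^{\cE_t}$, which is the assertion.

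The step I expect to require the most care is making rigorous the claim that the loop-choices and the while-loop decisions are ``measurable functions of signs and already-revealed variables plus fresh independent randomness'', so that conditioning on them is harmless. The cleanest way is to note that $\cF_t$ is generated by $\{i_s,j_s:s\le t\}$ and $\{\pi_s(i,j):s\le t\}$, that $\pi_s(i,j)$ for $(i,j)\in\cE_t$ equals $\sign(\PHI_{ij})$ for every $s\le t$ (by minimality of the revealing process — once a variable enters $\cA\cup\cN$ it never leaves $\cA\cup\cN$, so a pair in $\cE_t$ was never revealed at any earlier time either), and that $\sign(\PHI_{ij})$ is independent of $|\PHI_{ij}|$ under $\pr$. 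Hence the generators of $\cF_t$ split into a part that is a deterministic function of $\{\sign(\PHI_{ij})\}\cup\{|\PHI_{ij}|:(i,j)\notin\cE_t\}\cup\{\text{the independent uniform tie-breaks in PI1}\}$ and nothing else; a short formal computation with the uniform measure then gives the stated identity $\pr[\,\forall(i,j)\in\cE_t:|\PHI_{ij}|=f(i,j)\mid\cF_t\,]=|V\setminus(\cA_t\cup\cN_t)|^{-|\cE_t|}$ for every admissible $f$.
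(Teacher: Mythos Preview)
The paper does not actually prove Fact~\ref{Fact_iid}; it is stated as a ``Fact'' and justified only by the informal paragraph preceding it (``If $\pi_t(i,j)=\pm1$, then up to time $t$ the process {\bf PI0}--{\bf PI3} has only taken the sign of the literal $\PHI_{ij}$ into account\ldots''). Your inductive argument is correct and supplies precisely the rigorous deferred-decisions justification that the paper leaves implicit.

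The one place where your wording could be sharpened is the claim that the {\bf PI2} while-loop ``only inspects signs and membership of already-revealed variables''. Strictly speaking, the loop queries \emph{every} pair $(i,j)$ to decide whether $|\PHI_{ij}|$ lies in the currently growing set $\cA_{t-1}\cup\cN_t\cup\{|\PHI_{i_tj_t}|\}$, including pairs that end up in $\cE_t$. The reason this is harmless is the one you identify in your third paragraph via monotonicity: for any pair that ends up in $\cE_t$, the answer to every such membership query made during the construction up to time $t$ is ``no'', so the loop's trajectory is in fact a deterministic function of the signs, the values $|\PHI_{ij}|$ for $(i,j)\notin\cE_t$, and the fresh {\bf PI1} randomness. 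Conditioning on all of this together with the event $\{|\PHI_{ij}|\notin\cA_t\cup\cN_t\mbox{ for all }(i,j)\in\cE_t\}$ then leaves the $\cE_t$-variables i.i.d.\ uniform over $V\setminus(\cA_t\cup\cN_t)$, exactly as you conclude.
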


Let
	$$T^*=\theta n\qquad\mbox{with }\theta=0.38/k.$$
Our overall goal is to prove that the stopping time of the process {\bf PI0--PI3} satisfies
	$T\leq T^*$ \whp\
(The number $\theta$ is chosen somewhat arbitrarily; for the analysis to work it seems to be essential that $\theta=c/k$ for some $c>0$ that is neither ``too small''
		nor ``too large''.
	The concrete constant above happens to work.)
To prove this, we will define non-negative random variables $S_t,H_t$
such that $S_t+H_t=0$ implies that $\sigma_t$ is a satisfying assignment.
We will then trace $S_t,H_t$ for $1\leq t\leq T^*$.

For any $t\geq1$ let
	$$\cD_t=\cbc{i\in\brk m:\mbox{$\PHI_i$ is $\cA_t\cup\cN_t$-negative}}.$$
As {\bf PI3} ensures that $\PHI_i$ is $\cA_t\cup\cN_t$-negative iff $\pi_t(i,j)\neq1$ for all $j\in\brk k$,
the event $\cbc{i\in\cD_t}$ is $\cF_t$-measurable for any $i\in\brk m$.
We define
	\begin{eqnarray}\label{eqSt}
	S_0=|\cD_0|&\mbox{and}&S_t=|\cD_t|-\abs{\cA_t}\mbox{ for }t\geq1.
	\end{eqnarray}
Any clause $\PHI_i$ with $i\not\in\cD_t$ is satisfied under $\sigma_t$.
	For if $j\in\brk k$ is such that $\pi_t(i,j)=1$, then $\PHI_{ij}$ is a positive literal and $\sigma_t(\PHI_{ij})=1$,
		because \Walksat\ starts with the all-true assignment $\sigma_0$ and the variable $\PHI_{ij}$ has not been flipped up to time $t$.
Clearly, in order to study the random variable $S_t$ it is crucial to estimate $\abs{\cD_t}$.
This is the purpose of the following proposition, whose proof we defer to \Sec~\ref{Sec_D}.

\begin{proposition}\label{Prop_D}
\Whp\ we have $|\cD_t|\leq2^{2-k}m$ for all $t\leq T^*$.
\end{proposition}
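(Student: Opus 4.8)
The plan is to show that $|\cD_t|$ stays below $2^{2-k}m$ for all $t\le T^*$ by controlling the two ways in which a clause can enter $\cD_t$: either it is all-negative to begin with (so it lies in $\cD_0$) or it becomes $\cA_t\cup\cN_t$-negative only because enough of its positive literals have had their underlying variables absorbed into $\cA_t\cup\cN_t$. First I would handle $t=0$: since $\cD_0$ is the set of all-negative clauses and $\PHI$ has $m$ independent clauses, $|\cD_0|\sim\Bin(m,2^{-k})$, so $|\cD_0|\le 1.1\cdot 2^{-k}m\le 2^{2-k}m$ \whp\ by a Chernoff bound. For $t\ge1$, the key point is that $\cA_t\cup\cN_t$ has size at most $O(kT^*)=O(n/3)$—actually much smaller: $|\cA_t|\le t\le T^*=\theta n$ and $|\cN_t|$ is controlled via the expansion Lemma~\ref{Lemma_core} (with the chosen $\lambda=\sqrt k$, $\eps=\exp(-k^{2/3})$), which caps $|\cZ_t|$ and hence $|\cN_t|$ at $O(\eps n)$. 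So throughout the run, $|\cA_t\cup\cN_t|\le 2\theta n = \tfrac{2n}{3k}$, say.

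Next I would run a union-bound over all ways a clause can be forced into $\cD_t$. For a clause $\PHI_i$ to be $\cA_t\cup\cN_t$-negative at some time $t\le T^*$ without being all-negative, it must have at least one positive literal, and every positive literal $\PHI_{ij}$ must satisfy $|\PHI_{ij}|\in\cA_t\cup\cN_t$. Using Fact~\ref{Fact_iid}, conditionally on $\cF_t$ the underlying variables at the unrevealed ($\pm1$) positions are i.i.d.\ uniform over $V\setminus(\cA_t\cup\cN_t)$, so this is really a statement about the \emph{final} sets: the number of clauses that are ever $\cA_{T^*}\cup\cN_{T^*}$-negative is what we bound. Write $W=\cA_{T^*}\cup\cN_{T^*}$, with $|W|\le w:=\tfrac{2n}{3k}$. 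For a fixed set $W$ of size $w$, the probability that a given clause $\PHI_i$ has all its positive literals inside $W$ is, by independence of the $k$ literal-slots,
\[
\sum_{p=0}^{k}\binom{k}{p}2^{-k}\Big(\tfrac{|W|}{n}\Big)^{p}=2^{-k}\Big(1+\tfrac{|W|}{n}\Big)^{k}\le 2^{-k}\exp(k|W|/n)\le 2^{-k}\exp(2/3)\le 2\cdot 2^{-k}.
\]
Hence for a fixed $W$ the number of such clauses is stochastically dominated by $\Bin(m,2\cdot 2^{-k})$, which is at most $2^{2-k}m$ \whp\ by Chernoff. The remaining issue is the union over the choices of $W$: naively there are $\binom{n}{w}$ of them, which is too many to absorb a Chernoff bound of the form $\exp(-c\,2^{-k}m)=\exp(-\Theta(\rho n/k))$ against $\exp(\Theta(w\log(n/w)))=\exp(\Theta(n\log k/(3k)))$. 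To fix this I would not union-bound over all $W$ but only over the $\cA$-part and the $\cN$-part separately and exploit that $\cN_{T^*}$ is generated by the clauses in $\cZ_{T^*}$: there are only $\binom{m}{|\cZ_{T^*}|}\le\binom{m}{\eps n}$ choices for $\cZ_{T^*}$ and each determines $\cN_{T^*}$, and $|\cA_{T^*}|\le T^*$ positions are chosen adaptively but lie among variables touched by \Walksat, whose identities are revealed one at a time. The cleaner route, and the one I would actually take, is to fix the \emph{count} rather than the set: condition on $\cF_{T^*}$, note $|W|\le w$ is $\cF_{T^*}$-measurable, and apply Fact~\ref{Fact_iid} directly to the clauses not yet fully revealed—each unrevealed positive-sign slot independently lands in $W$ with probability $\le |W|/(n-|W|)\le 2w/n$, so the conditional number of newly-$\cD$ clauses is dominated by $\Bin(m,2\cdot 2^{-k})$ given $\cF_{T^*}$, with no union bound over $W$ at all. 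Combined with $|\cD_0|\le 1.1\cdot 2^{-k}m$ this gives $|\cD_{T^*}|\le 2^{2-k}m$ \whp; monotonicity-type considerations or re-running the argument at each $t$ (there are only $T^*=\poly(n)$ values, and the bad event at each is $e^{-\Omega(n/k)}$, so a union bound over $t$ costs nothing) then yields the bound for all $t\le T^*$ simultaneously.

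The main obstacle, as indicated, is the union bound over which variables end up in $\cA_t\cup\cN_t$: the error probability from concentration of $|\cD_t|$ is only $\exp(-\Theta(2^{-k}m))=\exp(-\Theta(\rho n/k))$, and this is not enough to pay for $\binom{n}{n/(3k)}$ choices of that set. Overcoming it requires using Fact~\ref{Fact_iid} to condition on $\cF_{T^*}$ \emph{before} counting—so that $\cA_t\cup\cN_t$ is frozen and no union bound over its identity is needed—rather than treating the set as adversarial. A secondary technical point is verifying that the hypotheses of Lemma~\ref{Lemma_core} are met by $\lambda=\sqrt k$ and $\eps=\exp(-k^{2/3})$ (one checks $\eps\le k^{-3}$ and $\eps^\lambda\le\tfrac1\eul(2\eul)^{-4k}$ for large $k$, since $\eps^{\lambda}=\exp(-\sqrt k\cdot k^{2/3})=\exp(-k^{7/6})$ decays far faster than $(2\eul)^{-4k}$), which then guarantees $|\cN_t|\le k\cdot|\cZ_t|\le k\eps n=o(n)$ and hence $|\cA_t\cup\cN_t|\le T^*+k\eps n\le 2\theta n$ as used above.
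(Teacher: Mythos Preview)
Your ``cleaner route'' contains a genuine gap that makes the argument circular. You propose to condition on $\cF_{T^*}$ and then use Fact~\ref{Fact_iid} to bound the probability that an unrevealed positive-sign slot lands in $W=\cA_{T^*}\cup\cN_{T^*}$. But Fact~\ref{Fact_iid} says precisely that the variables at positions with $\pi_{T^*}(i,j)=\pm1$ are uniform over $V\setminus W$; in particular they land in $W$ with probability \emph{zero}, not $|W|/(n-|W|)$. Conversely, the positions that \emph{do} land in $W$ are exactly those with $\pi_{T^*}(i,j)\in L$, and these are already determined by $\cF_{T^*}$. The upshot is that $\cD_{T^*}$ is $\cF_{T^*}$-measurable (the paper states this explicitly just after defining $\cD_t$), so once you condition on $\cF_{T^*}$ there is no randomness left and nothing to bound. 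Your earlier union-bound-over-$W$ approach fails for the reason you identified, and the alternative you offer in its place does not work either.

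What the paper actually does is track the process \emph{through time}. It introduces indicator variables $K_t^*(i,j)$ and $K_t^0(i,j)$ recording whether the positive literal at position $(i,j)$ is absorbed into $\cA_t$ (respectively $\cN_t$) precisely at step $t$, and bounds their conditional expectations given $\cF_{t-1}$ (Corollary~\ref{Cor_Kt}). The event $\{|\cD_{T^*}|>2^{2-k}m\}$ is then decomposed by specifying, for each positive literal of each offending clause, the exact time $\tau^*(i,j)$ or $\tau^0(i,j)$ at which it was absorbed, together with the increment profile $\vec\delta=(\abs{\cZ_t\setminus\cZ_{t-1}})_t$. Lemma~\ref{Lemma_filt} multiplies the step-by-step conditional bounds along the filtration, and the resulting product is summed over all choices of $M,Q^*,Q^0,\tau^*,\tau^0,\vec\delta$. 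The point is that by working with the conditional probabilities at each step $t$ given $\cF_{t-1}$, one is always applying Fact~\ref{Fact_iid} at a moment when the relevant slot is \emph{still} unrevealed, so the randomness is genuinely there; this is exactly what your conditioning on $\cF_{T^*}$ throws away.
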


To define the random variables $H_t$, let us
call an assignment $\tau:\cN_t\ra\cbc{0,1}$ \emph{rich} for $\cZ_t$
if in each clause $\PHI_i$ with $i\in\cZ_t$ at least $0.8k$ literals $\PHI_{ij}$ are satisfied under $\tau$.

\begin{proposition}\label{Prop_tau}
\Whp\ there is a sequence $(\tau_t)_{1\leq t\leq T^*}$ with the following properties.
\begin{enumerate}
\item For any $1\leq t\leq T^*$, $\tau_t$ is a rich assignment for $\cZ_t$.
\item For any $1<t\leq T^*$ and any $x\in\cN_{t-1}$ we have $\tau_t(x)=\tau_{t-1}(x)$.
\end{enumerate}
Moreover, $\tau_t$ is $\cF_t$-measurable for all $t$.
\end{proposition}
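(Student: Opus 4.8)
The plan is to build the sequence $(\tau_t)_{1\leq t\leq T^*}$ incrementally, adding constraints only when $\cZ_t$ grows, and to control the whole process by a single first-moment computation over the combinatorial structure that $\cZ_t$ can possibly have. The key structural fact is that membership in $\cZ_t$ is driven, via step {\bf PI2}, by clauses that are either `heavily conditioned' (at least $k_1=0.49k$ flipped variables) or that share at least $\lambda=\sqrt k$ variables with earlier $\cZ$-clauses. Since $T^*=\theta n$ with $\theta=1/(3k)$ and each flip adds at most one clause to $\cZ_t$ directly, while the recursive closure in {\bf PI2} can only be triggered by clauses that reuse many already-revealed variables, Lemma~\ref{Lemma_core} (applied with our choice $\eps=\exp(-k^{2/3})$, which satisfies $\eps\leq k^{-3}$ and $\eps^\lambda\leq\frac1\eul(2\eul)^{-4k}$ for large $k$) bounds $|\cZ_{T^*}|$ and hence $|\cN_{T^*}|$ by $O(\eps n)$ \whp. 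First I would record this: \whp, for all $t\leq T^*$ we have $|\cZ_t|\leq\eps n$ and $|\cN_t|\leq k\eps n$.

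Next, condition on the event of Lemma~\ref{Lemma_Hall}, i.e.\ that for every $Z\subset\brk m$ with $|Z|\leq n/k^2$ there is a $0.9k$-fold matching from $\PHI_Z$ to $N(\PHI_Z)$; since $|\cZ_{T^*}|\leq\eps n\leq n/k^2$, this applies to every $\cZ_t$. The matching gives, for each clause in $\cZ_t$, a private set of $0.9k$ distinct variables in $\cN_t$, one for each of $0.9k$ of its literal-slots, with no variable serving two clauses. Now I want an assignment $\tau_t:\cN_t\to\cbc{0,1}$ that satisfies, in each $\cZ_t$-clause, at least $0.8k$ of its literals. A clause can fail to be satisfied in a matched slot only if $\tau_t$ disagrees with the sign of that literal; variables outside the matching are free. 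The obstacle is that the \emph{signs} of the literals in $\cZ_t$-clauses are not free — they were revealed by the process, precisely because these clauses were $\cA_{t-1}\cup\cN_t\cup\cbc{\cdot}$-negative, so every positive literal in such a clause sits on a variable of $\cN_t$. Hence I would instead argue by a direct first-moment/union bound: over all candidate pairs $(Z,\text{matching})$ and all `bad' assignments of the matched variables, the probability that a fixed $Z$ of size $z\leq\eps n$ has the property that \emph{no} $\tau$ is rich for it is exponentially small, because for each clause the literal signs on its $0.9k$ matched slots are determined by the independent coin flips underlying $\PHI$, and for a uniformly random sign pattern the probability that a clause cannot be made $0.8k$-satisfied (i.e.\ fewer than $0.8k$ of its $0.9k$ matched literals can be simultaneously set true, which for a matching is impossible unless the signs themselves force a contradiction — they cannot, since the variables are private) is in fact zero. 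The real content is therefore monotonicity.

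The monotone construction is the crux of part~(2). Since $\cZ_t$ and $\cN_t$ are nondecreasing in $t$ (steps {\bf PI2}--{\bf PI3} only add to them), I would define $\tau_t$ by extending $\tau_{t-1}$: keep $\tau_t(x)=\tau_{t-1}(x)$ for $x\in\cN_{t-1}$, and for each newly added variable $x\in\cN_t\setminus\cN_{t-1}$ choose its value to satisfy its own matched literal in the (unique, by the matching) new $\cZ_t$-clause containing it, breaking remaining freedom arbitrarily. The danger is that an \emph{old} clause $\PHI_i\in\cZ_{t-1}$, already handled by $\tau_{t-1}$, might lose satisfied literals when we — we do not: $\tau_t$ agrees with $\tau_{t-1}$ on $\cN_{t-1}\supseteq N(\PHI_i)$, so $\PHI_i$'s satisfied-literal count is unchanged. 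The remaining worry is a clause $\PHI_i$ entering $\cZ_t$ at time $t$ whose variable set intersects $\cN_{t-1}$ heavily: the already-fixed values $\tau_{t-1}$ on $N(\PHI_i)\cap\cN_{t-1}$ might kill too many of its literals. This is exactly where the $0.9k$-fold matching of Lemma~\ref{Lemma_Hall}, applied to all of $\cZ_t$ at once, saves us: each $\cZ_t$-clause gets $0.9k$ \emph{private} variables, and at most $0.1k$ of its slots are on shared/previously-fixed variables, so even in the worst case at least $0.9k-0.1k=0.8k$ literals can be — more carefully, choose $\tau_t$ on the private variables of $\PHI_i$ to match its signs there, giving $0.9k$ satisfied literals, of which we might pessimistically lose none, so $\geq 0.8k$ holds with room to spare. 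I expect the main obstacle to be precisely the bookkeeping that the matching is \emph{simultaneously} valid for the growing family $\cZ_1\subset\cZ_2\subset\cdots$, so that a variable's role as `private to clause $\PHI_i$' is fixed once and never contested; this follows by restricting a single $0.9k$-fold matching for $\cZ_{T^*}$ to each $\cZ_t$, which is why I would invoke Lemma~\ref{Lemma_Hall} for the final set $\cZ_{T^*}$ and work downward. Assembling: on the intersection of the (\whp) events from Lemmas~\ref{Lemma_Hall} and~\ref{Lemma_core}, the construction above produces the desired sequence, proving the proposition. \qed
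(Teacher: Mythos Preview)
Your proposal contains a correct core idea that is actually cleaner than the paper's argument, but the writeup has two real problems.

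\medskip
\textbf{First, the bound on $|\cZ_t|$.} Your opening paragraph claims that Lemma~\ref{Lemma_core} alone yields $|\cZ_{T^*}|\leq\eps n$, because ``each flip adds at most one clause to $\cZ_t$ directly''. This is false: a single flip can trigger the first bullet of {\bf PI2} for many clauses at once, and in any case $T^*=n/(3k)\gg\eps n$, so even one clause per flip would not suffice. Bounding the number of clauses added via the first bullet is precisely the hard part; it is the content of Proposition~\ref{Prop_Z}, whose proof (Section~\ref{Sec_Z}) requires the active/passive decomposition and \Lem s~\ref{Lemma_active}--\ref{Lemma_passive}. Lemma~\ref{Lemma_core} handles only the recursive closure under the second bullet. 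The paper's proof of Proposition~\ref{Prop_tau} explicitly \emph{assumes} Proposition~\ref{Prop_Z}; you should do the same rather than claim to derive it from Lemma~\ref{Lemma_core}.

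\medskip
\textbf{Second, the construction of $\tau_t$.} Your final idea --- take a single $0.9k$-fold matching $M$ for $\cZ_{T^*}$ and restrict --- is correct and in fact simpler than the paper's: define $\tau:\cN_{T^*}\to\{0,1\}$ once by setting each matched variable to satisfy its (unique) matched literal, then put $\tau_t=\tau|_{\cN_t}$. Property~2 is immediate, and every $i\in\cZ_t\subset\cZ_{T^*}$ has all $0.9k$ of its matched literals satisfied by $\tau_t$, giving property~1. But your writeup does not say this. You describe an \emph{incremental} construction (``for each newly added variable $x\in\cN_t\setminus\cN_{t-1}$ choose its value to satisfy its matched literal in the new $\cZ_t$-clause''), and then try to argue that a new clause loses at most $0.1k$ literals to ``shared/previously-fixed variables''. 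That claim is unjustified: a variable being private to clause $i$ in the matching does \emph{not} prevent it from lying in $\cN_{t-1}$ (it may appear, unmatched, in some earlier $\cZ$-clause), and if you only assign new variables incrementally you have no control over what $\tau_{t-1}$ did to such a variable. The fix is exactly to define $\tau$ globally, so that a variable matched to a future clause already gets the right value the moment it enters $\cN_t$.

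\medskip
\textbf{Comparison with the paper.} The paper avoids the global matching and instead proceeds inductively: at step $t$ it applies Lemma~\ref{Lemma_Hall} to the \emph{increment} $Z=\cZ_t\setminus\cZ_{t-1}$, obtains a $0.9k$-fold matching for $Z$, and then deletes all matching edges into $\cN_{t-1}$. The key observation you are missing is that {\bf PI2} guarantees every clause in $Z$ has at most $\lambda$ variables in $\cN_{t-1}$ (otherwise it would already be in $\cZ_{t-1}$), so at least $0.9k-\lambda\geq0.8k$ matched variables per clause survive and can be freely assigned. Your global-matching route sidesteps this $\lambda$-bookkeeping entirely and gives $0.9k$ satisfied literals rather than $0.9k-\lambda$; it is a genuine simplification, provided you state it correctly.
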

Assuming that there is a sequence $(\tau_t)_{1\leq t\leq T^*}$ as in \Prop~\ref{Prop_tau}, we define $H_0=0$ and
	$$H_t=\abs{\cbc{x\in\cN_t:\sigma_t(x)\neq\tau_t(x)}}\mbox{ for }1\leq t\leq T^*,$$
and $H_t=\abs{\cN_t}$ for $t>T^*$.
For the sake of completeness, we also let $H_t=|\cN_t|$ if there is no such sequence $(\tau_t)_{1\leq t\leq T^*}$.
The proof of \Prop~\ref{Prop_tau} hinges upon the following fact.

\begin{proposition}\label{Prop_Z}
\Whp\ we have $\abs{\cZ_t}\leq \eps n$ for all $t\leq T^*$.
\end{proposition}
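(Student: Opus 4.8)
The plan is to pass from the dynamic process {\bf PI0}--{\bf PI3} to a static combinatorial statement and then invoke \Lem~\ref{Lemma_core}. Since $(\cZ_t)_{0\le t\le T^*}$ is nondecreasing and, inside the while-loop of {\bf PI2}, grows one clause at a time, it suffices to derive a contradiction from the assumption that $\abs{\cZ_t}$ reaches $z:=\uppergauss{\eps n}$ for some $t\le T^*$. Listing the clauses ever added to $\cZ$ in order of addition as $j_1,j_2,\dots$ and putting $Z=\cbc{j_1,\dots,j_z}$, one reads off from {\bf PI2} that, writing $\cF$ for the set of variables flipped during the first $T^*$ steps (so $\abs\cF\le T^*=n/(3k)$), each $\PHI_{j_s}$ is $(\cF\cup N(\PHI_{\cbc{j_1,\dots,j_{s-1}}}))$-negative and, in addition, satisfies one of: (a) at least $k_1=0.49k$ of its variables lie in $\cF$; or (b) more than $\lambda$ of its variables lie in $N(\PHI_{\cbc{j_1,\dots,j_{s-1}}})$. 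I would then split $Z=Z_a\cup Z_b$ according to which of (a), (b) held at the time of addition (clauses satisfying both going into $Z_a$).

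The subset $Z_b$ is dealt with by \Lem~\ref{Lemma_core}. Listed in the order induced from $Z$, the clauses of $Z_b$ form exactly a sequence of the kind appearing in that lemma, with seed set equal to $Z_a$: indeed, when $j_s\in Z_b$ is appended it has more than $\lambda$ of its variables in $N(\PHI_{\cbc{j_1,\dots,j_{s-1}}})\subseteq N(\PHI_{Z_a\cup\cbc{\text{earlier }Z_b\text{-clauses}}})$, and the $Z_b$-clauses are pairwise distinct and disjoint from $Z_a$. Hence, \emph{provided one already knows} $\abs{Z_a}\le\eps n$, \Lem~\ref{Lemma_core} (whose hypotheses $\eps\le k^{-3}$ and $\eps^\lambda\le\eul^{-1}(2\eul)^{-4k}$ are met by $\eps=\exp(-k^{2/3})$, $\lambda=\sqrt k$ for $k\ge k_0$) forces $\abs{Z_b}\le\eps n$, so that $\abs Z\le 2\eps n$; running the whole argument with $\eps/2$ in place of $\eps$ turns this into the desired contradiction with $\abs Z=\uppergauss{\eps n}$. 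Note that the negativity of the $Z_b$-clauses and the variable set $\cF$ play no role here, which is why the entire difficulty is concentrated on the $Z_a$ side.

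Bounding $\abs{Z_a}$ is the crux. A deterministic observation already helps: once a variable enters $\cN$ it leaves $\cA$, so a later (a)-clause cannot reuse the $\cA$-variables of an earlier one; hence the $k_1$ witnessing flipped variables of distinct $Z_a$-clauses are pairwise disjoint, giving $k_1\abs{Z_a}\le\abs\cF\le T^*$ and therefore the crude bound $\abs{Z_a}=O(n/k^2)$. To push this down to $o(\eps n)$ I would combine this with the conditional-independence structure of Fact~\ref{Fact_iid}: conditioned on $\cF_{t-1}$, the variable flipped at step $t$ (and, more generally, any not-yet-revealed variable slot) is uniform over $V\setminus(\cA_{t-1}\cup\cN_{t-1})$, a set of $\ge n/2$ variables, and by Proposition~\ref{Prop_D} the negated set $\cA_t\cup\cN_t$ stays of size $O(n/k)$. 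One then estimates, step by step, the conditional probability that some clause currently carrying fewer than $k_1$ flipped variables is ``completed'' to an (a)-clause --- each of the $\geq1$ missing variable occurrences must be hit, contributing a factor $O(k/n)$ --- and multiplies these bounds over the at most $T^*$ steps by means of \Lem~\ref{Lemma_filt}. Here the choice $k_1=0.49k$ is essential: $0.49k$ missing slots, each contributing $O(k/n)$, together with the $\binom{k}{k_1}$-type counting and the $m=\Theta(2^k n/k)$ candidate clauses, is what makes the total probability $o(1)$, and $\eps=\exp(-k^{2/3})$ is large enough to absorb the remaining entropy while still being $\ll k^{-2}$.

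The main obstacle is precisely to make this last, conditional argument mesh with the static analysis of $Z_b$: the flipped set $\cF$ is algorithm-dependent and cannot be frozen in advance (a naive union bound over all $\binom n{\le T^*}$ candidate flipped sets is too wasteful at the scale $\eps n$), so it must be controlled dynamically via Facts~\ref{Fact_measurable}--\ref{Fact_iid} and \Lem~\ref{Lemma_filt}, whereas $Z_b$ is controlled statically via \Lem~\ref{Lemma_core}; reconciling the two, and checking that the three scales $k_1=0.49k$, $\lambda=\sqrt k$, $\eps=\exp(-k^{2/3})$ balance in every intermediate estimate, is the delicate point. Once $\abs{Z_a}\le\eps n/2$ and $\abs{Z_b}\le\eps n/2$ are in hand, the contradiction with $\abs Z=\uppergauss{\eps n}$ follows and the proposition is proved.
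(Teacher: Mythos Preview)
Your top-level decomposition matches the paper's: split the clauses entering $\cZ$ into those added because they have $\ge k_1$ flipped variables (your $Z_a$) versus the cascade handled by \Lem~\ref{Lemma_core} (your $Z_b$). You also correctly identify that the whole difficulty is to push $|Z_a|$ from the crude $O(n/k^2)$ down to $\eps n/2$; without this, \Lem~\ref{Lemma_core} cannot even be invoked for $Z_b$, since its seed set must have size $\le\eps n=\exp(-k^{2/3})\,n$.

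The genuine gap is in your sketch for bounding $|Z_a|$. You write that ``each of the $\ge1$ missing variable occurrences must be hit, contributing a factor $O(k/n)$'', but this only holds for \emph{passive} hits, where the algorithm flips a variable elsewhere that happens to occur in $\PHI_i$; there Fact~\ref{Fact_iid} does give the factor you want. A clause $\PHI_i$ can also accumulate flipped variables \emph{actively}: \Walksat\ may select $\PHI_i$ itself as the unsatisfied clause in {\bf PI1} and flip one of its own literals, and then there is no $O(k/n)$ randomness to exploit --- the hit is certain once $i_t=i$. Nothing in your argument prevents a single clause from being actively selected $k_1$ times. The paper resolves this by a further split of your $Z_a$ into $t$-active clauses ($\ge k_2=0.48k$ active hits) and $t$-passive clauses ($\ge k_3=0.01k$ passive hits); your sketch is essentially the paper's passive case (\Lem~\ref{Lemma_passive}). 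For the active case the key observation, absent from your proposal, is that if $\PHI_i$ is actively selected $k_2$ times, then after each of the first $k_2-1$ selections the flipped variable must later be flipped \emph{back} (else $\PHI_i$ stays satisfied), forcing that variable to occur positively in some other clause $\PHI_h\in\cD_t$; it is the randomness of \emph{those} auxiliary clauses that the paper exploits, via a refined filtration and a substantial union bound (\Lem~\ref{Lemma_complicated} and \Lem~\ref{Lemma_active}). Two smaller points: citing \Prop~\ref{Prop_D} here is circular, since the paper deduces it from \Prop~\ref{Prop_Z} via \Prop~\ref{Prop_DD}; and the circular dependence between $|\cZ_t|$ and $|\cN_t|$ that you leave implicit is handled in the paper by an explicit induction on $t$ (the events $Y_t$ in the proof of \Prop~\ref{Prop_Z}), which you have not set up.
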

We defer the proof of \Prop~\ref{Prop_Z} to \Sec~\ref{Sec_Z}.
Assuming \Prop~\ref{Prop_Z}, we can derive \Prop~\ref{Prop_tau} rather easily.

\medskip\noindent\emph{Proof of \Prop~\ref{Prop_tau} (assuming \Prop~\ref{Prop_Z}).}
By \Lem~\ref{Lemma_Hall}, we may assume that $\PHI$ has the expansion property~(\ref{eqHall}).
Furthermore, by \Prop~\ref{Prop_Z} we may assume that $\abs{\cZ_t}\leq\eps n$ for all $t\leq T^*$.
Under these assumptions we will construct the sequence $(\tau_t)_{1\leq t\leq T^*}$ by induction on $t\geq1$.
Thus, suppose that $1\leq t\leq T^*$ and that we have already got assignments $\tau_s$ with $1\leq s<t$ that satisfy 1.--2.

The set $Z=\cZ_t\setminus\cZ_{t-1}$ 
	of indices that $\cZ_t$ gained at time $t$
		has size $|Z|\leq\abs{\cZ_t}\leq\eps n$.
Therefore, (\ref{eqHall}) ensures that there is a $0.9k$-fold matching $M$ from $Z$ to the set
	$$N=N(\PHI_Z)=\cbc{|\PHI_{ij}|:(i,j)\in Z\times\brk k}\subset\cN_t$$
of variables that occur in the clauses $\PHI_i$ with $i\in Z$.
The construction in {\bf PI2} ensures that none of these clauses $\PHI_i$ has more than $\lambda$ occurrences of a variable from $\cN_{t-1}$
		(as otherwise $i\in\cZ_{t-1}$).
Therefore, in the matching $M'$ obtained from $M$ by omitting all edges $e=\cbc{i,x}$ with $i\in Z$ and $x\in\cN_{t-1}$
each clause $\PHI_i$ with $i\in Z$ is incident with at least $0.9k-\lambda\geq0.8k$ edges. 
Now, for each edge $e=\cbc{i,x}\in M'$ let $\tau_t(x)$ be the truth value that
makes the corresponding literal in $\PHI_i$ evaluate to true.
Furthermore, for all $y\in\cN_{t-1}$ let $\tau_t(y)=\tau_{t-1}(y)$,
and for all other variables $x'\in\cN_t$ let $\tau_t(x')=1$.
This ensures that $\tau_t$ satisfies the conditions in \Prop~\ref{Prop_tau}.
\qed

Having defined the random variables $S_t,H_t$, we are now going to verify
that they suit their intended purpose, i.e., that $S_t+H_t=0$ implies that $\sigma_t$ is satisfying.

\begin{proposition}\label{Prop_U}
Let $1\leq t\leq T^*$.
If $S_t+H_t=0$, then $\sigma_t$ is a satisfying assignment.
\end{proposition}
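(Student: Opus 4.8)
The plan is to exploit that $S_t$ and $H_t$ are non-negative, so that $S_t+H_t=0$ forces $S_t=0$, i.e.\ $\abs{\cD_t}=\abs{\cA_t}$; in fact this alone will imply that $\sigma_t$ satisfies $\PHI$ (the term $H_t$ is not strictly needed here, but it is carried along because it is the quantity that the tracking argument in the later sections controls). As observed in the discussion preceding \Prop~\ref{Prop_D}, every clause $\PHI_i$ with $i\notin\cD_t$ is already satisfied under $\sigma_t$, because it contains a positive literal whose underlying variable has never been flipped. Hence it suffices to show that every clause whose index lies in $\cD_t$ is satisfied under $\sigma_t$.

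To this end I would construct an injection $\iota\colon\cA_t\to\cD_t$ whose image consists only of clauses satisfied under $\sigma_t$. For $x\in\cA_t$, membership in $\cA_t$ means that $x$ has been flipped at least once by time $t$, so there is a last step $s=s(x)\le t$ at which \Walksat\ flips $x$; set $\iota(x)=i_s$, the index chosen by step {\bf PI1} at time $s$. Three claims need to be checked. First, $i_s\in\cD_t$: since $\PHI_{i_s}$ was unsatisfied under $\sigma_{s-1}$, all of its literals were false then, so every variable underlying a positive literal of $\PHI_{i_s}$ was assigned $0$ by $\sigma_{s-1}$ and had therefore already been flipped at some step $<s$; such a variable lies in $\cA_t\cup\cN_t$, so $\PHI_{i_s}$ is $\cA_t\cup\cN_t$-negative. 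Second, $\PHI_{i_s}$ is satisfied under $\sigma_t$: flipping $x$ at step $s$ made the literal $\PHI_{i_sj_s}$ true under $\sigma_s$, and since $x$ is not flipped after step $s$, this literal---and hence the clause---is still true under $\sigma_t$. Third, $\iota$ is injective: if $x\ne y$ then $s(x)\ne s(y)$ because exactly one variable is flipped per step; assuming $s(x)<s(y)$, if we had $i_{s(x)}=i_{s(y)}$ then the literal $\PHI_{i_{s(x)}j_{s(x)}}$, which is true under $\sigma_{s(x)}$ and whose variable $x$ is not flipped afterwards, would be true under $\sigma_{s(y)-1}$, contradicting that step {\bf PI1} at time $s(y)$ picks a clause unsatisfied under $\sigma_{s(y)-1}$.

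Given the injection, $\abs{\cD_t}\ge\abs{\cA_t}$, and since $S_t=\abs{\cD_t}-\abs{\cA_t}=0$ the map $\iota$ must be a bijection onto $\cD_t$. Hence every clause with index in $\cD_t$ equals $\iota(x)$ for some $x\in\cA_t$ and is therefore satisfied under $\sigma_t$. Together with the clauses outside $\cD_t$ being satisfied, this shows $\sigma_t$ is a satisfying assignment.

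The argument is essentially bookkeeping, so I do not expect a real obstacle. The one point requiring a little care is the claim that a variable that has been flipped at least once stays in $\cA_t\cup\cN_t$ at all later times; this follows by a short induction on $t$ from step {\bf PI3}, using that $\cN_t$ is non-decreasing. (Alternatively one can bring in the hypothesis $H_t=0$ directly: then $\sigma_t$ agrees on $\cN_t$ with the rich assignment $\tau_t$ from \Prop~\ref{Prop_tau}, so every clause $\PHI_i$ with $i\in\cZ_t$---all of whose variables lie in $\cN_t$---is satisfied, and the injection is then needed only for the indices in $\cD_t\setminus\cZ_t$, where one notes $\iota(\cA_t)\subseteq\cD_t\setminus\cZ_t$ since $\cA_t\cap\cN_t=\emptyset$.)
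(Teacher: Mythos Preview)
Your argument is correct and follows the same route as the paper: both build an injection from $\cA_t$ into $\cD_t$ whose image consists of clauses satisfied under $\sigma_t$, and your map $\iota$ (defined via the last flip time of each $x\in\cA_t$) coincides with the paper's inductively defined map $s_t$. The one presentational difference is that you push the bijection argument to cover all of $\cD_t$ at once and observe that $H_t$ is redundant for this proposition, whereas the paper uses $S_t=0$ only for clauses with index outside $\cZ_t$ and then invokes $H_t=0$ (via the rich assignment $\tau_t$) to handle $\cZ_t$---exactly the alternative you sketch in your closing parenthetical.
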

\begin{proof}
Let $U_t$ be the number of clause indices $i\in\brk m\setminus\cZ_t$ such that $\PHI_i$ is unsatisfied under $\sigma_t$.
We claim that
	\begin{eqnarray}\label{eqU1}
	U_t&\leq&S_t=|\cD_t|-|\cA_t|.
	\end{eqnarray}
To see this, recall that any index $i\in\brk m$ such that $\PHI_i$ is unsatisfied under $\sigma_t$ belongs to $\cD_t$.
Therefore, to prove~(\ref{eqU1}) it suffices to construct injective maps $s_t:\cA_t\ra\cD_t$ such that for any $x\in\cA_t$ the clause
$\PHI_{s_t(x)}$ is satisfied under $\sigma_t$.
In fact, the map $s_t$ will have the property that for each $x\in\cA_t$ there is an index $j\in\brk k$ such that $x=|\PHI_{s_t(x)j}|$ and such
that the literal $\PHI_{s_t(x)j}$ is true under $\sigma_t$.

The construction of the maps $s_t$ is inductive.
For $t=0$ we have $\cA_0=\emptyset$ and thus there is nothing to do.
Thus, suppose that $1\leq t\leq T$ and that we have defined $s_{t-1}$ already.
Let $y=|\PHI_{i_tj_t}|$ be the variable flipped at time $t$. 
If $i_t\not\in\cZ_t$, then $y\in\cA_t$ and we define $s_t(y)=i_t$.
Moreover, we let $s_t(x)=s_{t-1}(x)$ for all $x\in\cA_t\setminus\cbc y\subset\cA_{t-1}$.
(Note that it is possible that $y\in\cA_{t-1}$ as $y$ may have been flipped before.)
For $t>T$ we set $s_t=s_{t-1}$.

To verify that $s_t$ has the desired properties, assume that $T\geq t$ and observe that
	 {\bf PI1} ensures that $\PHI_{i_t}$ was unsatisfied under $\sigma_{t-1}$.
Thus, $i_t\in\cD_{t-1}\subset\cD_t$.
But as {\bf PI1} sets $\sigma_t(y)=1-\sigma_{t-1}(y)$, $\PHI_{i_t}$ is satisfied under $\sigma_t$. 
Furthermore, for all $x\in\cA_{t}\setminus\cbc y$ we have $\sigma_t(x)=\sigma_{t-1}(x)$, and thus
	each of these variables contributes a true literal to its clause $\PHI_{s_t(x)}=\PHI_{s_{t-1}(x)}$ by induction.
Since $s_{t-1}$ is injective but $\PHI_{i_t}$ was unsatisfied under $\sigma_{t-1}$, we have
	$i_t\not\in\mathrm{Im}(s_{t-1})$, whence $s_t$ is injective.
This establishes~(\ref{eqU1}).

As~(\ref{eqU1}) shows, $S_t=0$ implies $U_t=0$, i.e., $\sigma_t$ satisfies all clauses $\PHI_i$ with $i\not\in\cZ_t$.
To complete the proof, we need to show that if $H_t=0$, then $\sigma_t$ also satisfies all clauses $\PHI_i$ with $i\in\cZ_t$.
But if $H_t=0$, then $\sigma_t(x)=\tau_t(x)$ for all $x\in\cN_t$, and $\tau_t$ is a satisfying assignment of $\PHI_{\cZ_t}$.
\qed\end{proof}

Finally, we have all the pieces in place to prove \Thm~\ref{Thm_pos}.

\medskip
\noindent\emph{Proof of \Thm~\ref{Thm_pos} (assuming \Prop s~\ref{Prop_D} and~\ref{Prop_Z}).}
\Prop~\ref{Prop_U} shows that
	$$\pr\brk{T\geq T^*}=\pr\brk{T\geq T^*\wedge\forall 1\leq t\leq T^*:S_t+H_t>0}.$$
We are going to bound the probability on the r.h.s.
To this end, we work with two random variables $S_t',H_t'$ that are easier to analyze than the original $S_t,H_t$.
Namely, we let $S_0'=H_0'=0$, and 
	$$S_t'=S_{t-1}'-\left\{\begin{array}{cl}
			1&\mbox{ if }\pi_{t-1}(i_t,j_t)=-1,\\
			0&\mbox{ otherwise}
			\end{array}\right.
			\qquad\qquad\qquad\qquad\qquad(t\geq1).$$
In other words, we let $S_t'=S_{t-1}'-1$ if the variable flipped at time $t$
	had not been flipped before and does not occur in any of the `exceptional' clauses $\PHI_{\cZ_{t-1}}$.
Otherwise, $S_t'=S_{t-1}'$.

We claim that
	\begin{eqnarray}\label{eqpos1}
	S_t&\leq&\abs{\cD_t}+k\abs{\cZ_t}+S_t'\qquad\mbox{for any }t\geq0.
	\end{eqnarray}
To see this, recall from~(\ref{eqSt}) that $S_t=\abs{\cD_t}-\abs{\cA_t}$.
By {\bf PI3}, the set $\cA_t$ contains all variables $|\PHI_{i_sj_s}|$ such that $\pi_{s-1}(i_s,j_s)=-1$ with $s\leq t$, except
the ones that belong to $\cN_t$.
Since $|\cN_t|\leq k\abs{\cZ_t}$, we obtain~(\ref{eqpos1}).

Furthermore, we let $H_0'=0$ and 
	$$H_t'=H_{t-1}'+
			\left\{\begin{array}{cl}
			-1&\mbox{ if $|\PHI_{i_tj_t}|\in\cN_{t-1}$ and $\sigma_{t}(|\PHI_{i_tj_t}|)=\tau_{t}(|\PHI_{i_tj_t}|)$,}\\
			1&\mbox{ if $|\PHI_{i_tj_t}|\in \cN_{t-1}$ and $\sigma_{t}(|\PHI_{i_tj_t}|)\neq\tau_{t}(|\PHI_{i_tj_t}|)$,}\\
			0&\mbox{ otherwise}
			\end{array}\right.
				\qquad\qquad\qquad(t\geq1).$$
Thus, starting at $0$, we decrease the value of $H_t'$ by one if the variable flipped at time $t$ lies in $\cN_{t-1}$
and its new value coincides with the `ideal' assignment $\tau_t$, while we increase by one if these values differ.

We claim that 
	\begin{eqnarray}\label{eqpos2}
	H_t&\leq&k\abs{\cZ_t}+H_t'\qquad\mbox{for any }t\geq0.
	\end{eqnarray}
For $H_0=H_0'$ and
	\begin{eqnarray*}
	H_t-H_{t-1}&=&\abs{\cbc{x\in\cN_t:\sigma_t(x)\neq\tau_t(x)}}-
			\abs{\cbc{x\in\cN_{t-1}:\sigma_{t-1}(x)\neq\tau_{t-1}(x)}}\\
		&\leq&\abs{\cN_t\setminus\cN_{t-1}}+H_t'-H_{t-1}'
			\leq k\abs{\cZ_t\setminus\cZ_{t-1}}+H_t'-H_{t-1}'\qquad\qquad\mbox{ for any }t\geq1.
	\end{eqnarray*}

Combining~(\ref{eqpos1}) and~(\ref{eqpos2}) with \Prop s~\ref{Prop_D} and~\ref{Prop_Z}, we see that \whp
	\begin{eqnarray}\nonumber
	S_t+H_t&\leq&\abs{\cD_t}+2k\abs{\cZ_t}+S_t'+H_t'\\
			&\leq&2^{2-k}m+2k\abs{\cZ_t}+S_t'+H_t'\leq\frac{4\rho n}{k}+2k\eps n+S_t'+H_t'
				\qquad\mbox{for any }t\leq T^*.
				\label{eqpos3}
	\end{eqnarray}
Hence, we are left to analyze $S_t'+H_t'$.

The sequence $(S_t'+H_t')_t$ is a super-martingale.
More precisely, we claim that with $\gamma=0.429$ we have
	\begin{eqnarray}\label{eqpos4}
	\Erw\brk{S_t'+H_t'|\cF_{t-1}}<S_{t-1}'+H_{t-1}'-\gamma\qquad\mbox{for all }t\leq\min\cbc{T,T^*}.
	\end{eqnarray}
There are two cases to consider.
\begin{description}
\item[Case 1: $i_t\not\in\cZ_{t-1}$.]
	The construction in step {\bf PI2} ensures that there are fewer than $\lambda$ indices $j$
	such that $|\PHI_{i_tj}|\in\cN_{t-1}$.
	Furthermore, {\bf PI2} ensures that there are less than $k_1$ indices $j$ such that
		$|\PHI_{i_tj}|\in \cA_{t-1}$.
	Moreover, there is no index $j$ such that $\pi_{t-1}(i_t,j)=1$, because
	otherwise clause $\PHI_{i_t}$ would have been satisfied under $\sigma_{t-1}$.
	This means that for at least $k-k_1-\lambda$ indices $j\in\brk k$ we have
		$\pi_{t-1}(i_t,j)=-1$.
	Therefore, as $j_t\in\brk k$ is chosen uniformly at random,
			with probability at least $1-(k_1+\lambda)/k\geq0.43-\lambda/k$ we have $S_t'=S_{t-1}'-1$.
	In addition, as $\PHI_{i_t}$ contains at most $\lambda$ variables from $\cN_{t-1}$,
	the probability that $H_t'=H_{t-1}'+1$ is bounded from above by $\lambda/k<0.00001$.
	Thus, (\ref{eqpos4}) holds. 
\item[Case 2: $i_t\in\cZ_{t-1}$.]
	As the assignment $\tau_{t-1}$ is rich, there are at least $0.8k$ indices $j$ such that $\tau_t(\PHI_{i_tj})=\tau_{t-1}(\PHI_{i_tj})=1$.
	However, for all of these indices $j$ we have $\sigma_{t-1}(\PHI_{i_tj})=0$, because $\PHI_{i_t}$ is unsatisfied under $\sigma_{t-1}$.
	Hence, the probability that $\tau_t(\PHI_{i_tj_t})=1$ and $\sigma_{t-1}(\PHI_{i_tj_t})=0$ is at least $0.8$,
		and if this event indeed occurs then $\sigma_t(\PHI_{i_tj_t})=\tau_t(\PHI_{i_tj_t})=1$.
	Therefore,
		$H_t'-H_{t-1}'$ has expectation $\leq-0.8+0.2\leq-0.6$.
	Moreover, $S_t'\leq S_{t-1}'$ with certainty.
	This implies~(\ref{eqpos4}). 
\end{description}

To complete the proof, we are going to apply Azuma's inequality (\Lem~\ref{Azuma} in \Sec~\ref{Sec_pre})
to the random variable $S_{T^*}'+H_{T^*}'$.
The inequality applies because~(\ref{eqpos4}) shows that $(S_t'+H_t')_{t\geq0}$ is a super-martingale.
However, there is a minor technical intricacy: to use the inequality,
we need an upper bound on the \emph{expectation} $\Erw\brk{S_{T^*}'+H_{T^*}'}$.
But as~(\ref{eqpos4}) only holds for $t\leq\min\cbc{T,T^*}$,
this would require knowledge of the probability that $T\geq T^*$,
the very quantity that we want to estimate.

To circumvent this problem, we define further random variables $R_t$ by letting $R_t=S_t'+H_t'$ for $t\leq\min\cbc{T^*,T}$
and $R_t=R_{t-1}-\gamma$ for $t>\min\cbc{T^*,T}$.
Then $R_0=0$ and $\Erw\brk{R_t|\cF_{t-1}}\leq R_{t-1}-\gamma$ for all $t\geq0$.
Thus, 
	$\Erw\brk{R_{T^*}}\leq-\gamma\,T^*$.
Recalling the definition~(\ref{eqlambdaeps}) of $\eps$, we obtain
for $k\geq k_0$ sufficiently large and $\rho\leq\rho_0=1/25$ 
the bound 
	\begin{equation}\label{eqRTexp}
	\Erw\brk{R_{T^*}}\leq-\gamma\cdot T^*\leq-4\rho n/k-10k\eps n.
	\end{equation}
Furthermore, $|R_t-R_{t-1}|\leq2$ for all $t\geq0$ by the definitions of $S_t',H_t'$.
Therefore, Azuma's inequality and~(\ref{eqRTexp}) yield
	\begin{eqnarray}\label{eqRT}
	\pr\brk{R_{T^*}>-4\rho n/k-2k\eps n}&\leq&\pr\brk{R_{T^*}>\Erw\brk{R_{T^*}}+n^{2/3}}
		\leq\exp\brk{-\frac{n^{4/3}}{8T^*}}=o(1).
	\end{eqnarray}
Finally, we obtain from~(\ref{eqpos1}), (\ref{eqpos2}), and \Prop~\ref{Prop_U}
	\begin{eqnarray*}
	\pr\brk{T>T^*}&\leq&	\pr\brk{\forall t\leq T^*:\abs{\cD_t}+2k\abs{\cZ_t}+R_t>0}
		\leq\pr\brk{\abs{\cD_{T^*}}+2k\abs{\cZ_{T^*}}+R_{T^*}>0}\\
		&\leq&\pr\brk{\abs{\cD_{T^*}}+2k\abs{\cZ_{T^*}}>4\rho n/k+2k\eps n}+\pr\brk{R_{T^*}>-4\rho n/k-2k\eps n}
		\;\stacksign{(\ref{eqpos3}),\,(\ref{eqRT})}{=}\;o(1),
	\end{eqnarray*}
thereby completing the proof.
\qed

Our remaining task is to establish \Prop s~\ref{Prop_D} and~\ref{Prop_Z}.
From a formal point of view, we should start with \Prop~\ref{Prop_Z} because the proof of \Prop~\ref{Prop_D} depends on it.
However, the argument that is used in the proof of \Prop~\ref{Prop_D} is conceptually similar to but technically far simpler
than the one that we use to prove \Prop~\ref{Prop_Z}.
Hence, for didactical reasons we will start with the proof of \Prop~\ref{Prop_D} in \Sec~\ref{Sec_D}
and postpone the proof of \Prop~\ref{Prop_Z} to \Sec~\ref{Sec_Z}.

\section{Proof of \Prop~\ref{Prop_D}}\label{Sec_D}

\emph{In this section we keep the notation and the assumptions from \Prop~\ref{Prop_D}.}

Our goal is to bound the number $|\cD_{T^*}|$ of $\cA_{T^*}\cup\cN_{T^*}$-negative clauses $\PHI_i$,
i.e., clauses whose positive literals all belong to $\cA_{T^*}\cup\cN_{T^*}$.
Thus, we need to study how the process {\bf PI0--PI3} `hits' the positions $(i,j)\in\brk m\times\brk k$
that represent positive literals by adding their underlying variable
to $\cA_{T^*}\cup\cN_{T^*}$.
To this end, we consider the two random variables
	\begin{eqnarray}\label{eqKt*}
	K_t^*(i,j)&=&\left\{\begin{array}{cl}
			1&\mbox{ if }
					\pi_{t-1}(i,j)=1
						\mbox{ and }\PHI_{ij}\in\cA_t,\\
			0&\mbox{ otherwise,}
			\end{array}\right.\\
	K_t^0(i,j)&=&\left\{\begin{array}{cl}
			1&\mbox{ if }
					\pi_{t-1}(i,j)=1\mbox{ and }\PHI_{ij}\in\cN_t,\\
			0&\mbox{ otherwise,}
			\end{array}\right.
			\label{eqKt0}
	\end{eqnarray}
for any $(i,j)\in\brk m\times\brk k$ and $t\geq1$.
Recall that $\pi_{t-1}(i,j)=\sign(\PHI_{ij})$ iff $\PHI_{ij}$ is a literal such that $|\PHI_{ij}|\not\in\cA_{t-1}\cup\cN_{t-1}$
	(cf.\ {\bf PI3}).
To simplify the notation, we define for a set $\cI\subset\brk m\times\brk k$
	$$K_t^*(\cI)=\prod_{(i,j)\in\cI}K_t^*(i,j),\qquad K_t^0(\cI)=\prod_{(i,j)\in\cI}K_t^0(i,j).$$
If $\cI^*,\cI^0\subset\brk m\times\brk k$ are both non-empty, then
	\begin{equation}\label{eqKt*Kt0}
	K_t^*(\cI^*)\cdot K_t^0(\cI^0)=0.
	\end{equation}
Indeed, suppose that $K_t^0(\cI^0)\neq0$.
Then {\bf PI2} must have added at least one clause to $\cZ_t$.
But the construction in {\bf PI2} ensures that the first clause that gets added to $\cZ_t$ 
contains the variable $|\PHI_{i_tj_t}|$ flipped at time $t$.
Thus, $\cA_t\subset\cA_{t-1}$ by {\bf PI3}, and thus there cannot be a pair $(i,j)$ with $K_t^*(i,j)=1$.
In effect, $K_t^*(\cI^*)=0$.

\begin{lemma}\label{Lemma_Kt*}
Let $t\geq1$ and  $\emptyset\neq\cI^*\subset\brk m\times\brk k$.
Let $\cE_t^*(\cI^*)$ be the event that $|\PHI_{ij}|=|\PHI_{i_tj_t}|\not\in\cA_{t-1}\cup\cN_{t-1}$ for all $(i,j)\in\cI^*$, and that $(i_t,j_t)\not\in\cI^*$.
Then
	\begin{eqnarray}\label{eqKt8}
	\pr\brk{\cE_t^*(\cI^*)|\cF_{t-1}}&\leq&\max\cbc{1,|V\setminus(\cA_{t-1}\cup\cN_{t-1})|}^{-\abs{\cI^*}}.\ 
	\end{eqnarray}
\end{lemma}
\begin{proof}
Since clause $\PHI_{i_t}$ is unsatisfied under $\sigma_{t-1}$, $\PHI_{i_t}$ is $\cA_{t-1}\cup\cN_{t-1}$-negative
	and thus $\pi_{t-1}(i_t,j_t)\neq1$.
Hence, {\bf PI3} ensures that either $|\PHI_{i_tj_t}|\in\cA_{t-1}\cup\cN_{t-1}$ or $\pi_{t-1}(i_t,j_t)=-1$.
If $\cE_t^*(\cI^*)$ occurs, then $|\PHI_{i_tj_t}|\not\in\cA_{t-1}\cup\cN_{t-1}$ and thus $\pi_{t-1}(i_t,j_t)=-1$.
Furthermore, if $\cI^*$ occurs, then $|\PHI_{ij}|\not\in\cA_{t-1}\cup\cN_{t-1}$ for all $(i,j)\in\cI^*$,
and thus $\pi_{t-1}(i,j)\in\cbc{-1,1}$ by {\bf PI3}.
Thus, by Fact~\ref{Fact_iid} $|\PHI_{i_tj_t}|$ and $|\PHI_{ij}|$ with $(i,j)\in\cI^*$ are independently uniformly distributed over
$V\setminus(\cA_{t-1}\cup\cN_{t-1})$. 
Therefore,
	\begin{eqnarray*}
	\pr\brk{\cE_t^*(\cI^*)|\cF_{t-1}}&\leq&\max\cbc{1,|V\setminus(\cA_{t-1}\cup\cN_{t-1})|}^{-|\cI^*|},
	\end{eqnarray*}
as claimed.
\qed\end{proof}

\begin{corollary}\label{Cor_Kt*}
For any $t\geq1$,  $\cI^*\subset\brk m\times\brk k$ we have
	\begin{eqnarray*}
	\Erw\brk{K_t^*(\cI^*)|\cF_{t-1}}&\leq&\max\cbc{1,|V\setminus(\cA_{t-1}\cup\cN_{t-1})|}^{-\abs{\cI^*}}.\ 
	\end{eqnarray*}
\end{corollary}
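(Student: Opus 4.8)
The plan is to deduce Corollary~\ref{Cor_Kt*} directly from Lemma~\ref{Lemma_Kt*} by observing that the event $\{K_t^*(\cI^*)=1\}$ is contained in the event $\cE_t^*(\cI^*)$ of the lemma. First I would dispose of the trivial case $\cI^*=\emptyset$, where $K_t^*(\cI^*)$ is the empty product, hence identically $1$, and the claimed bound reads $1\leq1$. So assume $\cI^*\neq\emptyset$. Since $K_t^*(\cI^*)=\prod_{(i,j)\in\cI^*}K_t^*(i,j)$ is a product of $0/1$-valued random variables, we have $\Erw\brk{K_t^*(\cI^*)|\cF_{t-1}}=\pr\brk{K_t^*(\cI^*)=1|\cF_{t-1}}=\pr\brk{\forall(i,j)\in\cI^*:K_t^*(i,j)=1\,|\,\cF_{t-1}}$.

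Next I would show that $\{\forall(i,j)\in\cI^*:K_t^*(i,j)=1\}\subset\cE_t^*(\cI^*)$. By the definition~(\ref{eqKt*}), if $K_t^*(i,j)=1$ for some $(i,j)\in\cI^*$, then $\pi_{t-1}(i,j)=1$ and $\PHI_{ij}\in\cA_t$. The condition $\pi_{t-1}(i,j)=1$ forces, via {\bf PI3}, that $|\PHI_{ij}|\notin\cA_{t-1}\cup\cN_{t-1}$ and that $\PHI_{ij}$ is a positive literal. On the other hand, $\PHI_{ij}\in\cA_t$ means the underlying variable $|\PHI_{ij}|$ was added to $\cA_t$ at step $t$; by the construction in {\bf PI3} the only variable that can enter $\cA_t$ at time $t$ is $|\PHI_{i_tj_t}|$, so $|\PHI_{ij}|=|\PHI_{i_tj_t}|$. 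Combining these, $|\PHI_{ij}|=|\PHI_{i_tj_t}|\notin\cA_{t-1}\cup\cN_{t-1}$. It remains to check $(i_t,j_t)\notin\cI^*$: if $(i_t,j_t)$ itself lay in $\cI^*$ we would need $\pi_{t-1}(i_t,j_t)=1$, but clause $\PHI_{i_t}$ is unsatisfied under $\sigma_{t-1}$, hence $\cA_{t-1}\cup\cN_{t-1}$-negative, so $\pi_{t-1}(i_t,j_t)\neq1$ by {\bf PI3} — a contradiction. Thus $(i_t,j_t)\notin\cI^*$, and we have verified membership in $\cE_t^*(\cI^*)$ (noting that the definition of $\cE_t^*(\cI^*)$ already incorporates $|\PHI_{ij}|\notin\cA_{t-1}\cup\cN_{t-1}$ for the relevant $(i,j)$).

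Having established this containment, monotonicity of conditional probability gives $\pr\brk{K_t^*(\cI^*)=1|\cF_{t-1}}\leq\pr\brk{\cE_t^*(\cI^*)|\cF_{t-1}}$, and then Lemma~\ref{Lemma_Kt*} finishes the argument:
\[
\Erw\brk{K_t^*(\cI^*)|\cF_{t-1}}\leq\pr\brk{\cE_t^*(\cI^*)|\cF_{t-1}}\leq\max\cbc{1,|V\setminus(\cA_{t-1}\cup\cN_{t-1})|}^{-\abs{\cI^*}}.
\]

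There is no real obstacle here; the corollary is essentially a repackaging of the lemma. The one point that needs a little care — and which I would state explicitly — is the implication $\PHI_{ij}\in\cA_t\Rightarrow|\PHI_{ij}|=|\PHI_{i_tj_t}|$, i.e.\ that at a given time step only the freshly flipped variable can be newly added to the $*$-labeled set; this is exactly what {\bf PI3} guarantees through the formula $\cA_t=(\cA_{t-1}\cup\cbc{|\PHI_{i_tj_t}|})\setminus\cN_t$, together with the fact that $\pi_{t-1}(i,j)=1$ forces $|\PHI_{ij}|\notin\cA_{t-1}$ so membership in $\cA_t$ cannot have been inherited from $\cA_{t-1}$.
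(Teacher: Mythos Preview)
Your proposal is correct and follows exactly the same approach as the paper: show that $\{K_t^*(\cI^*)=1\}\subset\cE_t^*(\cI^*)$ and then invoke Lemma~\ref{Lemma_Kt*}. The paper's proof is a two-line version of yours; you have simply filled in the details of the containment (in particular, the argument that $(i_t,j_t)\notin\cI^*$ and that $|\PHI_{ij}|\in\cA_t$ forces $|\PHI_{ij}|=|\PHI_{i_tj_t}|$), which the paper leaves implicit.
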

\begin{proof}
If $\prod_{(i,j)\in\cI^*}K_t^*(i,j)=1$, then the event $\cE_t^*(\cI^*)$ occurs.
Hence, \Lem~\ref{Lemma_Kt*} implies that
	\begin{eqnarray}
	\Erw\brk{\prod_{(i,j)\in\cI^*}K_t^*(i,j)|\cF_{t-1}}&\leq&\pr\brk{\cE_t^*(\cI^*)|\cF_{t-1}}\leq
		\max\cbc{1,|V\setminus(\cA_{t-1}\cup\cN_{t-1})|}^{-|\cI^*|},
	\end{eqnarray}
as claimed.
\qed\end{proof}

\begin{lemma}\label{Lemma_Kt0}
For any $t\geq1$, $\delta_t\geq0$ and $\cI^0\subset\brk m\times\brk k$ we have
	\begin{eqnarray*}
	\Erw\brk{
		K_t^0(\cI^0)\cdot
				\vecone\cbc{|\cZ_t\setminus\cZ_{t-1}|\leq\delta_t}|\cF_{t-1}}
			&\leq&
					\bcfr{k\delta_t}{\max\cbc{1,|V\setminus(\cA_{t-1}\cup\cN_{t-1})|-k\delta_t}}^{\abs{\cI^0}}.
	\end{eqnarray*}
\end{lemma}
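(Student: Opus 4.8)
The statement bounds the conditional probability that the process assigns, at time $t$, positive-literal slots from a prescribed set $\cI^0$ to variables that land in $\cN_t$, on the event that the growth $|\cZ_t\setminus\cZ_{t-1}|$ is at most $\delta_t$. The plan is to exploit the structure of step \textbf{PI2}: when $K_t^0(\cI^0)\neq0$, step \textbf{PI2} is non-void, so $\cN_t$ consists of $\cN_{t-1}$ together with the variables occurring in the at most $\delta_t$ clauses newly added to $\cZ_t$. Since each clause contributes at most $k$ variables, the set $\cN_t\setminus\cN_{t-1}$ has size at most $k\delta_t$ on the event $\{|\cZ_t\setminus\cZ_{t-1}|\leq\delta_t\}$. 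Hence, for each $(i,j)\in\cI^0$ with $K_t^0(i,j)=1$ we have $\pi_{t-1}(i,j)=1$ (so that $|\PHI_{ij}|\notin\cA_{t-1}\cup\cN_{t-1}$, by \textbf{PI3}) and $|\PHI_{ij}|\in\cN_t\setminus\cN_{t-1}$, a set of size at most $k\delta_t$.

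First I would fix the set $\cI^0$ and condition on $\cF_{t-1}$. On this conditioning, Fact~\ref{Fact_iid} tells us that the variables $(|\PHI_{ij}|)_{(i,j)\in\cI^0}$, all of which satisfy $\pi_{t-1}(i,j)=1\in\{-1,1\}$ whenever $K_t^0(i,j)=1$, are independently uniformly distributed over $V\setminus(\cA_{t-1}\cup\cN_{t-1})$. The crux is then to show that, conditioned on $\cF_{t-1}$ and on the value of the variable $|\PHI_{i_tj_t}|$ flipped at time $t$, the set $\cN_t\setminus\cN_{t-1}$ is \emph{determined} by the flipped variable together with the positions of the remaining occurrences of variables in $\cN_{t-1}\cup\{|\PHI_{i_tj_t}|\}$ throughout the formula — i.e. it does not depend on where the still-hidden literals $(|\PHI_{ij}|)_{(i,j)\in\cI^0}$ actually land, provided none of those variables is the one being revealed in \textbf{PI2}. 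More carefully: \textbf{PI2} reveals a (random) growing set $\cN_t$ by a deterministic rule applied to signs and to the occurrences of already-revealed variables; the pairs $(i,j)\in\cI^0$ with $\pi_{t-1}(i,j)=1$ are, by definition, \emph{not} yet revealed at time $t-1$, and the event we are bounding is precisely that their underlying variables end up inside the revealed set $\cN_t$. So I would expose $\cN_t\setminus\cN_{t-1}$ first (it is measurable with respect to $\cF_{t-1}$ together with the coordinates outside $\cI^0$, when $\cI^0$-slots carry sign information only) and then bound, for each $(i,j)\in\cI^0$ separately, the probability that the (still independent, uniform over $V\setminus(\cA_{t-1}\cup\cN_{t-1})$) variable $|\PHI_{ij}|$ falls into the revealed set, which has size at most $k\delta_t$ on the good event. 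A small subtlety is that revealing some $|\PHI_{ij}|$ with $(i,j)\in\cI^0$ could itself trigger further additions to $\cN_t$; this is handled by the union-bound/peeling argument: order the slots in $\cI^0$ and reveal them one at a time, each time paying a factor $\frac{k\delta_t}{|V\setminus(\cA_{t-1}\cup\cN_{t-1})|-k\delta_t}$ (the denominator accounts for the variables already excluded, including those revealed by earlier steps and by the possible cascade), which telescopes into the claimed product. The $\max\{1,\cdot\}$ in the denominator simply guards against the degenerate case where almost all variables are already in $\cA_{t-1}\cup\cN_{t-1}$.

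Concretely, the steps in order: (i) reduce to $K_t^0(i,j)=1$ for all $(i,j)\in\cI^0$ and note this forces \textbf{PI2} to be non-void; (ii) on $\{|\cZ_t\setminus\cZ_{t-1}|\leq\delta_t\}$, observe $|\cN_t\setminus\cN_{t-1}|\leq k\delta_t$; (iii) enumerate $\cI^0=\{(i_1,j_1'),\dots,(i_r,j_r')\}$ (relabelling) and reveal the underlying variables sequentially, at each stage invoking Fact~\ref{Fact_iid} for the conditional uniformity of the next hidden variable over a set of size at least $|V\setminus(\cA_{t-1}\cup\cN_{t-1})|$ minus the variables excluded so far, the latter being at most $k\delta_t$ (counting $\cN_t\setminus\cN_{t-1}$ and any earlier-revealed $\cI^0$-variables together with their cascades, all absorbed into the $k\delta_t$ bound since each newly revealed variable either already lies in $\cN_t\setminus\cN_{t-1}$ or, once added, keeps the total within $k\delta_t$ on the good event); (iv) multiply the per-step bounds $\frac{k\delta_t}{\max\{1,|V\setminus(\cA_{t-1}\cup\cN_{t-1})|-k\delta_t\}}$ over the $|\cI^0|$ slots.

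\textbf{Main obstacle.} The delicate point is step (iii): making precise that the revealed set $\cN_t$, together with its cascade when a $\cI^0$-variable happens to fall inside it, never exceeds $k\delta_t$ \emph{new} variables on the event $\{|\cZ_t\setminus\cZ_{t-1}|\leq\delta_t\}$, and that the conditional uniformity of Fact~\ref{Fact_iid} can legitimately be applied step by step even though each revelation may change which clauses \textbf{PI2} adds next. The clean way to argue is to note that the entire run of \textbf{PI2} at time $t$ is a deterministic function of $\cF_{t-1}$, of $|\PHI_{i_tj_t}|$, and of the occurrences of the finitely many variables it touches; since the bad event asks that the $\cI^0$-variables land in the output set $\cN_t$, and since that output set — \emph{whatever value the hidden $\cI^0$-variables take}, as long as it is consistent with the good event — has at most $k\delta_t$ new elements, one may expose the hidden variables in a fixed order and bound each inclusion probability by $k\delta_t$ over the shrinking ground set. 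This is exactly the pattern that, as the authors remark, reappears (harder) in the proof of \Prop~\ref{Prop_Z}.
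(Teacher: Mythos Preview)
Your plan correctly identifies that on the good event $|\cN_t\setminus\cN_{t-1}|\le k\delta_t$ and that the $\cI^0$-slot variables are conditionally i.i.d.\ uniform over $V\setminus(\cA_{t-1}\cup\cN_{t-1})$, but step~(iii) and your ``clean way to argue'' do not close the circularity you yourself flag. The target set $\cN_t\setminus\cN_{t-1}$ is \emph{not} measurable with respect to $\cF_{t-1}$ together with the non-$\cI^0$ coordinates: whether a clause $c$ carrying a slot $(c,j)\in\cI^0$ enters $\cZ_t$ during {\bf PI2} depends precisely on whether $|\PHI_{cj}|$ already lies in the growing $\cN_t$, and this feeds back into which further clauses get added. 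So you cannot ``expose $\cN_t\setminus\cN_{t-1}$ first'' and then drop the $\cI^0$-variables into it; and the fact that the output set has size $\le k\delta_t$ \emph{for every realisation consistent with the good event} does not by itself give $\pr[X\in S]\le k\delta_t/N$ when $S$ is a function of $X$. Revealing the $\cI^0$-variables one at a time does not help either, because at each stage the target for the next variable still depends on the yet-unrevealed ones.

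The paper breaks the circularity by a different device. It does not fix a target set at all; instead it follows the {\bf PI2} cascade and records the ordered sequence $\xi_0=|\PHI_{i_tj_t}|,\xi_1,\ldots,\xi_L$ of \emph{new} variables that {\bf PI2} uncovers, each sitting at a position with $\pi_{t-1}=-1$ (hence disjoint from $\cI^0$), with $L+1\le k\delta_t$ on the good event. It then takes a union bound over all maps $f:\cI^0\to\{0,\ldots,k\delta_t-1\}$---this contributes the factor $(k\delta_t)^{|\cI^0|}$---and, for fixed $f$, bounds the event $\cE^0(f)=\{|\PHI_{ij}|=\xi_{f(i,j)}\mbox{ for all }(i,j)\in\cI^0\}$ by processing $l=0,1,\ldots$ in order: conditioning on the sub-events for $\nu<l$ forces each $\cI^0$-variable in $f^{-1}(l)$ to be uniform over $V\setminus(\cA_{t-1}\cup\cN_{t-1}\cup\{\xi_0,\ldots,\xi_{l-1}\})$, a set of size at least $|V\setminus(\cA_{t-1}\cup\cN_{t-1})|-k\delta_t$, which supplies the denominator. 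The peeling thus runs along the {\bf PI2}-revealed sequence rather than along $\cI^0$, and it is exactly this switch of viewpoint that dissolves the dependence your proposal leaves open.
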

\begin{proof}
We may  assume that $\cI^0\neq\emptyset$.
We may also assume that
	$\pi_{t-1}(i,j)=1$ for all $(i,j)\in\cI^0$ as otherwise 
	$K_t^0(\cI^0)=0$.
We are going to work with the conditional distribution
	$$p\brk{\cdot}=\pr\brk{\cdot|\cF_{t-1}}.$$
Let $\cE^0$ be the event that $K_t^0(\cI^0)=1$ and $|\cZ_t\setminus\cZ_{t-1}|\leq\delta_t$.
Then our goal is to estimate 
	$p\brk{\cE^0}$.

If the event $\cE^0$ occurs, then $\pi_{t-1}(i_t,j_t)=-1$ and $|\PHI_{i_tj_t}|\in\cN_t$.
Indeed, being unsatisfied under the assignment $\sigma_{t-1}$, clause $\PHI_{i_t}$ is $\cA_{t-1}\cup\cN_{t-1}$-negative,
and thus $\pi_{t-1}(i_t,j_t)\neq1$.
Furthermore, if $\pi_{t-1}(i_t,j_t)=\PHI_{i_tj_t}$, then $|\PHI_{i_tj_t}|\in\cA_{t-1}\cup\cN_{t-1}$ by {\bf PI3},
and thus $\cZ_t=\cZ_{t-1}$ and $\cN_t=\cN_{t-1}$ by the construction in step {\bf PI2}.
But if $\cN_t=\cN_{t-1}$, then $K_t^0(\cI^0)=0$ by definition.

Thus, assume that $\pi_{t-1}(i_t,j_t)=-1$ and $|\PHI_{i_tj_t}|\in\cN_t$.
We need to trace the process described in {\bf PI2} that enhances the sets $\cN_t$ and $\cZ_t$.
This process may add a sequence of clause indices to the set $\cZ_t$ and the variables that
occur in these clauses to $\cN_t$.
As these variables get added to the set $\cN_t$ one by one, we will study the probability
that they occur in one of the positions $(i,j)\in\cI^0$.
The first clause that {\bf PI2} adds to $\cZ_t$  necessarily contains the newly flipped variable $|\PHI_{i_tj_t}|$, and thus
we may assume that this is the first variable that gets added to $\cN_t$.
In addition, if $\abs{\cZ_t\setminus\cZ_{t-1}}\leq\delta_t$, {\bf PI2} may add up to $k\delta_t-1$ further variables  to $\cN_t$.
To track this process, we need a bit of notation.

Let $s_1,\ldots,s_{y}$ 
be the clause indices 
that {\bf PI2} adds to $\cZ_t$, in the order in which they get added by the process.
Let $y^*=\min\cbc{y,\delta_t}$.
For each $1\leq i\leq y^*$ let $1\leq j_{i,1}<\cdots<j_{i,l_i}\leq k$ be the unique sequence of indices such that
	$\pi_{t-1}(s_i,j_{i,q})=-1$ and
		$$|\PHI_{s_i j_{i,q}}|\not\in\cbc{|\PHI_{i_tj_t}|}\cup\cN_{t-1}\cup\bigcup_{h=1}^{i-1} N(\PHI_{s_h})\cup\cbc{|\PHI_{s_i j_{i,u}}|:u<q}
			\mbox{ for all }q\leq l_i.$$
This means that $\cbc{|\PHI_{s_i j_{i,q}}|:1\leq q\leq l_i}$ are the new variables that $\PHI_{s_i}$ contributes to $\cN_t$
	and that did not belong to $\cA_{t-1}$ already.
Let $\xi_0=|\PHI_{i_tj_t}|$ and let $\xi_1,\ldots,\xi_L$ be the sequence of variables
	$|\PHI_{s_i j_{i,q}}|$ with $q=1,\ldots,l_i$ and $i=1,\ldots,y^*$.
Hence, $\xi_0,\ldots,\xi_L$ is the sequence of variables not in $\cA_{t-1}$ that {\bf PI2} adds to $\cN_t$,
	in the order in which the process adds these variables to $\cN_t$.
By our choice of $y^*$, the total number of these variables satisfies
	$$L+1\leq ky^*\leq k\delta_t.$$
Of course, $L$ and $\xi_0,\ldots,\xi_L$ are random variables.

If $\cE^0$ occurs, then 
each of the variables $\PHI_{ij}$ with $(i,j)\in\cI^0$ occurs in the sequence $\xi_0,\ldots,\xi_L$.
Hence, there exists a map $f:\cI^0\ra\cbc{0,1,\ldots,k\delta_t-1}$ such that $f(i,j)\leq L$ and
$\PHI_{ij}=\xi_{f(i,j)}$ for all $(i,j)\in\cI^0$.
For a given $f$ let $\cE^0(f)$ denote this event.
Then by the union bound,
	\begin{eqnarray}\label{eqKt9}
	p\brk{\cE^0}&\leq&\sum_{f:\cI^0\ra\cbc{0,1,\ldots,k\delta_t-1}}p\brk{\cE^0(f)}
			\leq(k\delta_t)^{|\cI^0|}\max_{f:\cI^0\ra\cbc{0,1,\ldots,k\delta_t-1}}p\brk{\cE^0(f)}.
	\end{eqnarray}

We claim that
	\begin{eqnarray}\label{eqKt10}
	p\brk{\cE^0(f)}&\leq&\max\cbc{1,|V\setminus(\cA_{t-1}\cup\cN_{t-1})|-k\delta_t}^{-|\cI^0|}
	\end{eqnarray}
for any $f$.
To prove~(\ref{eqKt10}), let
$\cI^0_l=f^{-1}(l)$ be the set of positions $(i,j)\in\cI^0$ where the variable $\xi_l$ occurs
 ($0\leq l\leq L$).
Moreover, let $\cE^0_l(f)$ be the event that 
\begin{enumerate}
\item[a.] $\PHI_{ij}=\xi_l$ for all $(i,j)\in\cI^0_l$, and
\item[b.] $\PHI_{ij}\neq\xi_l$ for all $(i,j)\in\cI^0\setminus\cI^0_l$. 
\end{enumerate}
As $\pi_{t-1}(i,j)=1$ for all $(i,j)\in\cI^0_l$, given $\cF_{t-1}$ the variables $\PHI_{ij}$ with $(i,j)\in\cI^0_l$
are independently uniformly distributed over $V\setminus(\cA_{t-1}\cup\cN_{t-1})$ by Fact~\ref{Fact_iid}.
Hence, given the event $\bigcap_{\nu<l}\cE^0_\nu(f)$,
the variables $|\PHI_{ij}|$ with $(i,j)\in\cI^0_l$ are uniformly distributed over the set
$V\setminus(\cA_{t-1}\cup\cN_{t-1}\cup\cbc{\xi_0,\ldots,\xi_{l-1}})$
(for if $\cE_\nu^0(f)$ occurs for some $\nu<l$, then $\PHI_{ij}\neq\xi_\nu$ for all $(i,j)\in\cI^0_l$).
Therefore, we obtain
	\begin{eqnarray*}
	p\brk{\cE^0_l(f)|\bigcap_{\nu<l}\cE^0_\nu(f)}&\leq&\max\cbc{1,|V\setminus(\cA_{t-1}\cup\cN_{t-1})|-l+1}^{-|\cI^0_l|}
		\quad\mbox{ for any }0\leq l\leq L.
	\end{eqnarray*}
Multiplying these conditional probabilities up for $0\leq l\leq L<k\delta_t$, we obtain~(\ref{eqKt10}).
Finally, combining~(\ref{eqKt8}), (\ref{eqKt9}), and~(\ref{eqKt10}) completes the proof.
\qed\end{proof}

\begin{corollary}\label{Cor_Kt}
For any $t\geq1$, $\delta_t\geq0$ and $\cI^*,\cI^0\subset\brk m\times\brk k$ we have
	\begin{eqnarray*}
	\Erw\brk{K_t^*(\cI^*)K_t^0(\cI^0)\vecone\cbc{|\cZ_t\setminus\cZ_{t-1}|\leq\delta_t}|\cF_{t-1}}\\
		&\hspace{-10cm}\leq&\,\,\,
		\hspace{-5cm}\max\cbc{1,|V\setminus(\cA_{t-1}\cup\cN_{t-1})|}^{-\abs{\cI^*}}
		\cdot\bcfr{k\delta_t}{\max\cbc{1,|V\setminus(\cA_{t-1}\cup\cN_{t-1})|-k\delta_t}}^{\abs{\cI^0}}.
	\end{eqnarray*}
\end{corollary}
\begin{proof}
This is immediate from
(\ref{eqKt*Kt0}) and \Cor~\ref{Cor_Kt*} and \Lem~\ref{Lemma_Kt0}.
\qed\end{proof}

Why does the bound provided by \Cor~\ref{Cor_Kt} ``make sense''?
First, observe that the only reason we need to take the max of the respective expression and one is because a priori it could happen that,
	e.g., $V\setminus(\cA_{t-1}\cup\cN_{t-1})=\emptyset$.
Apart from this issue, the first factor basically comes from the fact that for each pair $(i,j)$ with $\pi_{t-1}(i,j)=1$ the variable $\PHI_{ij}$ is
uniformly distributed over $V\setminus(\cA_{t-1}\cup\cN_{t-1})$.
Hence, it seems reasonable that the probability that one such $\PHI_{ij}$ equals the variable flipped at time $t$ is $1/|V\setminus(\cA_{t-1}\cup\cN_{t-1})|$, and that
these events occur independently.
With respect to the second factor, a similar intuition applies.
Due to the $\vecone\cbc{|\cZ_t\setminus\cZ_{t-1}|\leq\delta_t}$ factor on the left hand side,
at most $k\delta_t$ variables are added to $\cN_t$ that were not already in $\cN_{t-1}$.
Hence, for each $\PHI_{ij}$ with $\pi_{t-1}(i,j)=1$ there are now $k\delta_t$ ``good'' cases that would make $K^0_t(i,j)=1$.
Moreover, as we reveal the $k\delta_t$ variables, there remain at least $|V\setminus(\cA_{t-1}\cup\cN_{t-1})|-k\delta_t$ ``possible'' cases.
We will now establish the following.

\begin{proposition}\label{Prop_DD}
\Whp\ we have either $\cZ_{T^*}>\eps n$ or $|\cD_{T^*}|\leq2^{2-k}m$.
\end{proposition}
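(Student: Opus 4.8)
The plan is to work throughout on the event $\cG=\{|\cZ_{T^*}|\le\eps n\}$ and to show that $\pr[\,|\cD_{T^*}|>2^{2-k}m\ \text{and}\ \cG\,]=o(1)$. First, since $|\cD_0|=|\{i\in\brk m:\PHI_i\ \text{all-negative}\}|$ has distribution $\Bin(m,2^{-k})$ with $2^{-k}m=\rho n/k\to\infty$, a Chernoff bound gives $|\cD_0|\le\tfrac{11}{10}2^{-k}m$ \whp. Hence it suffices to prove that, with $s_0=\lceil 2\cdot 2^{-k}m\rceil$, one has $\pr[\,|\cD_{T^*}\setminus\cD_0|\ge s_0\ \text{and}\ \cG\,]=o(1)$, because on the complementary event $|\cD_{T^*}|\le\tfrac{11}{10}2^{-k}m+s_0<2^{2-k}m$. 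I would bound this by a union bound over all $s_0$-element sets $S\subset\brk m$ of clause indices, $\pr[\exists S:|S|=s_0,\ S\subseteq\cD_{T^*}\setminus\cD_0,\ \cG]\le\sum_S\pr[\,S\subseteq\cD_{T^*}\setminus\cD_0\ \text{and}\ \cG\,]$, and estimate each summand using the $K$-machinery.

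Fix such an $S$ and write $P_i=\{j\in\brk k:\sign(\PHI_{ij})=1\}$ (an $\cF_0$-measurable set). If some $i\in S$ has $P_i=\emptyset$ then $i\in\cD_0$ and the summand vanishes, so assume $|P_i|\ge1$ for all $i\in S$ and put $\Sigma_S=\sum_{i\in S}|P_i|$. Since $\cA_t\cup\cN_t$ is non-decreasing in $t$, for $j\in P_i$ the variable $|\PHI_{ij}|$ enters $\cA\cup\cN$ at most once, and at that step exactly one of $K^*_t(i,j),K^0_t(i,j)$ equals $1$; thus $\sum_{t\le T^*}(K^*_t(i,j)+K^0_t(i,j))=\vecone\{|\PHI_{ij}|\in\cA_{T^*}\cup\cN_{T^*}\}$, so $\vecone\{S\subseteq\cD_{T^*}\}=\prod_{i\in S}\prod_{j\in P_i}\sum_{t\le T^*}(K^*_t(i,j)+K^0_t(i,j))$. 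Expanding gives $\vecone\{S\subseteq\cD_{T^*}\}=\sum_{\epsilon,\phi}\prod_{t\le T^*}K^*_t(\cI^*_t)K^0_t(\cI^0_t)$, where $\epsilon$ assigns a label in $\{*,0\}$ and $\phi$ a value in $\brk{T^*}$ to each of the $\Sigma_S$ pairs $(i,j)$ with $i\in S,\ j\in P_i$, and $\cI^*_t,\cI^0_t$ are the pairs with $\phi$-value $t$ and $\epsilon$-label $*$, resp.\ $0$. Next I would peel off $\cG$ by writing $\vecone_\cG=\sum_{(d_t)}\vecone\{|\cZ_t\setminus\cZ_{t-1}|=d_t\ \forall t\le T^*\}$, summed over nonnegative integer sequences with $\sum_t d_t\le\eps n$ (any such sequence automatically forces every $|\cZ_t|\le\eps n$). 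For a fixed $(d_t)$, applying \Lem~\ref{Lemma_filt} to $X_t=K^*_t(\cI^*_t)K^0_t(\cI^0_t)\cdot\vecone\{|\cZ_s\setminus\cZ_{s-1}|=d_s\ \forall s\le t\}$ together with \Cor~\ref{Cor_Kt} with $\delta_t=d_t$ — and noting that on the relevant $\cF_{t-1}$-event $|V\setminus(\cA_{t-1}\cup\cN_{t-1})|\ge n-T^*-k\eps n=(1-o_k(1))n$ — yields $\Erw[\prod_{t\le T^*}X_t\mid\cF_0]\le\prod_{t\le T^*}((1-o_k(1))n)^{-|\cI^*_t|}((1+o_k(1))kd_t/n)^{|\cI^0_t|}$. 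Summing over $\phi$ contributes a factor $(1+o_k(1))\theta$ per $*$-pair (since $\sum_{t\le T^*}1/((1-o_k(1))n)=(1+o_k(1))\theta$, $\theta=\tfrac1{3k}$) and a factor $(1+o_k(1))(k/n)\sum_t d_t\le(1+o_k(1))k\eps$ per $0$-pair — this collapse of the sum over the $T^*$ possible times, forced by $\sum_t d_t\le\eps n$, is the reason for the $(d_t)$-decomposition. Summing over $\epsilon$ then gives $((1+o_k(1))\theta+(1+o_k(1))k\eps)^{\Sigma_S}=((1+o_k(1))\theta)^{\Sigma_S}$ because $k\eps=k\exp(-k^{2/3})=o_k(\theta)$, and this no longer depends on $(d_t)$; summing over the $(d_t)$ thus only multiplies by $\binom{\eps n+T^*}{\eps n}\le\exp(\eps n\cdot k^{2/3})$. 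So $\pr[\,S\subseteq\cD_{T^*}\ \text{and}\ \cG\mid\cF_0\,]\le\exp(\eps n\,k^{2/3})\cdot((1+o_k(1))\theta)^{\Sigma_S}$.

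I would then average over $\cF_0$. Since $\PHI$ consists of independent clauses, the $|P_i|$ are i.i.d.\ $\Bin(k,\tfrac12)$, whence $\Erw_{\cF_0}\big[\sum_{|S|=s_0}\prod_{i\in S}((1+o_k(1))\theta)^{|P_i|}\vecone\{|P_i|\ge1\}\big]=\binom m{s_0}\nu^{s_0}$ with $\nu=\Erw[((1+o_k(1))\theta)^{|P_i|}\vecone\{|P_i|\ge1\}]=2^{-k}\big((1+(1+o_k(1))\theta)^k-1\big)\to(\eul^{1/3}-1)2^{-k}$, using $(1+\tfrac1{3k})^k\to\eul^{1/3}$. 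Hence $\sum_S\pr[\,S\subseteq\cD_{T^*}\setminus\cD_0\ \text{and}\ \cG\,]\le\exp(\eps n\,k^{2/3})\binom m{s_0}\nu^{s_0}\le\exp(\eps n\,k^{2/3})\bcfr{\eul m\nu}{s_0}^{s_0}$. Because $s_0=\lceil 2\cdot 2^{-k}m\rceil$ and $\eul m\nu/s_0\to\eul(\eul^{1/3}-1)/2<1$ (note $\eul(\eul^{1/3}-1)\approx1.08$), the factor $(\eul m\nu/s_0)^{s_0}$ is $\exp(-\Omega(2^{-k}m))=\exp(-\Omega(\rho n/k))$; and $\eps n\,k^{2/3}=\exp(-k^{2/3})k^{2/3}n=o(n/k^3)\le o(\rho n/k)$ since $\rho\ge k^{-2}$. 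Therefore the whole sum is $o(1)$, which finishes the proof.

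The step I expect to be the real obstacle is precisely the handling of the variables that {\bf PI2} throws into $\cN$: a single flip can move many positive literals of a clause into $\cN$ at once, so the naive expansion that assigns an independent time step to each ``kill'' overcounts catastrophically — it produces a spurious factor roughly $(\eps n)^{\#\{0\text{-pairs}\}}$ instead of $(k\eps)^{\#\{0\text{-pairs}\}}$. The remedy in the plan above — decomposing $\cG$ according to the full profile $(d_t)_t$ of per-step growths of $\cZ$, so that \Cor~\ref{Cor_Kt}'s bound can be invoked with the genuine deterministic values $\delta_t=d_t$, and then exploiting $\sum_t d_t=|\cZ_{T^*}|\le\eps n$ — is where the argument is delicate. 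Its one cost is the combinatorial factor $\binom{\eps n+T^*}{\eps n}$ for enumerating profiles, and this is absorbed only because $\eps=\exp(-k^{2/3})$ is sub-polynomially small; this is exactly where the choice of $\eps$ in~(\ref{eqlambdaeps}) is used.
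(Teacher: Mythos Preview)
Your proposal is correct and follows essentially the same route as the paper. Both arguments union-bound over a set of clause indices of size $\Theta(2^{-k}m)$, decompose the event $\{|\cZ_{T^*}|\le\eps n\}$ according to the per-step increment profile $(\delta_t)_t$ (incurring the factor $\binom{\eps n+T^*}{\eps n}$), invoke \Cor~\ref{Cor_Kt} together with \Lem~\ref{Lemma_filt} to bound the contribution of each time-labeling, and then sum over labelings and time-assignments. The only organisational differences are cosmetic: you first peel off $\cD_0$ via a Chernoff bound and then work with $\cD_{T^*}\setminus\cD_0$ and sets of size $s_0=2\cdot2^{-k}m$, whereas the paper treats all of $\cD_{T^*}$ at once with sets of size $\mu=2^{2-k}m$; and you average over $\cF_0$ at the end using the generating function $\Erw[z^{|P_i|}]=((1+z)/2)^k$, whereas the paper sums explicitly over the partition $Q^*,Q^0$ of the positive positions and applies the multinomial theorem. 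These lead to slightly different but equally valid final constants ($\eul(\eul^{1/3}-1)/2$ versus the paper's $\eul/3$), both strictly below $1$, and your identification of the $(d_t)$-decomposition as the crux --- absorbing the growth of $\cN$ step by step so that Cor~\ref{Cor_Kt} can be applied with deterministic $\delta_t$ --- is exactly what the paper does.
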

\begin{proof}
Let $\cE$ be the event that $\abs{\cZ_{T^*}}\leq\eps n$ but $\abs{\cD_{T^*}}>2^{2-k}m$.
Our goal is to show that $\pr\brk{\cE}=o(1)$.
To this end, we will decompose $\cE$ into various `sub-events' that are sufficiently
detailed for us to bound their probabilities via \Cor~\ref{Cor_Kt}.
In order to bound the probability of $\cE$ we will then use the union bound.

As a first step, we need to decompose $\cE$ according to the sequence $(\abs{\cZ_t\setminus\cZ_{t-1}})_{t\geq1}$
of increments of the sets $\cZ_t$.
More precisely, let $\Delta$ be the set of all sequences $\vec\delta=(\delta_t)_{1\leq t\leq T^*}$ of non-negative integers with
$\sum_{t=1}^{T^*}\delta_t\leq\eps n$.
Let $\cE(\vec\delta)$ be the event that $\abs{\cZ_t\setminus\cZ_{t-1}}\leq\delta_t$ for all $1\leq t\leq T^*$ and $\abs{\cD_{T^*}}>2^{2-k}m$.
If the event $\cE$ occurs, then there is a sequence $\vec\delta$ such that the event $\cE(\vec\delta)$ occurs.
Hence, by the union bound
	$$\pr\brk{\cE}\leq\sum_{\vec\delta\in\Delta}\pr\brk{\cE(\vec\delta)}\leq\abs\Delta\cdot\max_{\vec\delta\in\Delta}\pr\brk{\cE(\vec\delta)}.$$
As it is well known that $\abs\Delta=\bink{\eps n+T^*-1}{T^*-1}\leq\bink{\eps n+T^*}{\eps n}$, we obtain
	\begin{equation}\label{eqDD1}
	\pr\brk{\cE}\leq\bink{\eps n+T^*}{\eps n}\max_{\vec\delta\in\Delta}\pr\brk{\cE(\vec\delta)}.
	\end{equation}

Fixing any sequence $\vec\delta\in\Delta$,
	we now decompose the event $\cE(\vec\delta)$ further according to the
	precise set $M$ of clauses that end up in $\cD_{T^*}$, and according to the precise `reason' why each clause $i\in M$
	belongs to $\cD_{T^*}$.
More precisely, let $M\subset\brk m$ be a set of size $\mu=2^{2-k}m$.
Moreover, 
for disjoint $Q^*,Q^0\subset M\times\brk k$ let
$\cE_0(Q^*,Q^0)$ be the event that 
	$$\pi_0(i,j)=1\mbox{ for $(i,j)\in Q^*\cup Q^0$, while }
		\pi_0(i,j)=-1\mbox{ for $(i,j)\in M\times\brk k\setminus(Q^*\cup Q^0)$.}$$
Furthermore, for maps $\tau^*:Q^*\ra\brk{T^*}$, $\tau^0:Q^0\ra\brk{T^*}$ let $\cE(\vec\delta,\tau^*,\tau^0)$ be the event that
$\abs{\cZ_t\setminus\cZ_{t-1}}\leq\delta_t$ for all $1\leq t\leq T^*$ and
	\begin{eqnarray*}
	\pi_{\tau^*(i,j)-1}(i,j)=1&\mbox{while}&\PHI_{ij}\in\cA_{\tau^*(i,j)}\mbox{ for all }(i,j)\in Q^*,\\
	\pi_{\tau^0(i,j)-1}(i,j)=1&\mbox{while}&\PHI_{ij}\in\cN_{\tau^0(i,j)}\mbox{ for all }(i,j)\in Q^0.
	\end{eqnarray*}

If the event $\cE(\vec\delta)$ occurs, then there exist $Q^*,Q^0$ and $\tau^*,\tau^0$ such that
the events $\cE_0(Q^*,Q^0)$ and $\cE(\vec\delta,\tau^*,\tau^0)$ occur.
In fact, if $\cE(\vec\delta)$ occurs, then $\abs{\cD_{T^*}}\geq\mu$.
Thus, select a subset $M\subset \cD_{T^*}$ of size $\mu$.
By the definition of $\cD_{T^*}$, each $i\in M$ is $\cA_{T^*}\cup\cN_{T^*}$-negative.
Thus, for any $j\in\brk k$ such that $\PHI_{ij}$ is a positive literal
there is a time $1\leq t=t(i,j)\leq T^*$ such that $\pi_{t-1}(i,j)=1$ but $\pi_{t}(i,j)\in\cA_{t}\cup\cN_{t}$.
If $\pi_{t}(i,j)\in\cA_{t(i,j)}$, then include $(i,j)$ in $Q^*$ and set $\tau^*(i,j)=t$.
Otherwise, add $(i,j)$ to $Q^0$ and let $\tau^0(i,j)=t$.
Then indeed both $\cE_0(Q^*,Q^0)$ and $\cE(\vec\delta,\tau^*,\tau^0)$ occur.
Thus, by the union bound,
	\begin{equation}\label{eqDD2}
	\pr\brk{\cE(\vec\delta)}\leq\sum_{Q^*,Q^0,\tau^*,\tau^0}\pr\brk{\cE_0(Q^*,Q^0)\cap\cE(\vec\delta,\tau^*,\tau^0)}.
	\end{equation}

The event $\cE_0(Q^*,Q^0)$ depends only on the signs of the literals and is therefore $\cF_0$-measurable.
Furthermore, as signs of the literals $\PHI_{ij}$ are mutually independent, we get
	$$\pr\brk{\cE_0(Q^*,Q^0)}=2^{-k\mu}.$$
Therefore, (\ref{eqDD2}) yields	
	\begin{equation}\label{eqDD3}
	\pr\brk{\cE(\vec\delta)}\leq2^{-k\mu}\sum_{Q^*,Q^0,\tau^*,\tau^0}\pr\brk{\cE(\vec\delta,\tau^*,\tau^0)|\cF_0}.
	\end{equation}
Thus, we are left to estimate $\pr\brk{\cE(\vec\delta,\tau^*,\tau^0)|\cF_0}$.

We defined the random variables $K_t^*(\cdot,\cdot)$, $K_t^0(\cdot,\cdot)$ so that if the event $\cE(\vec\delta,\tau^*,\tau^0)$ occurs, then 
	$$\prod_{(i,j)\in Q^*}K_{\tau^*(i,j)}(i,j)\cdot\prod_{(i,j)\in Q^0}K_{\tau^0(i,j)}(i,j)\cdot\prod_{t=1}^{T_*}\vecone\cbc{|\cZ_t\setminus\cZ_{t-1}|\leq\delta_t}=1.$$
In order to apply \Cor~\ref{Cor_Kt} to the above expression, we are going to reorder the product according to the time parameter.
More precisely, let $Q_t^*=\tau^{*\,-1}(t)$ and $Q_t^0=\tau^{0\,-1}(t)$.
Then 
	\begin{eqnarray*}
	\pr\brk{\cE(\vec\delta,\tau^*,\tau^0)|\cF_0}
		&\leq&\Erw\brk{\prod_{(i,j)\in Q^*}K_{\tau^*(i,j)}(i,j)\prod_{(i,j)\in Q^0}K_{\tau^0(i,j)}(i,j)\prod_{t=1}^{T_*}\vecone\cbc{|\cZ_t-\cZ_{t-1}|\leq\delta_t}=1|\cF_0}\\
		&=&\Erw\brk{\prod_{t=1}^{T_*}K_{t}(Q_t^*)K_{t}(Q_t^0)\cdot\vecone\cbc{|\cZ_t\setminus\cZ_{t-1}|\leq\delta_t}=1|\cF_0}.
	\end{eqnarray*}
If $|\cZ_t\setminus\cZ_{t-1}|\leq\delta_t$ for all $t\leq T^*$, then $|\cN_{t-1}|+k\delta_t\leq k\sum_{s\leq t}\delta_t\leq k\eps n$ for all $t\leq T^*$.
Furthermore, $\abs{\cA_t}\leq t\leq T^*=\frac{n}{k}$ for all $t\geq0$.
Hence, $|V\setminus(\cA_{t-1}\cup\cN_{t-1})|-k\delta_t\geq n(1-k\eps-1/k)\geq n/1.01$ for all $t\leq T^*$,
provided that $k\geq k_0$ is large enough.
Thus, \Cor~\ref{Cor_Kt} entails in combination with \Lem~\ref{Lemma_filt}
	\begin{eqnarray}
	\pr\brk{\cE(\vec\delta,\tau^*,\tau^0)|\cF_0}
		&=&\bcfr{1.01}{n}^{|Q^*|}\cdot\prod_{(i,j)\in Q^0}\frac{1.01k\delta_{\tau^0(i,j)}}{n}.\label{eqDD4}
	\end{eqnarray}

For any $M\subset\brk m$ of size $\mu$ and any two disjoint $Q^*,Q^0\subset M\times\brk k$ let
	$$S(M,Q^*,Q^0)=\sum_{\tau^*,\tau^0}\bcfr{1.01}{n}^{|Q^*|}\cdot\prod_{(i,j)\in Q^0}\frac{1.01k\delta_{\tau^0(i,j)}}{n},$$
with the sum ranging over all maps $\tau^{*}:Q^{*}\ra\brk{T^*}$, $\tau^{0}:Q^{0}\ra\brk{T^*}$.
Recall that $\theta=T^*/n$.
As $\sum_{t\leq T^*}\delta_t\leq\eps n$, we obtain
	\begin{eqnarray}\nonumber
	S(M,Q^*,Q^0)&\leq&\bcfr{1.01 T^*}{n}^{|Q^*|}\bcfr{1.01k}{n}^{|Q^0|}\sum_{\tau^0}\prod_{(i,j)\in Q^0}\delta_{\tau^0(i,j)}\\
		&=&\bcfr{1.01 T^*}{n}^{|Q^*|}\bcfr{1.01k}{n}^{|Q^0|}\bc{\sum_{t=1}^{T^*}\delta_t}^{|Q^0|}
		\leq\bc{1.01\theta}^{|Q^*|}\bc{1.01\eps k}^{|Q^0|}.
	\label{eqDD5}
	\end{eqnarray}
Combining (\ref{eqDD3}), (\ref{eqDD4}), and~(\ref{eqDD5}), we thus get for any $\vec\delta\in\Delta$
	\begin{eqnarray}
	\pr\brk{\cE(\vec\delta)}&\leq&2^{-k\mu}
		\sum_{M\subset\brk m:|M|=\mu}\sum_{Q^*,Q^0\subset M\times\brk k:Q^*\cap Q^0=\emptyset}S(M,Q^*,Q^0)\nonumber\\
	&\leq&2^{-k\mu}\bink{m}\mu
		\sum_{q^*,q^0:q^*+q^0\leq k\mu}\sum_{Q^*,Q^0:|Q^*|=q^*,|Q^0|=q^0}
			\bc{1.01\theta}^{q^*}\bc{1.01\eps k}^{q^0}\nonumber\\
	&\leq&2^{-k\mu}
		\bink{m}\mu\sum_{q^*,q^0:q^*+q^0\leq k\mu}
		\bink{k\mu}{q^*,q^0,k\mu-q^*-q^0}(1.01\theta)^{q^*}\bc{1.01k\eps}^{q^0}\nonumber\\
	&\leq&\bink{m}\mu\bcfr{1+1.01(\theta+k\eps)}2^{k\mu}\nonumber
		\leq\brk{\frac{\eul m}\mu\cdot \bcfr{1+1.01(\theta+k\eps)}2^{k}}^\mu\nonumber\\
		&\leq&\brk{\eul 2^{k-2}\cdot \bcfr{1+1.01(\theta+k\eps)}2^{k}}^\mu\leq0.999^\mu,
			\label{eqDD6}
	\end{eqnarray}
provided that $k\geq k_0$ is sufficiently big. 
Finally, combining~(\ref{eqDD1}) and~(\ref{eqDD6}), we obtain
	\begin{eqnarray}
	\pr\brk{\cE}&\leq&\bink{\eps n+T^*}{\eps n}0.999^\mu
		\leq\bcfr{\eul(\eps n+\theta n)}{\eps n}^{\eps n}0.999^\mu
		\leq\bc{\eul(1+\theta/\eps)}^{\eps n}0.999^\mu.
			\label{eqzw}
	\end{eqnarray}
By our assumption that $\rho\geq k^{-3}$ (cf.\ the first paragraph in \Sec~\ref{Sec_Outline}),
	we have $\mu=2^{2-k}m\geq\rho n/k\geq k^{-4}n$.
Hence, recalling that $\theta\leq1/k$ and $\eps=\exp(-k^{2/3})$ (cf.~(\ref{eqlambdaeps})),
	we obtain from~(\ref{eqzw})
	\begin{eqnarray*}
	\pr\brk{\cE}&\leq&\exp\brk{n\bc{\eps\ln(2\eul/\eps)-k^{-4}}}
		\leq\exp\brk{n\bc{k\exp(-k^{2/3})+k^{-4}\ln0.999}}=\exp(-\Omega(n))=o(1),
	\end{eqnarray*}
provided that $k\geq k_0$ is sufficiently large.
\qed\end{proof}

\noindent
Finally, \Prop~\ref{Prop_D} is immediate from \Prop s~\ref{Prop_Z} and~\ref{Prop_DD}.

\section{Proof of \Prop~\ref{Prop_Z}}\label{Sec_Z}

{\em Throughout this section we keep the notation and the assumptions of \Prop~\ref{Prop_Z}.}

\subsection{Outline}

The goal in this section is to bound the size of the set $\cZ_{T^*}$.
There are two reasons why step {\bf PI2} may add a clause index $i\in\brk m$ to the set $\cZ_{t}$ for some $1\leq t\leq T^*$.
First, the clause $\PHI_i$ may feature at least $k_1$ variables from the set $\cA_{t-1}\cup\cbc{|\PHI_{i_tj_t}|}$,
i.e., variables that have been flipped at least once.
Second, $\PHI_i$ may contain at least $\lambda$ variables that also occur in clauses that were added to $\cZ_{t}$ previously.
The key issue is to deal with the first case.
Once that is done, we can bound the number of clauses that get included for the second reason
via \Lem~\ref{Lemma_core}, i.e., via the expansion properties of the random formula.

Thus, we need to investigate how a clause $\PHI_i$ comes to contain a lot of variables from $\cA_{t-1}\cup\cbc{|\PHI_{i_tj_t}|}$
for some $t\leq T^*$.
There are two ways in which this may occur.
First, \Walksat\ may have tried to satisfy $\PHI_i$ `actively' several times, i.e., $i_s=i$ for several $s\leq t$.
Second, $\PHI_i$ may contain several of the variables $|\PHI_{i_sj_s}|$ flipped at times $s<t$
`accidentally', i.e., without \Walksat\ trying to actively satisfy $i$.
More precisely, for any $t\geq0$ we call a pair $(i,j)\in\brk m\times\brk k$ 
\begin{enumerate}
\item[$\bullet$] \emph{$t$-active} if there is $1\leq s\leq t$ such that
	$(i,j)=(i_s,j_s)$ and $\pi_{s-1}(i,j)=-1$.
\item[$\bullet$] \emph{$t$-passive} if there is $1\leq s\leq t$ such that $(i,j)\neq(i_s,j_s)$
	but $\abs{\PHI_{ij}}=\abs{\PHI_{i_sj_s}}$ and $\pi_{s-1}(i,j)\in\cbc{-1,1}$.
\end{enumerate}
Furthermore, we say that $i\in\brk m$ is \emph{$t$-active} if there are $k_2=k_1-10^{-6}k$ indices $j$ such that $(i,j)$ is $t$-active.
Similarly, we say that $i$ is \emph{$t$-passive} if there are $k_3=10^{-6}k$ indices $j$ such that $(i,j)$ is $t$-passive.
These definitions ensure that any $i\in\brk m$ for which there are at least $k_1$ indices $j\in\brk k$ such that
$|\PHI_{ij}|\in\cA_{t-1}\cup\cbc{|\PHI_{i_tj_t}|}$ is either $t$-active or $t$-passive.

To prove \Prop~\ref{Prop_Z}, we will deal separately with $t$-active and $t$-passive clauses.
Let $A_t$ be the number of $t$-active clauses, and let $P_t$ be the number of $t$-passive clauses.

\begin{lemma}\label{Lemma_active}
For any $1\leq t\leq T^*$  we have
	$\pr\brk{A_t< \eps n/4\vee|\cZ_{t}|>\eps n}\geq1-1/n^2.$
\end{lemma}
We defer the proof of \Lem~\ref{Lemma_active} to \Sec~\ref{Sec_active}.

\begin{lemma}\label{Lemma_passive}
For any $1\leq t\leq T^*$  we have
$\pr\brk{P_t< \eps n/4\vee|\cZ_{t-1}|>\eps n
		}\geq1-1/n^2$.
\end{lemma}
\begin{proof}
As in the proof of \Prop~\ref{Prop_DD}, 
we are going to break the event of interest, i.e.,
	$$\cE=\cbc{P_t\geq\eps n/4\wedge|\cZ_{t-1}|\leq\eps n},$$ down into sub-events
whose probabilities can be estimated via \Lem~\ref{Lemma_Kt*}.
Then we will use the union bound to estimate the probability of $\cE$.

For a set $M\subset\brk m$ of $\mu=\eps n/4$ clause indices let $\cE(M)$ be the event that $|\cZ_{t-1}|\leq\eps n$ and all $i\in M$ are $t$-passive.
If $\cE$ occurs, then there is a set $M$ such that the event $\cE(M)$ occurs.
Hence, by the union bound
	\begin{eqnarray}\label{eqpassive1}
	\pr\brk{\cE}&\leq&\sum_{M\subset\brk m:|M|=\mu}\pr\brk{\cE(M)}\leq\bink m\mu\max_M\pr\brk{\cE(M)}.
	\end{eqnarray}

Thus, fix a set $M\subset\brk m$ of size $\mu$.
Let $Q\subset M\times\brk k$ be a set such that for each $i\in M$ there are precisely $k_3$ indices $j\in\brk k$ such that $(i,j)\in Q$.
Let $\cE(M,Q)$ be the event that $|\cZ_{t-1}|\leq\eps n$ and
all pairs $(i,j)\in Q$ are $t$-passive.
If the event $\cE(M)$ occurs, then there exists a set $Q$ such that $\cE(M,Q)$ occurs.
Therefore, again by the union bound
	\begin{eqnarray}\label{eqpassive2}
	\pr\brk{\cE(M)}&\leq&\sum_{Q}\pr\brk{\cE(M,Q)}\leq\bink k{k_3}^\mu\max_Q\pr\brk{\cE(M,Q)}.
	\end{eqnarray}

For a map $\tau:Q\ra\brk t$ let $\cE(M,Q,\tau)$ be the event that $|\cZ_{t-1}|\leq\eps n$ and
	$$\tau(i,j)=\min\cbc{s\in\brk t:(i,j)\mbox{ is $s$-passive}}\mbox{ for all }(i,j)\in Q.$$
If the event $\cE(M,Q)$ occurs, then there is a map $\tau$ such that the event $\cE(M,Q,\tau)$ occurs.
Consequently, for any $M,Q$ we have
	\begin{eqnarray}\label{eqpassive3}
	\pr\brk{\cE(M,Q)}&\leq&\sum_{\tau}\pr\brk{\cE(M,Q,\tau)}\leq t^{|Q|}\max_{\tau}\pr\brk{\cE(M,Q,\tau)}.
	\end{eqnarray}
Combining~(\ref{eqpassive1}), (\ref{eqpassive2}), and~(\ref{eqpassive3}), we see that
	\begin{eqnarray}\label{eqpassive4}
	\pr\brk{\cE}&\leq&\bink m\mu\bink k{k_3}^\mu t^{k_3\mu}\max_{M,Q,\tau}\pr\brk{\cE(M,Q,\tau)}.
	\end{eqnarray}

Hence, fix any $M,Q,\tau$.
Let $Q_s=\tau^{-1}(s)$ for any $1\leq s\leq t$,
	and let $\cE_s^*(Q_s)$ be the event that
		$|\PHI_{ij}|=|\PHI_{i_tj_t}|\not\in\cA_{t-1}\cup\cN_{t-1}$ for all $(i,j)\in Q_s$, and $(i_t,j_t)\not\in Q_s$.
If $\cE(M,Q,\tau)$ occurs, then the events $\cE_s^*(Q_s)$ occur for all $1\leq s\leq t$.
Moreover, the construction {\bf PI0--PI3} ensures that $|\cA_s|\leq s$, and that
	 $|\cN_{s-1}|\leq k|\cZ_{s-1}|\leq k\eps n$ for all $1\leq s\leq t$.
Therefore, \Lem~\ref{Lemma_Kt*} implies
	\begin{eqnarray}
	\pr\brk{\cE(M,Q,\tau)}&\leq&\pr\brk{\bigcap_{s=1}^t\cE_s^*(Q_s)\cap\cbc{|\cN_{s-1}|\leq k\eps n}}
		\leq\prod_{s=1}^t\max\cbc{1,n-s+1-k\eps n}^{-|Q_s|}.
			\label{eqpassive5a}
	\end{eqnarray}
As $s\leq t\leq T^*\leq n/k$, $\eps=\exp(-k^{2/3})$, and because we are assuming that $k\geq k_0$ is sufficiently large,
we have $n-s+1-k\eps n\geq n/1.001$.
Hence, (\ref{eqpassive5a}) yields
	\begin{eqnarray}
	\pr\brk{\cE(M,Q,\tau)}&\leq&
		\prod_{s=1}^t\max\cbc{1,n-s+1-k\eps n}^{-|Q_s|}\leq(1.001/n)^{\mu k_3}.
			\label{eqpassive5}
	\end{eqnarray}
Finally, combining~(\ref{eqpassive4}) and~(\ref{eqpassive5}) and recalling that $\theta=T^*/n$, we get
	\begin{eqnarray*}
	\pr\brk{\cE}&\leq&\bink m\mu\bink k{k_3}^\mu t^{k_3\mu}(1.001/n)^{\mu k_3}
			\leq\brk{\frac{\eul m}{\mu}\cdot\bcfr{1.001\eul k\theta}{k_3}^{k_3}}^\mu
				\leq\brk{\frac{4\eul 2^k\rho}{\eps k}\bcfr{1.001\eul k\theta}{k_3}^{k_3}}^\mu.
	\end{eqnarray*}
By our choice of $\theta$ we have $1.001\eul k\theta\leq10$.
Hence, we obtain for $k\geq k_0$ large enough
	\begin{eqnarray*}
	\pr\brk{\cE}&\leq&
				\brk{\frac{4\eul 2^k\rho}{\eps k}k_3^{-k_3/2}}^\mu\leq\exp(-\mu)=o(1),
	\end{eqnarray*}
thereby completing the proof.
\qed\end{proof}

\noindent\emph{Proof of \Prop~\ref{Prop_Z}.}
In order to bound $|\cZ_t|$ for $0\leq t\leq T^*$, we are going to consider a superset $\cY_t\supset\cZ_t$ whose
size is easier to estimate.
To define $\cY_t$, we let $\cY_t^*$ be the set of all $i$ that are either $t$-active or $t$-passive.
Now, $\cY_t$ is the outcome of the following process.
\begin{quote}
Initially, let $\cY_t=\cY_t^*$.\\
While there is a clause $i\in\brk m\setminus\cY_t$ such that
	$\abs{\cbc{j\in\brk k:|\PHI_{ij}|\in N(\PHI_{\cY_t})}}\geq\lambda,$
	add $i$ to $\cY_t$.
\end{quote}
Comparing the above process with the construction in {\bf PI2}, we see that indeed
	\begin{equation}\label{eqYZ}
	\cY_t\supset\cZ_t.
	\end{equation}
Also note that $\cY_t\supset\cY_{t-1}$ for all $t\geq1$.

To bound $|\cY_t|$, we proceed by induction on $t$.
Let $Y_t$ be the event that either the random formula $\PHI$ violates the property~(\ref{eqcore}), or
	$\abs{\cY_t}>\eps n$.
We claim that $\pr\brk{Y_0}=o(1)$ and that
	\begin{equation}\label{eqyt}
	\pr\brk{Y_t}\leq\pr\brk{Y_{t-1}}+2n^{-2}\qquad\mbox{for all }1\leq t\leq T^*.
	\end{equation}

Since trivially $\cY_0=\emptyset$, $Y_0$ is simply the event that $\PHI$ violates~(\ref{eqcore}).
Hence, \Lem~\ref{Lemma_core}  shows directly that
	\begin{equation}\label{eqY0}
	\pr\brk{Y_0}=o(1).
	\end{equation}
Now, consider some $1\leq t\leq T^*$.
 \Lem s~\ref{Lemma_active} (applied to $t-1$) and \Lem~\ref{Lemma_passive} (applied to $t$) show that
	$$\pr\brk{A_t+P_t\leq\eps n/2\vee|\cZ_{t-1}|>\eps n}\geq1-2/n^2.$$
Furthermore, if $Y_{t-1}$ does not occur, then we know that $|\cZ_{t-1}|\leq|\cY_{t-1}|\leq\eps n$
and that~(\ref{eqcore}) is satisfied.
If in addition $A_t+P_t\leq\eps n/2$, then (\ref{eqcore})  ensures that
$|\cY_t|\leq\eps n$, and thus $Y_t$ does not occur.
Therefore,

	\begin{eqnarray*}
	\pr\brk{Y_t}&=&\pr\brk{Y_{t-1}}+\pr\brk{Y_t\setminus Y_{t-1}}
		\leq\pr\brk{Y_{t-1}}+\pr\brk{A_t+P_t>\frac{\eps n}2\wedge|\cZ_{t-1}|\leq\eps n}\leq \pr\brk{Y_{t-1}}+2/n^{2}.
	\end{eqnarray*}

Finally, (\ref{eqyt}) and~(\ref{eqY0}) yield
	$$\pr\brk{|\cY_{T^*}|>\eps n}\leq\pr\brk{Y_{T^*}}\leq\pr\brk{Y_0}+\sum_{t=1}^{T^*}2/n^2= o(1)+2T^*/n^2=o(1).$$
In combination with~(\ref{eqYZ}), this implies the assertion.
\qed

\subsection{Proof of \Lem~\ref{Lemma_active}}\label{Sec_active}

How can a clause $\PHI_i$ become $t$-active?
If this occurs, then \Walksat\ must have tried `actively' to satisfy $\PHI_i$ at least $k_2$ times by flipping one of its variables.
But each time, the variable that \Walksat\ flipped to satisfy $\PHI_i$ got flipped again because flipping it rendered another clause unsatisfied.

More precisely, if $\PHI_i$ is $t$-active, then there exist distinct `slots' $j_1,\ldots,j_{k_2}\in\brk k$ and times $s_1,\ldots,s_{k_2}\in\brk t$
such that $(i,j_l)$ is $s_l$-active for $l=1,\ldots,k_2$.
This means that at the times $s_l$, \Walksat\ actively tried to satisfy $\PHI_i$ by flipping $|\PHI_{i j_l}|$ ($l=1,\ldots,k_2$).
However, as \Walksat\ had to make $k_2$ attempts, each of the variables $|\PHI_{i j_l}|$ with $l<k_2$ must have been flipped
once more by time $s_{l+1}$.
Hence, $|\PHI_{i j_l}|$ occurs positively in a clause $\PHI_{h_l}$ that is unsatisfied at some time $s_l<q_l<s_{l+1}$.
In particular, $h_l\in\cD_{q_l}\subset\cD_t$.

Thus, in order to prove \Lem~\ref{Lemma_active} we are going to bound the probability that there are at least $\eps n/4$
clauses $\PHI_i$ that admit $j_1,\ldots,j_{k_2}\in\brk k$ 
such that for each $1\leq l<k_2$ there is another clause $\PHI_{h_l}$ with the following properties.
\begin{description}
\item[A1.] We have $\sign(\PHI_{i j_l})=-1$,
		and there is an index $j\in\brk k$ such that $\sign(\PHI_{h_l j})=1$ and $\PHI_{h_lj}=|\PHI_{i j_l}|$.
\item[A2.] $h_l\in\cD_t$, i.e., $\PHI_{h_l}$ is $\cA_t\cup\cN_t$-negative.
\end{description}	

In order to deal with {\bf A1} we will need to refine our filtration.
Given a subset $Q\subset\brk m\times\brk k$ and a map $g:Q\ra\brk m\times\brk k$, we let
$\Omega_g$ be the event that 
	$$\sign(\PHI_{ij})=-1,\
		\sign(\PHI_{g(i,j)})=1\mbox{ and }|\PHI_{ij}|=\abs{\PHI_{g(i,j)}}\mbox{ for all }(i,j)\in Q.$$
Since the literals of the random formula $\PHI$ are independently uniformly distributed, we see that
	\begin{equation}\label{eqOmegag}
	\pr\brk{\Omega_g}\leq2^{-|Q\cup g(Q)|}n^{-|Q|}.
	\end{equation}
We consider $\Omega_g$ as a probability space equipped with the uniform distribution
	(in other words, we are going to condition on $\Omega_g$).
Further, we define a filtration $(\cF_{g,t})_{t\geq0}$ on $\Omega_g$ 
by letting $\cF_{g,t}=\cbc{\cE\cap\Omega_g:\cE\in\cF_t}$.
In other words, $\cF_{g,t}$ is the projection of $\cF_t$ onto $\Omega_g$.
Hence, Fact~\ref{Fact_measurable} directly implies the following.

\begin{fact}\label{Fact_gmeasurable}
For any $t\geq0$, any $x\in V$, and any $i\in\brk m$ the events $\cbc{\sigma_t(x)=1}$,
	$\cbc{\mbox{$\PHI_i$ is satisfied under $\sigma_t$}}$,
		$\cbc{x\in\cA_t}$, $\cbc{i\in\cZ_t}$, $\cbc{x\in\cN_t}$, and $\cbc{T=t}$ are $\cF_{g,t}$-measurable.
\end{fact}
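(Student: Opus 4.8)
The statement is essentially a transfer of Fact~\ref{Fact_measurable} from the space $\Omega_k(n,m)$ to the conditioned space $\Omega_g$, and the plan is to argue that measurability is preserved under this restriction. First I would recall the definitions: $\Omega_g\subset\Omega_k(n,m)$ is an event (a set of formulas), and $\cF_{g,t}=\cbc{\cE\cap\Omega_g:\cE\in\cF_t}$ is by construction the trace $\sigma$-algebra of $\cF_t$ on $\Omega_g$. It is a standard fact that if $(\Omega,\cF)$ is a measurable space, $B\subset\Omega$, and $\cF_B=\cbc{\cE\cap B:\cE\in\cF}$ is the trace, then for any $\cF$-measurable event $A$ the set $A\cap B$ is $\cF_B$-measurable. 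So the whole argument is: apply Fact~\ref{Fact_measurable} to get that each of the six events $\cbc{\sigma_t(x)=1}$, $\cbc{\PHI_i\text{ satisfied under }\sigma_t}$, $\cbc{x\in\cA_t}$, $\cbc{i\in\cZ_t}$, $\cbc{x\in\cN_t}$, $\cbc{T=t}$ lies in $\cF_t$, and then intersect with $\Omega_g$ to conclude it lies in $\cF_{g,t}$.

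Concretely, I would write: let $A$ denote any one of the six events in question. By Fact~\ref{Fact_measurable}, $A\in\cF_t$. Hence $A\cap\Omega_g\in\cF_{g,t}$ by the very definition of $\cF_{g,t}$. But inside the probability space $\Omega_g$, the event ``$A$'' \emph{is} the set $A\cap\Omega_g$, since all points of $\Omega_g$ that satisfy the defining condition of $A$ are exactly those in $A\cap\Omega_g$. Therefore $A$ is $\cF_{g,t}$-measurable when viewed as an event in $\Omega_g$. Repeating this for each of the six events gives the claim. One small wrinkle worth a sentence: the random variables $\sigma_t$, $\cA_t$, $\cZ_t$, $\cN_t$, $T$ are defined pointwise on each formula $\Phi$ regardless of which ambient space we regard $\Phi$ as living in, so there is no ambiguity in saying these events ``are'' their intersections with $\Omega_g$.

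The main (and only) obstacle is really just making sure the bookkeeping about trace $\sigma$-algebras is stated cleanly, because the notational identification of an event $A\subset\Omega_k(n,m)$ with its restriction $A\cap\Omega_g\subset\Omega_g$ is the sort of thing that is trivial but easy to phrase sloppily. I do not anticipate any genuine difficulty: there is no new probabilistic content here, only the observation that conditioning on a fixed event $\Omega_g\in\Sigma_k(n,m)$ and passing to the trace filtration cannot destroy measurability. For this reason the proof can be kept to a few lines, essentially: ``This is immediate from Fact~\ref{Fact_measurable} and the definition $\cF_{g,t}=\cbc{\cE\cap\Omega_g:\cE\in\cF_t}$, since for each event $A$ listed, $A\in\cF_t$ implies $A\cap\Omega_g\in\cF_{g,t}$.''
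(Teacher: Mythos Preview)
Your proposal is correct and matches the paper's approach exactly: the paper simply remarks that $\cF_{g,t}$ is the projection (trace) of $\cF_t$ onto $\Omega_g$ and states that Fact~\ref{Fact_measurable} therefore directly implies Fact~\ref{Fact_gmeasurable}, which is precisely your argument. The only cosmetic point is that Fact~\ref{Fact_measurable} is stated for $t\geq1$ while Fact~\ref{Fact_gmeasurable} is stated for $t\geq0$, but at $t=0$ all six events are determined by $\pi_0$ (or are deterministic), so this is harmless.
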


Moreover, since the only conditioning we impose in $\Omega_g$ concerns the literals $\PHI_{ij}$ with $(i,j)\in Q\cup g(Q)$,
Fact~\ref{Fact_iid} yields the following.

\begin{fact}\label{Fact_giid}
Let $t\geq0$.
Let $\cE_t$ be the set of all pairs $(i,j)\in\brk m\times\brk k\setminus(Q\cup g(Q))$ such that $\pi_t(i,j)\in\{-1,1\}$.
The conditional joint distribution of the variables $(|\PHI_{ij}|)_{(i,j)\in\cE_t}$ given $\cF_{t,g}$ is uniform over $(V\setminus(\cA_t\cup\cN_t))^{\cE_t}$.
That is, 
for any map $f:\cE_t\rightarrow V\setminus (\cA_t\cup\cN_t)$ we have
	$$\pr\brk{\forall (i,j)\in\cE_t:|\PHI_{ij}|=f(i,j)|\cF_{t,g}}=|V\setminus(\cA_t\cup\cN_t)|^{-|\cE_t|}.$$
\end{fact}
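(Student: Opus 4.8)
The plan is to read this off Fact~\ref{Fact_iid}, using that the only conditioning built into $\Omega_g$ concerns literals at positions in $Q\cup g(Q)$, a set that is disjoint from $\cE_t$ by the very definition of $\cE_t$. First I would set up the bookkeeping: by Fact~\ref{Fact_gmeasurable} the sets $\cA_t$ and $\cN_t$ are $\cF_{g,t}$-measurable, so it suffices to fix an atom $\cB$ of $\cF_{g,t}$ with $\pr\brk{\cB}>0$ and show that, conditioned on $\cB$, the family $(|\PHI_{ij}|)_{(i,j)\in\cE_t}$ is uniform over $(V\setminus(\cA_t\cup\cN_t))^{\cE_t}$. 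Writing $\cB=\cA\cap\Omega_g$ with $\cA$ the atom of $\cF_t$ containing $\cB$, note that $\cA_t$, $\cN_t$, and the index set $\cE_t$ all take constant values on $\cA$, and that $\cE_t=\cE_t'\setminus(Q\cup g(Q))$, where $\cE_t'=\cbc{(i,j)\in\brk m\times\brk k:\pi_t(i,j)\in\cbc{-1,1}}$.

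Next I would invoke Fact~\ref{Fact_iid} on the atom $\cA$: there the literals $\PHI_{ij}$ at positions $(i,j)$ with $\pi_t(i,j)\notin\cbc{-1,1}$ are fixed (they equal $\pi_t(i,j)$), the signs at the remaining positions are fixed (they equal $\pi_t(i,j)$), and the underlying variables $(|\PHI_{ij}|)_{(i,j)\in\cE_t'}$ are mutually independent and uniformly distributed over $V\setminus(\cA_t\cup\cN_t)$. Now $\Omega_g$ only constrains the literals $\PHI_{ij}$ with $(i,j)\in Q$ and $\PHI_{g(i,j)}$ with $(i,j)\in Q$; the sign parts of these constraints are $\cF_0$- and hence $\cF_t$-measurable, so on $\cA$ they are either automatically satisfied (adding nothing) or violated (making $\pr\brk{\cB}=0$), and the variable-identity parts constrain only coordinates of the family $(|\PHI_{ij}|)_{(i,j)\in\cE_t'}$ that lie in $Q\cup g(Q)$. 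Since the coordinates of that family are independent, conditioning on $\Omega_g$ leaves the sub-family indexed by $\cE_t$ mutually independent and uniform over $V\setminus(\cA_t\cup\cN_t)$. Hence for any $f:\cE_t\ra V\setminus(\cA_t\cup\cN_t)$ we get $\pr\brk{\forall (i,j)\in\cE_t:|\PHI_{ij}|=f(i,j)\,|\,\cF_{g,t}}=|V\setminus(\cA_t\cup\cN_t)|^{-|\cE_t|}$ on $\cB$, which is the claim.

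The only point demanding care — and it is precisely the reason $Q\cup g(Q)$ was excised from $\cE_t$ in the statement — is checking that $\Omega_g$ imposes no constraint whatsoever on the variables at positions in $\cE_t$, which here reduces to the observation that $Q$ and $g(Q)$ lie outside $\cE_t$; everything else is a verbatim transcription of the argument behind Fact~\ref{Fact_iid}. I do not expect a genuine obstacle, so this should be a short proof (indeed essentially the one-line deduction from Fact~\ref{Fact_iid} already indicated in the text).
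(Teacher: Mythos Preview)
Your proposal is correct and follows exactly the approach the paper takes: the paper simply states that ``since the only conditioning we impose in $\Omega_g$ concerns the literals $\PHI_{ij}$ with $(i,j)\in Q\cup g(Q)$, Fact~\ref{Fact_iid} yields the following,'' and your write-up is a careful unpacking of precisely that one-line deduction.
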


Similarly, with respect to the random variables $K_t^*(\cdot,\cdot)$ and $K_t^0(\cdot,\cdot)$ defined in~(\ref{eqKt*}) and (\ref{eqKt0}),
\Cor~\ref{Cor_Kt} implies the following.

\begin{corollary}\label{Cor_gK}
For any $t\geq1$, $\delta_t\geq0$ and $\cI^*,\cI^0\subset\brk m\times\brk k\setminus(Q\cup g(Q))$ we have
	\begin{eqnarray*}
	\Erw\brk{K_t^*(\cI^*)K_t^0(\cI^0)\vecone\cbc{|\cZ_t\setminus\cZ_{t-1}|\leq\delta_t}|\cF_{g,t-1}}&\leq&
		\max\cbc{1,|V\setminus(\cA_{t-1}\cup\cN_{t-1})|}^{-\abs{\cI^*}}\\
		&&\quad\cdot\bcfr{k\delta_t}{\max\cbc{1,|V\setminus(\cA_{t-1}\cup\cN_{t-1})|-k\delta_t}}^{\abs{\cI^0}}.
	\end{eqnarray*}
\end{corollary}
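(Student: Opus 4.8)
The plan is to transcribe the proof of \Cor~\ref{Cor_Kt} essentially word for word, carrying it out inside the conditioned probability space $\Omega_g$ with the filtration $(\cF_{g,t})_{t\geq0}$ in place of $(\cF_t)_{t\geq0}$. Recall that \Cor~\ref{Cor_Kt} was obtained by combining exactly three ingredients: the identity~(\ref{eqKt*Kt0}) (which says that $K_t^*(\cI^*)K_t^0(\cI^0)=0$ whenever $\cI^*,\cI^0$ are both nonempty), the bound of \Cor~\ref{Cor_Kt*} on $\Erw[K_t^*(\cI^*)\mid\cF_{t-1}]$, and the bound of \Lem~\ref{Lemma_Kt0} on $\Erw[K_t^0(\cI^0)\vecone\cbc{|\cZ_t\setminus\cZ_{t-1}|\leq\delta_t}\mid\cF_{t-1}]$. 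So it suffices to establish the three corresponding statements with $\cF_{g,t-1}$ replacing $\cF_{t-1}$, and then to combine them exactly as before.

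The first ingredient, the identity~(\ref{eqKt*Kt0}), is purely combinatorial: it uses only the fact that the first clause that step \textbf{PI2} adds to $\cZ_t$ contains the flipped variable $|\PHI_{i_tj_t}|$, so that any nontrivial increment of $\cN_t$ forces $\cA_t\subset\cA_{t-1}$. Conditioning on $\Omega_g$ changes nothing here, so~(\ref{eqKt*Kt0}) holds on $\Omega_g$ verbatim. For the remaining two ingredients one inspects the proofs of \Lem~\ref{Lemma_Kt*} and \Lem~\ref{Lemma_Kt0}: the randomness of $\PHI$ enters them at precisely one point, namely through Fact~\ref{Fact_iid}, invoked to conclude that the underlying variables $|\PHI_{ij}|$ at positions $(i,j)$ with $\pi_{t-1}(i,j)=\pm1$ are conditionally independent and uniform over $V\setminus(\cA_{t-1}\cup\cN_{t-1})$. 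Fact~\ref{Fact_giid} supplies the identical conclusion inside $\Omega_g$, the only difference being that the positions must avoid $Q\cup g(Q)$. Since the hypothesis of \Cor~\ref{Cor_gK} restricts $\cI^*,\cI^0$ to $\brk m\times\brk k\setminus(Q\cup g(Q))$, every position that $K_t^*(\cI^*)$ or $K_t^0(\cI^0)$ refers to is admissible for Fact~\ref{Fact_giid}; the measurability facts needed along the way (e.g.\ that $\PHI_{i_t}$ is $\cA_{t-1}\cup\cN_{t-1}$-negative, that $\cbc{|\cZ_t\setminus\cZ_{t-1}|\leq\delta_t}$ is $\cF_{g,t}$-measurable, and so on) are provided by Fact~\ref{Fact_gmeasurable}. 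Running the trace of step \textbf{PI2} exactly as in \Lem~\ref{Lemma_Kt0}---tracking the sequence $\xi_0,\ldots,\xi_L$ of newly revealed variables, $L+1\leq k\delta_t$, and bounding the conditional probabilities of the events $\cE_l^0(f)$ via Fact~\ref{Fact_giid}---then reproduces~(\ref{eqKt9}) and~(\ref{eqKt10}) with $\cF_{g,t-1}$ in place of $\cF_{t-1}$, and similarly for the $K_t^*$-bound. Combining the three pieces as in \Cor~\ref{Cor_Kt} gives the asserted inequality.

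The one point that genuinely needs checking---and which I would verify first---is that the excluded positions $Q\cup g(Q)$ never interfere with the estimates. Step \textbf{PI2} does reveal occurrences of variables everywhere in the formula, including at positions in $Q\cup g(Q)$, and the revealed variables $\xi_0,\ldots,\xi_L$ may well coincide with the variables sitting at those positions; but the union bound and the telescoping product in \Lem~\ref{Lemma_Kt0} only ever concern positions in $\cI^*\cup\cI^0$, which by hypothesis lie outside $Q\cup g(Q)$, so Fact~\ref{Fact_giid} is applicable exactly where it is used. Once this non-interference is observed, no further work is required: the argument is a line-by-line copy of the unconditioned one, which is why the corollary is ``immediate''.
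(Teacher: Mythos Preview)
Your proposal is correct and takes the same approach as the paper, which simply remarks that \Cor~\ref{Cor_Kt} implies \Cor~\ref{Cor_gK} without spelling out any details. Your write-up in fact supplies more justification than the paper does: in particular, your final paragraph isolating the one genuine point to check---that all positions at which Fact~\ref{Fact_iid} is actually invoked lie in $\cI^*\cup\cI^0\subset\brk m\times\brk k\setminus(Q\cup g(Q))$, so that Fact~\ref{Fact_giid} applies there---is exactly the right observation, and the paper leaves it implicit.
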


As a further preparation, we need the following lemma.

\begin{lemma}\label{Lemma_complicated}
Let $1\leq t\leq T^*$.
Set $\mu=\eps n/4$ and let $M\subset\brk m$ be a set of size $|M|=\mu$.
Furthermore, let $Q\subset M\times\brk k$,
let $I\subset\brk m$ be a set of size $|I|\leq|Q|$, 
and let $g:Q\ra I\times\brk k$.
Let $\cE(M,Q,I,g)$ denote the event that $|\cZ_{t}|\leq\eps n$
and the following three statements hold.
\begin{enumerate}
\item[a.] For all $(i,j)\in Q$ we have $\sign(\PHI_{ij})=-1$, $\sign(\PHI_{g(i,j)})=1$, and $\PHI_{g(i,j)}=|\PHI_{ij}|$. 
\item[b.] $I\subset\cD_t$.
\item[c.] For each $i\in I$ there is $j\in\brk k$ such that $(i,j)\in g(Q)$.
\end{enumerate}
Then
	$\pr\brk{\cE(M,Q,I,g)}\leq	2\bink{T^*+\eps n}{\eps n}(2n)^{-|Q|}2^{-|I\times\brk k|}
			\exp(1.011k\theta|I|).$
\end{lemma}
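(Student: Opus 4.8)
The plan is to run the first-moment/union-bound argument of the proof of Proposition~\ref{Prop_DD}, but carried out inside the conditioned probability space $\Omega_g$ defined just above the statement and using the conditional corollary~\ref{Cor_gK} in place of Corollary~\ref{Cor_Kt}.

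First I would note that condition~a.\ of the lemma is exactly the event $\Omega_g$, while condition~c.\ is a property of $g$ alone, so that $\cE(M,Q,I,g)\subseteq\Omega_g\cap\{|\cZ_t|\le\eps n\}\cap\{I\subseteq\cD_t\}$. By~(\ref{eqOmegag}) this gives $\pr\brk{\cE(M,Q,I,g)}=\pr\brk{\Omega_g}\cdot p\le 2^{-|Q\cup g(Q)|}n^{-|Q|}\cdot p$, where $p:=\pr\brk{\{|\cZ_t|\le\eps n\}\cap\{I\subseteq\cD_t\}\mid\Omega_g}$. We may assume $Q\cap g(Q)=\emptyset$ (otherwise $\Omega_g=\emptyset$, since a position cannot carry both signs), and then, setting $a_i=|\{j\in\brk k:(i,j)\in g(Q)\}|\ge1$ for $i\in I$ (by~c.), we have $2^{-|Q\cup g(Q)|}n^{-|Q|}=(2n)^{-|Q|}\prod_{i\in I}2^{-a_i}$. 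So it suffices to prove $p\le 2\bink{T^*+\eps n}{\eps n+1}\prod_{i\in I}2^{-(k-a_i)}$.

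To bound $p$, I would decompose inside the filtration $(\cF_{g,s})_{s\ge0}$ exactly as in Proposition~\ref{Prop_DD}. First, over the increment vector $\vec\delta=(\delta_s)_{1\le s\le t}$ with $\delta_s\ge|\cZ_s\setminus\cZ_{s-1}|$ and $\sum_s\delta_s\le\eps n$: there are at most $\bink{\eps n+t}{\eps n+1}\le\bink{T^*+\eps n}{\eps n+1}$ of these. Secondly, for each $i\in I$, since $\PHI_i$ must be $\cA_t\cup\cN_t$-negative I would record which of the $k-a_i$ positions of $\PHI_i$ outside $g(Q)$ carry a positive literal, and for each such position the time $s$ and the set ($\cA_s$ or $\cN_s$) that witnesses $|\PHI_{ij}|\in\cA_s\cup\cN_s$. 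Every atom of this decomposition forces a product of events $K^*_s(i,j)=1$, $K^0_s(i,j)=1$, $\pi_0(i,j)=-1$ and $\{|\cZ_s\setminus\cZ_{s-1}|\le\delta_s\}$ in which all index-pairs $(i,j)$ lie outside $Q\cup g(Q)$, which is precisely the situation covered by Corollary~\ref{Cor_gK}. Feeding this into Lemma~\ref{Lemma_filt}, using $|V\setminus(\cA_{s-1}\cup\cN_{s-1})|\ge n/1.01$ (as $|\cA_{s-1}|\le T^*$ and $|\cN_{s-1}|\le k\eps n$), and then summing the time/set choices as in~(\ref{eqDD5}), one gets a bound for $p$ of the shape $\bink{T^*+\eps n}{\eps n+1}\prod_{i\in I}2^{-(k-a_i)}(1+O(\theta)+O(\eps k))^{k-a_i}$. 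Since $\theta=\tfrac1{3k}$, $\eps=\exp(-k^{2/3})$, $a_i\ge1$ and $k\ge k_0$ is large, the residual factors $(1+O(\theta)+O(\eps k))^{k-a_i}$, together with the replacement of $\bink{\eps n+t}{\eps n+1}$ by $\bink{T^*+\eps n}{\eps n+1}$, are absorbed into the overall factor $2$; combining with the previous paragraph yields the claim.

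The step I expect to be the main obstacle is the bookkeeping around the positions lying in $g(Q)$. Those positions carry positive literals whose underlying variables are pinned down by $\Omega_g$ (to $|\PHI_{g^{-1}(i,j)}|$), so Corollary~\ref{Cor_gK} --- which demands that every index-pair avoid $Q\cup g(Q)$ --- cannot be applied to them. One has to organise the decomposition so that these $a_i$ positions of each clause $\PHI_i$ with $i\in I$ are paid for solely through the factor $\pr\brk{\Omega_g}$ (their positive signs supply the product $\prod_i2^{-a_i}$, and the variable coincidences with the matching elements of $Q$ supply $n^{-|Q|}$), and so that the further requirement $I\subseteq\cD_t$ places on them --- namely $|\PHI_{ij}|\in\cA_t\cup\cN_t$ for these positions --- is invoked only as a trivial upper bound and therefore does not disturb the count. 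Ensuring that the resulting per-clause contribution is genuinely at most $2^{-k}$ (rather than $C^{|I|}2^{-k|I|}$ with some $C>1$) is the subtle point, and this is where the smallness of $\theta$ and $\eps$, the lower bound $a_i\ge1$ from condition~c., and the room in the binomial coefficient all get used.
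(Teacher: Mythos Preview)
Your approach is essentially identical to the paper's: condition on $\Omega_g$, decompose the event $\{I\subset\cD_t,\ |\cZ_t|\le\eps n\}$ over the increment profile $\vec\delta$ and over the positions in $I\times\brk k\setminus g(Q)$ that carry a positive sign (together with the time and the set $\cA$ vs.\ $\cN$ witnessing $\cA_t\cup\cN_t$-negativity), apply Corollary~\ref{Cor_gK} through Lemma~\ref{Lemma_filt}, and sum. Your diagnosis of the delicate point --- that the positions in $g(Q)$ must be paid for entirely through $\pr[\Omega_g]$ and then ignored when invoking Corollary~\ref{Cor_gK} --- is exactly how the paper proceeds, and your per-clause bookkeeping with the $a_i$'s makes the combination $2^{-|g(Q)|}\cdot 2^{-|I\times[k]\setminus g(Q)|}=2^{-k|I|}$ explicit where the paper leaves it implicit.
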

\begin{proof}
To estimate $\pr\brk{\cE(M,Q,I,g)}$, we need to decompose the event $\cE(M,Q,I,g)$ into
`more detailed' sub-events whose probabilities can be bounded directly via \Cor~\ref{Cor_gK}.
To this end, let $\cI^*,\cI^0$ be two disjoint subsets of $I\times\brk k\setminus g(Q)$, and let
$t^*:\cI^*\ra\brk{T^*}$, $t^0:\cI^0\ra\brk{T^*}$ be two maps.
Let $\cE(M,Q,I,g,t^*,t^0)$ be the event that $|\cZ_{t}|\leq\eps n$ and that the following statements are true.
\begin{enumerate}
\item[a.] For all $(i,j)\in Q$ we have $\sign(\PHI_{ij})=-1$, $\sign(\PHI_{g(i,j)})=1$, and $\PHI_{g(i,j)}=|\PHI_{ij}|$. 
\item[b.]
	\begin{enumerate}
	\item[\em i.] If $(i,j)\in I\times\brk k\setminus(g(Q)\cup\cI^*\cup\cI^0)$, then $\sign(\PHI_{ij})=-1$.
	\item[\em ii.] If $(i,j)\in\cI^*$, then $\sign(\PHI_{ij})=\pi_{t^*(i,j)-1}(i,j)=1$ and $\PHI_{ij}\in\cA_{t^*(i,j)}$.
	\item[\em iii.] If $(i,j)\in\cI^0$, then $\sign(\PHI_{ij})=\pi_{t^0(i,j)-1}(i,j)=1$ and $\PHI_{ij}\in\cN_{t^0(i,j)}$.
	\end{enumerate}
\item[c.] For each $i\in I$ there is $j\in\brk k$ such that $(i,j)\in g(Q)$.
\end{enumerate}
If the event $\cE(M,Q,I,g)$ occurs, then there exist $\cI^*,\cI^0,t^*,t^0$ such that the event $\cE(M,Q,I,g,t^*,t^0)$ occurs.
Indeed, the definition of the set $\cD_{T^*}$ is such that if $i\in\cD_{T^*}$, then for any $(i,j)\in I\times\brk k$ such that
$\sign(\PHI_{ij})=1$ we have $\PHI_{ij}\in\cA_{T^*}\cup\cN_{T^*}$.
Thus, by the union bound,
	\begin{eqnarray}\label{eqactive4}
	\pr\brk{\cE(M,Q,I,g)}&\leq&\sum_{\cI^*,\cI^0}\sum_{t^*,t^0}\pr\brk{\cE(M,Q,I,g,t^*,t^0)}.
	\end{eqnarray}

Furthermore, let $\vec\delta=(\delta_1,\ldots,\delta_{t})$ be a sequence such that $\sum_{s=1}^{t}\delta_s\leq\eps n$.
Let $\cE(\vec\delta,M,Q,I,g,t^*,t^0)$ be the event that $\abs{\cZ_s\setminus\cZ_{s-1}}\leq\delta_s$ for all $1\leq s<t$
and that $\cE(M,Q,I,g,t^*,t^0)$ occurs.
Then by the union bound,	
	\begin{eqnarray}\nonumber
	\pr\brk{\cE(M,Q,I,g,t^*,t^0)}&\leq&\sum_{\vec\delta}\pr\brk{\cE(\vec\delta,M,Q,I,g,t^*,t^0)}\\
			&\leq&\bink{T^*+\eps n}{\eps n}\max_{\vec\delta}\pr\brk{\cE(\vec\delta,M,Q,I,g,t^*,t^0)}.
				\label{eqactive5}
	\end{eqnarray}
	
The event $\cE(\vec\delta,M,Q,I,g,t^*,t^0)$ is sufficiently specific so that we can estimate its probability easily.
Namely, if $\cE(\vec\delta,M,Q,I,g,t^*,t^0)$ occurs, then $\Omega_g$ occurs and
	\begin{equation}\label{eqcomplicated1}
	\prod_{(i,j)\in\cI^*}K^*_{t^*(i,j)}(i,j)\prod_{(i,j)\in\cI^0}K^0_{t^*(i,j)}(i,j)\prod_{s=1}^{t}\vecone\cbc{\abs{\cZ_{s}\setminus\cZ_{s-1}}\leq\delta_s}=1.
	\end{equation}
To bound the probability that~(\ref{eqcomplicated1}) occurs, we reorder the product by the time parameter.
That is, letting $\cI^*_s=t^{*\,-1}(s)$, $\cI^0_s=t^{0\,-1}(s)$, we get
	\begin{eqnarray}\nonumber
	\pr\brk{\cE(\vec\delta,M,Q,I,g,t^*,t^0)|\cF_{g,0}}\\
		&\hspace{-6cm}\leq&\hspace{-3cm}\;
			\Erw\brk{\prod_{(i,j)\in\cI^*}K_{t^*(i,j)}(i,j)\prod_{(i,j)\in\cI^0}K_{t^*(i,j)}(i,j)\prod_{s=1}^{t}\vecone\cbc{\abs{\cZ_{s}\setminus\cZ_{s-1}}\leq\delta_s}=1|\cF_{g,0}}
				\nonumber\\
		&\hspace{-6cm}\leq&\hspace{-3cm}\;
			\Erw\brk{\prod_{s=1}^tK_s^*(\cI^*_s)K_s^0(\cI^0_s)\vecone\cbc{\abs{\cZ_{s}\setminus\cZ_{s-1}}\leq\delta_s}|\cF_{g,0}}.
								\label{eqactive6}
	\end{eqnarray}
Since for any $s\leq t\leq T^*$ we have $\abs{\cA_s}\leq s\leq T^*\leq\frac{n}{k}$, and as $\abs{\cN_s}\leq k\sum_{q=1}^s\delta_s\leq k\eps n$,
we see that $\abs{\cA_s\cup\cN_s}+k\delta_s\leq 0.001n$ for all $s\leq t$.
Hence, (\ref{eqactive6}) and~\Cor~\ref{Cor_gK} yield
	\begin{eqnarray}\nonumber
	\pr\brk{\cE(\vec\delta,M,Q,I,g,t^*,t^0)|\cF_{g,0}}&\leq&\prod_{s=1}^t\bcfr{1.01}{n}^{|\cI^*_s|}\bcfr{1.01k\delta_s}{n}^{|\cI^0_s|}\\
		&\leq&\bcfr{1.01}{n}^{|\cI^*|+|\cI^0|}\prod_{(i,j)\in\cI^0}k\delta_{t^0(i,j)}.
			\label{eqactive7}
	\end{eqnarray}
Furthermore, if the event $\cE(\vec\delta,M,Q,I,g,t^*,t^0)$ occurs, then for all $(i,j)\in I\times\brk k\setminus(g(Q)\cup\cI^*\cup\cI^0)$
we have $\sign(\PHI_{ij})=-1$, while $\sign(\PHI_{ij})=1$ for all $(i,j)\in\cI^*\cup\cI^0$.
This event is $\cF_{0,g}$-measurable.
Hence, as the signs of the literals $\PHI_{ij}$ are independently uniformly distributed, we obtain from~(\ref{eqactive7})
	\begin{eqnarray}
	\pr\brk{\cE(\vec\delta,M,Q,I,g,t^*,t^0)|\Omega_g}&\leq&
			2^{-|I\times\brk k\setminus g(Q)|}\bcfr{1.01}{n}^{|\cI^*\cup\cI^0|}\prod_{(i,j)\in\cI^0}k\delta_{t^0(i,j)}.
							\label{eqactive8}
	\end{eqnarray}
Combining~(\ref{eqOmegag}) and~(\ref{eqactive8}), we get
	\begin{eqnarray}\nonumber
	\pr\brk{\cE(\vec\delta,M,Q,I,g,t^*,t^0)}&=&
		\pr\brk{\Omega_g}\pr\brk{\cE(\vec\delta,M,Q,I,g,t^*,t^0)|\Omega_g}\\
		&\leq&(2n)^{-|Q|}
			2^{-|I\times\brk k|}\bcfr{1.01}{n}^{|\cI^*\cup\cI^0|}\prod_{(i,j)\in\cI^0}k\delta_{t^0(i,j)}.
							\label{eqactive9}
	\end{eqnarray}

As~(\ref{eqactive4}) and~(\ref{eqactive5}) show, in order to obtain $\pr\brk{\cE(M,Q,I,g)}$, we need to sum~(\ref{eqactive9}) over all possible choices of
$\vec\delta,\cI^*,\cI^0,t^*,t^0$:
	\begin{eqnarray*}
	\pr\brk{\cE(M,Q,I,g)}&\leq&
			\bink{T^*+\eps n}{\eps n}(2n)^{-|Q|}\sum_{\cI^*,\cI^0}
				\sum_{t^*:\cI^*\ra\brk t}\sum_{t^0:\cI^0\ra\brk t}\pr\brk{\cE(\vec\delta,M,Q,I,g,t^*,t^0)}\\
	&\leq&\bink{T^*+\eps n}{\eps n}(2n)^{-|Q|}
		2^{-|I\times\brk k|}
		\sum_{\cI^*,\cI^0}\bcfr{1.01}{n}^{|\cI^*\cup\cI^0|}t^{|\cI^*|}\sum_{t^0:\cI^0\ra\brk t}
			\prod_{(i,j)\in\cI^0}k\delta_{t^0(i,j)}\\
	&\leq&\bink{T^*+\eps n}{\eps n}(2n)^{-|Q|}
		2^{-|I\times\brk k|}
			\sum_{\cI^*,\cI^0}\bcfr{1.01}{n}^{|\cI^*\cup\cI^0|}t^{|\cI^*|}\bc{\sum_{s=1}^tk\delta_s}^{|\cI^0|}\\
	&\leq&\bink{T^*+\eps n}{\eps n}(2n)^{-|Q|}2^{-|I\times\brk k|}
			\sum_{\cI^*,\cI^0}
				\bc{1.01 t/n}^{|\cI^*|}\bc{1.01k\eps}^{|\cI^0|}\quad[\mbox{as $\sum_{s=1}^t\delta_s\leq\eps n$}]\\
	&\leq&\bink{T^*+\eps n}{\eps n}(2n)^{-|Q|}2^{-|I\times\brk k|}
		(1+1.01(\theta+k\eps))^{k |I|}\\
	\end{eqnarray*}
Hence,
	\begin{eqnarray*}\label{eqactive10}
	\pr\brk{\cE(M,Q,I,g)}&\leq&2\bink{T^*+\eps n}{\eps n}(2n)^{-|Q|}2^{-|I\times\brk k|}
			\exp(1.011k|I|\theta).
	\end{eqnarray*}
	as desired.
\qed\end{proof}

\noindent\emph{Proof of \Lem~\ref{Lemma_active}.}
Let $\mu=\eps n/4$ and fix some $1\leq t\leq T^*$.
Let $\cE$ be the event that $|\cZ_{t}|\leq\eps n$ and $A_t\geq\mu$.
For a set $M\subset\brk m$ of size $|M|=\mu$ we let $\cE(M)$ signify the event that all clauses $i\in M$ are $t$-active.
If $\cE$ occurs, then there is a set $M$ of size $\mu$ such that $\cE(M)$ occurs.
Hence, by the union bound
	\begin{eqnarray}\label{eqactive1}
	\pr\brk{\cE}&\leq&\sum_{M\subset\brk m:|M|=\mu}\pr\brk{\cE(M)}\leq\bink{m}\mu\max_{M}\pr\brk{\cE(M)}.
	\end{eqnarray}
	
To bound the expression on the r.h.s., fix some set $M\subset\brk m$ of size $\mu$.
Let $\cQ(M)$ be the set of all $Q\subset M\times\brk k$ such that for each $i\in M$ we have
	$|\cbc{j\in\brk k:(i,j)\in Q}|= k_2-1$.
For a set $Q\in\cQ(M)$ let $\cE(M,Q)$ be the event that $|\cZ_{t}|\leq\eps n$ and
\begin{enumerate}
\item[a.] all pairs $(i,j)\in Q$ are $s(i,j)$-active for some $s(i,j)\leq t$, and
\item[b.] for each $i\in M$ there is $j'\in\brk k$ such that $(i,j')$ is $s$-active at some time $s$ satisfying
	 $$\max_{j:(i,j)\in Q}s(i,j)<s\leq t.$$
\end{enumerate}
If the event $\cE(M)$ occurs, then there exists $Q\in\cQ(M)$ such that $\cE(M,Q)$ occurs.
(In fact, if $\cE(M)$ occurs, then by the definition of $t$-active, for any $i\in M$ there are at least $k_2$ indices $j$ such that
$(i,j)$ is $s$-active for some $s\leq t$.
We can thus let $Q$ contain the pairs $(i,j)$ for the `earliest' $k_2-1$ such indices $j$.)
Hence, by the union bound
	\begin{eqnarray}\label{eqactive2}
	\pr\brk{\cE(M)}&\leq&\sum_{Q\in\cQ}\pr\brk{\cE(M,Q)}\leq\bink{k}{k_2-1}^{\mu}\max_{Q\in\cQ}\pr\brk{\cE(M,Q)}.
	\end{eqnarray}

Now, fix a set $M\subset\brk m$, $|M|=\mu$, and a set $Q\in\cQ(M)$.
If the event $\cE(M,Q)$ occurs, then there exist $I,g$ such that the event $\cE(M,Q,I,g)$ as in \Lem~\ref{Lemma_complicated} occurs.
Indeed, this is precisely what we pointed out in {\bf A1}, {\bf A2} above.
Thus, by the union bound 
	\begin{eqnarray}\nonumber
	\pr\brk{\cE(M,Q)}&\leq&\sum_{I,g}\pr\brk{\cE(M,Q,I,g)}
		\leq\sum_{\nu=1}^{(k_2-1)\mu}\sum_{I\subset\brk m:|I|=\nu}\sum_{g:Q\ra I\times\brk k}\pr\brk{\cE(M,Q,I,g)}\nonumber\\
		&\leq&\sum_{\nu=1}^{(k_2-1)\mu}\bink{m}{\nu}(k\nu)^{(k_2-1)\mu}\max_{I,g:|I|=\nu,g:Q\ra I\times\brk k}\pr\brk{\cE(M,Q,I,g)}.
			\label{eqactive3}
	\end{eqnarray}
According to \Lem~\ref{Lemma_complicated},
	\begin{eqnarray}\label{eqactive10}
	\pr\brk{\cE(M,Q,I,g)}&\leq&2\bink{T^*+\eps n}{\eps n}(2n)^{-|Q|}2^{-|I\times\brk k|}\exp(1.011k\theta\nu).
	\end{eqnarray}
Combining~(\ref{eqactive3}) and~(\ref{eqactive10}), we obtain
	\begin{eqnarray*}
	\pr\brk{\cE(M,Q)}&\leq&2\bink{T^*+\eps n}{\eps n}(2n)^{-|Q|}
			\sum_{\nu=1}^{(k_2-1)\mu}\bink{m}\nu(k\nu)^{(k_2-1)\mu}2^{-k\nu}\exp(1.011k\theta\nu)\\
		&\leq&2\bink{T^*+\eps n}{\eps n}(2n)^{-|Q|}\sum_{\nu=1}^{(k_2-1)\mu}\bcfr{\eul m}{\nu2^k}^\nu(k\nu)^{(k_2-1)\mu}
			\exp(1.011k\theta\nu)\\
		&\leq&2\bink{T^*+\eps n}{\eps n}(2n)^{-|Q|}\sum_{\nu=1}^{(k_2-1)\mu}\bcfr{\eul \rho n}{k\nu}^\nu(k\nu)^{(k_2-1)\mu}\exp(1.011k\theta\nu).
	\end{eqnarray*}
Since the largest summand is the one with $\nu=(k_2-1)\mu$ and as $|Q|=(k_2-1)\mu$, we obtain
	\begin{eqnarray}\label{eqactive11}
	\pr\brk{\cE(M,Q)}&\leq&2k\mu\bink{T^*+\eps n}{\eps n}
			\bcfr{\exp(1+1.011k\theta)\rho}{2}^{(k_2-1)\mu}
	\end{eqnarray}

Let $\xi>0$ be such that $\bink{k}{k_2-1}=(2\xi)^{k_2-1}$ and let $\zeta=\exp(1+1.011k\theta)$.
Plugging~(\ref{eqactive11}) into~(\ref{eqactive2}), we get
	\begin{eqnarray}\label{eqactive12}
	\pr\brk{\cE(M)}&\leq&2k\mu\bink{T^*+\eps n}{\eps n}\bink{k}{k_2-1}^\mu
			\bcfr{\zeta\rho}{2}^{(k_2-1)\mu}
			\leq2k\mu\bink{T^*+\eps n}{\eps n}\bc{\xi\zeta\rho}^{(k_2-1)\mu}.
	\end{eqnarray}
Finally, (\ref{eqactive1}) and~(\ref{eqactive12}) yield
	\begin{eqnarray}\nonumber
	\pr\brk{\cE}&\leq&2k\mu\bink{T^*+\eps n}{\eps n}\bink m\mu
		\bc{\zeta\xi\rho}^{(k_2-1)\mu}
		\leq2k\mu\bink{T^*+\eps n}{\eps n}\brk{\frac{\eul m}\mu\bc{\zeta\xi\rho}^{(k_2-1)}}^\mu\nonumber\\
		&\leq&2k\mu\bink{T^*+\eps n}{\eps n}\brk{\frac{4\eul 2^k\rho}{k\eps}\bc{\zeta\xi\rho}^{(k_2-1)}}^\mu.
			\label{eqactive13}
	\end{eqnarray}
If $\rho\leq\rho_0=1/25$, then 
	\begin{equation}\label{eqactive13a}
	\frac{4\eul 2^k\rho}{k\eps}\bc{\zeta\xi\rho}^{(k_2-1)}<\exp(-k_2/100)
	\end{equation}
		for $k\geq k_0$ large enough.
Hence, (\ref{eqactive13}) and~(\ref{eqactive13a}) yield for $k\geq k_0$ large enough
	\begin{eqnarray*}\nonumber
	\pr\brk{\cE}
		&\leq&2k\mu\bink{T^*+\eps n}{\eps n}\exp(-k_2\mu/100)
				\leq2k\mu\bcfr{\eul(T^*+\eps n)}{\eps n}^{\eps n+1}\exp(-k_2\mu/100)\\
		&\leq&2k\mu\bcfr{\eul\bc{1/k+\eps}}\eps^{\eps n+1}\exp(-k_2\mu/100)\\
		&\leq&\exp\brk{2\eps n-\eps n\ln\eps-k_2\mu/100+o(n)}\\
		&\leq&\exp\brk{n\bc{2\eps-\eps\ln\eps-k_2\eps/400+o(1)}}\quad\qquad\mbox{[by our choice of $\mu$]}\\
		&\leq&\exp\brk{-n k_2\eps/401}=o(1),\qquad\qquad\ \qquad\qquad\mbox{[by our choice of $\eps$, cf.\ (\ref{eqlambdaeps})]},
	\end{eqnarray*}
as desired.
\qed

\end{document}